\pgfplotsset{compat=1.18}
\theoremstyle{plain} 
\newtheorem{theorem}{Theorem}[section] 
\newtheorem{lemma}[theorem]{Lemma} 
\newtheorem{corollary}[theorem]{Corollary}
\numberwithin{equation}{section} 
\DeclareMathOperator{\mre}{Re} 
\DeclareMathOperator{\mim}{Im} 
\DeclareMathOperator{\dist}{dist} 
\DeclareMathOperator{\supp}{supp} 
\DeclareMathOperator*{\esssup}{\operatorname{ess\,sup}}
\begin{document} 
\title{Almost periodicity and boundary values of Dirichlet series} 
\date{\today}

\author{Ole Fredrik Brevig} 
\address{Department of Mathematics, University of Oslo, 0851 Oslo, Norway} 
\email{obrevig@math.uio.no}

\author{Athanasios Kouroupis}
\address{Department of mathematics, KU Leuven, Celestijnenlaan 200B, 3001, Leuven, Belgium}
\email{athanasios.kouroupis@kuleuven.be}

\author{Karl-Mikael Perfekt} 
\address{Department of Mathematical Sciences, Norwegian University of Science and Technology (NTNU), 7491 Trondheim, Norway} 
\email{karl-mikael.perfekt@ntnu.no}
\begin{abstract}
	We employ almost periodicity to establish analogues of the Hardy--Stein identity and the Littlewood--Paley formula for Hardy spaces of Dirichlet series. A construction of Saksman and Seip shows that the limits in this Littlewood--Paley formula cannot be interchanged. We apply this construction to show that the limits in the definition of the mean counting function for Dirichlet series cannot be interchanged. These are essentially statements about the two different kinds of boundary values that we associate with Dirichlet series that converge to a bounded analytic function in a half-plane. The treatment of the mean counting function also involves an investigation of the zero sets and Blaschke products of such Dirichlet series. 
\end{abstract}

\subjclass{Primary 30B50. Secondary 30H10, 42A75.}

\maketitle

\section{Introduction} Let $\mathscr{H}^\infty$ denote the collection of all bounded analytic functions in the right half-plane that can be represented as a convergent Dirichlet series 
\begin{equation}\label{eq:diriseri} 
	f(s) = \sum_{n=1}^\infty a_n n^{-s} 
\end{equation}
in the half-plane $\mathbb{C}_\kappa = \{s=\sigma+it\,:\, \sigma>\kappa\}$ for \emph{some} $\kappa>0$ and equip $\mathscr{H}^\infty$ with the supremum norm. A celebrated theorem of Harald Bohr~\cites{Bohr1913A,BrK2024} asserts that if $f$ is in $\mathscr{H}^\infty$, then the Dirichlet series \eqref{eq:diriseri} converges uniformly to $f$ in $\mathbb{C}_\kappa$ for \emph{every} $\kappa>0$. In particular, this means that $f$ is almost periodic in $\mathbb{C}_\kappa$ for every $\kappa>0$.

A basic application of the almost periodicity is that if $f$ is in $\mathscr{H}^\infty$, then the mean values 
\begin{equation}\label{eq:Mpmean} 
	M_p^p(f,\sigma_0) = \lim_{T\to\infty} \frac{1}{2T}\int_{-T}^T |f(\sigma_0+it)|^p \,dt 
\end{equation}
exist for every $1 \leq p<\infty$ and every $\sigma_0>0$. In analogy with the corresponding quantities for Hardy spaces in the unit disc, we set 
\begin{equation}\label{eq:Hpnorm} 
	\|f\|_{\mathscr{H}^p} = \lim_{\sigma_0 \to 0^+} M_p(f,\sigma_0). 
\end{equation}
The first main results of the present paper are analogues of the Hardy--Stein identity and the Littlewood--Paley formula in our setting. 
\begin{theorem}[Hardy--Stein identity] \label{thm:HS} 
	Fix $1 \leq p<\infty$. If $f$ is in $\mathscr{H}^\infty$, then the function $\kappa \mapsto M_p^p(f,\kappa)$ is continuously differentiable on $(0,\infty)$ and
	\[\frac{\partial}{\partial \kappa} M_p^p(f,\kappa) = -\lim_{T\to\infty} \frac{p^2}{2T} \int_\kappa^\infty \int_{-T}^T |f(s)|^{p-2} |f'(s)|^2 \,dtd\sigma.\]
	The limit converges uniformly on $(\kappa_0,\infty)$ for every fixed $\kappa_0>0$. 
\end{theorem}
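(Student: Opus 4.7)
My plan is to derive the formula from Green's identity applied to $|f|^p$ on the rectangle $R_{L,T} = (\kappa, L) \times (-T, T)$, then pass to the limits $L \to \infty$ and $T \to \infty$. Since $|f|^p$ is only $C^2$ away from the (discrete) zero set of $f$ when $p<2$, I would first work with the smooth approximation $u_\varepsilon = (|f|^2 + \varepsilon^2)^{p/2}$, for which a direct computation gives
\[
\Delta u_\varepsilon = p^2 |f'|^2 (|f|^2 + \varepsilon^2)^{p/2-1} + p(2-p)\,\varepsilon^2\, |f'|^2 (|f|^2+\varepsilon^2)^{p/2-2}.
\]
As $\varepsilon \downarrow 0$, the right-hand side tends to $p^2 |f|^{p-2}|f'|^2$ in $L^1_{\mathrm{loc}}$; equivalently, $\Delta |f|^p = p^2 |f|^{p-2}|f'|^2$ holds distributionally for every $p \geq 1$, since the right-hand side is locally integrable (near a simple zero $s_0$ it behaves like $|s-s_0|^{p-2}$).

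Applying the divergence theorem to $u_\varepsilon$ on $R_{L,T}$ and letting first $\varepsilon \to 0$ and then $L \to \infty$---the latter legitimate because $f(\sigma+it)\to a_1$ and $f'(\sigma+it)\to 0$ uniformly in $t$ as $\sigma \to \infty$, by Bohr's theorem---yields
\[
\int_{-T}^T \frac{\partial}{\partial \sigma}|f(\kappa+it)|^p\, dt = -p^2 \int_\kappa^\infty\!\!\int_{-T}^T |f|^{p-2}|f'|^2\, dt\, d\sigma + \int_\kappa^\infty \bigl(g_t(\sigma,T)-g_t(\sigma,-T)\bigr)\, d\sigma,
\]
where $g_t = \partial_t |f|^p = -p|f|^{p-2}\mim(\bar f f')$. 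Dividing by $2T$, the left-hand side is $\partial_\sigma I(\sigma,T)\big|_{\sigma=\kappa}$ for the pre-limit mean $I(\sigma,T)=(2T)^{-1}\int_{-T}^T |f(\sigma+it)|^p\, dt$.

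Taking $T \to \infty$, the horizontal boundary term $(2T)^{-1}\int_\kappa^\infty g_t(\sigma,\pm T)\, d\sigma$ is $O(1/T)$ uniformly in $\kappa \geq \kappa_0$: since $|g_t|\leq p\|f\|_\infty^{p-1}|f'|$, the task reduces to bounding $\int_{\kappa_0}^\infty |f'(\sigma\pm iT)|\, d\sigma$ independently of $T$, which follows from a uniform bound on $|f'|$ on the strip $\kappa_0 \leq \sigma \leq 1$ together with $\int_1^\infty |f'(\sigma+iT)|\, d\sigma \leq \sum_n |a_n|/n < \infty$ (finite because $\sigma_a(f) \leq 1/2$ by Bohr). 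Meanwhile, the integrand $p|f|^{p-2}\mre(\bar f f')$ of $\partial_\sigma I(\sigma,T)$ is bounded on $\mathbb{C}_{\kappa_0}$ (using $|f|^{p-2}|\mre(\bar f f')| \leq |f|^{p-1}|f'|$) and almost periodic in $t$ uniformly in $\sigma$ on each strip, so its $t$-mean converges uniformly in $\sigma$, giving $\partial_\sigma I(\sigma,T) \to \partial_\sigma M_p^p(f,\sigma)$ uniformly on $(\kappa_0, \infty)$. Together this produces both the stated identity and the continuous differentiability of $\kappa \mapsto M_p^p(f,\kappa)$, with the right-hand side exhibited as a uniform limit of continuous functions of $\kappa$. \emph{The main obstacle}, I expect, is the simultaneous control of the four limits (in $\varepsilon$, $L$, $T$, and $\kappa$)---in particular, verifying that the $T$-limit of $\partial_\sigma I(\sigma,T)$ coincides with the $\sigma$-derivative of its $T$-limit, uniformly on $(\kappa_0, \infty)$---which is where the almost-periodicity framework of the paper should prove essential.
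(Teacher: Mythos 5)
Your overall strategy is the same as the paper's (Green's theorem on a rectangle, a regularization of $|f|^p$ to deal with the zeros, exponential decay as $\sigma\to\infty$ for the far boundary, and almost periodicity to produce the $t$-means), and several of your steps are fine: the computation of $\Delta u_\varepsilon$, the $\varepsilon\to 0$ limit at fixed $T$ (where only finitely many zeros matter), the $L\to\infty$ limit, and the $O(1/T)$ bound for the horizontal boundary terms via $\sum_{n\ge 2}|a_n|/n<\infty$. Reordering the limits so that $\varepsilon\to0$ is taken before $T\to\infty$ is a genuine simplification of the smoothing step, since it avoids the paper's need for uniform-in-$T$ control of the smoothing error. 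But that reordering pushes the entire difficulty into your final step, and there the argument has a real gap.

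The gap is the assertion that the integrand $p|f|^{p-2}\mre(\bar f f')$ is ``almost periodic in $t$ uniformly in $\sigma$'', so that its means converge (uniformly in $\sigma$) and may be identified with $\frac{\partial}{\partial\kappa}M_p^p(f,\kappa)$. This is exactly where the zeros of $f$ enter, and you never address them. A Bohr almost periodic function must be uniformly continuous; for $p=1$ the function $\mre(\bar f f')/|f|$ is genuinely discontinuous at every zero of $f$ (near a simple zero it behaves like $|f'(s_0)|\,(\sigma-\sigma_0)/|s-s_0|$), and since zeros of an almost periodic function recur along vertical translates, near-zeros accumulate on typical vertical lines $\mre s=\kappa$ (the real parts of the zeros are in general dense in an interval), so $t\mapsto \partial_\sigma|f(\kappa+it)|$ fails to be uniformly continuous and hence is not Bohr almost periodic; its mean cannot be obtained by simply quoting almost periodicity. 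For $1<p<2$ the composition $(z,w)\mapsto p|z|^{p-2}\mre(\bar z w)$, extended by $0$ at $z=0$, is continuous, so an argument via Bohr's composition theorem could be made to work, but you give no such argument, and nothing you write covers $p=1$. This is precisely the point where the paper invests its effort: the cutoff functions $\varphi_j$ (which vanish near $\mathscr{Z}_f$) are almost periodic by construction, and the Bohr--Jessen Lemmas~\ref{lem:BJ1}--\ref{lem:dist} together with Lemma~\ref{lem:cauchy} (at most $N$ zeros per unit height, $|f|\ge m$ off $\delta$-neighborhoods of $\mathscr{Z}_f$, $|f'/f|\le A+B/\dist(s,\mathscr{Z}_f)$, $|f|\lesssim\dist(s,\mathscr{Z}_f)$) are what control the contribution of the $O(T)$ zero-neighborhoods after division by $2T$, uniformly in $\kappa$. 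Without some substitute for this machinery, the existence and uniform convergence of $\lim_{T\to\infty}\partial_\sigma I(\sigma,T)$ --- the heart of the theorem --- is not established for the full range $1\le p<\infty$.
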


It follows from \cite{BP2021}*{Theorem~3.8} that $\kappa \mapsto M_p(f,\kappa)$ is a logarithmically convex and decreasing function: the latter statement is made precise by Theorem~\ref{thm:HS}.

The strategy for our proof of Theorem~\ref{thm:HS} is essentially the same as the strategy used by P. Stein \cite{Stein1933} for the proof of the classical Hardy--Stein formula in the unit disc. However, since half-planes are unbounded we will require information about distribution of the zero sets of functions in $\mathscr{H}^\infty$. This information will be extracted from the almost periodicity by way of results due to Bohr and Jessen \cite{BJ1930}.

Since $\kappa \mapsto M_p^p(f,\kappa)$ is continuously differentiable, we can write 
\begin{equation}\label{eq:diffint} 
	M_p^p(f,\sigma_1) - M_p^p(f,\sigma_0) = \int_{\sigma_0}^{\sigma_1} \frac{\partial}{\partial \kappa} M_p^p(f,\kappa)\,d\kappa 
\end{equation}
for $\sigma_1>\sigma_0>0$. It is plain that if $f$ is the Dirichlet series \eqref{eq:diriseri}, then $f(s)$ converges uniformly to $a_1$ as $\mre{s} \to \infty$. Setting $f(+\infty)=a_1$, we have $M_p^p(f,\sigma_1) \to |f(+\infty)|^p$ as $\sigma_1 \to \infty$. The following result will be obtained from Theorem~\ref{thm:HS} via \eqref{eq:diffint}.
\begin{corollary}[Littlewood--Paley formula] \label{cor:LP} 
	Fix $1 \leq p<\infty$. If $f$ is in $\mathscr{H}^\infty$, then
	\[\|f\|_{\mathscr{H}^p}^p = |f(+\infty)|^p + \lim_{\sigma_0 \to 0^+} \lim_{T\to\infty} \frac{p^2}{2T} \int_{\sigma_0}^\infty \int_{-T}^T |f(s)|^{p-2} |f'(s)|^2 \,(\sigma-\sigma_0)\,dtd\sigma.\]
\end{corollary}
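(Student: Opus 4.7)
My plan is to derive Corollary~\ref{cor:LP} by integrating the Hardy--Stein identity (Theorem~\ref{thm:HS}) over $(\sigma_0,\infty)$, using Tonelli's theorem to manufacture the weight $(\sigma-\sigma_0)$, and then exchanging the $T\to\infty$ limit with the resulting $\sigma$-integral. Throughout I write
\[J(\kappa) := -\frac{\partial}{\partial \kappa} M_p^p(f,\kappa) = \lim_{T\to\infty} I_T(\kappa) \geq 0, \qquad I_T(\kappa) := \frac{p^2}{2T}\int_\kappa^\infty \int_{-T}^T |f|^{p-2}|f'|^2\, dt\, d\sigma.\]

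First I would let $\sigma_1 \to \infty$ in \eqref{eq:diffint}: the left side tends to $|f(+\infty)|^p - M_p^p(f,\sigma_0)$ by the remark preceding the statement of the corollary, while monotone convergence (applicable since $J \geq 0$) gives $\int_{\sigma_0}^{\sigma_1} J\,d\kappa \to \int_{\sigma_0}^\infty J\,d\kappa$. This yields
\[M_p^p(f,\sigma_0) = |f(+\infty)|^p + \int_{\sigma_0}^\infty J(\kappa)\, d\kappa.\]
Setting $\psi_T(\sigma) := \int_{-T}^T |f|^{p-2}|f'|^2\, dt$, Tonelli applied to the non-negative integrand defining $I_T$ gives
\[F_T := \frac{p^2}{2T}\int_{\sigma_0}^\infty (\sigma-\sigma_0)\psi_T(\sigma)\, d\sigma = \int_{\sigma_0}^\infty I_T(\kappa)\, d\kappa,\]
so the task reduces to showing $\lim_{T\to\infty} F_T = \int_{\sigma_0}^\infty J(\kappa)\, d\kappa$.

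The delicate step is this interchange of limit and $\kappa$-integral on the unbounded interval $[\sigma_0,\infty)$. Splitting at an auxiliary parameter $\sigma_1 > \sigma_0$, I would estimate
\[\left| F_T - \int_{\sigma_0}^\infty J\, d\kappa \right| \leq \int_{\sigma_0}^{\sigma_1} |I_T - J|\, d\kappa + \int_{\sigma_1}^\infty I_T\, d\kappa + \int_{\sigma_1}^\infty J\, d\kappa.\]
The first summand vanishes as $T\to\infty$ for fixed $\sigma_1$ by the uniform convergence in Theorem~\ref{thm:HS}. The third equals $M_p^p(f,\sigma_1) - |f(+\infty)|^p$ and vanishes as $\sigma_1 \to \infty$. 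For the second summand I would exploit the exponential decay of $f'$ at $+\infty$ intrinsic to the Dirichlet series, together with a bound on $|f|^{p-2}$ coming either from $\|f\|_{\mathscr{H}^\infty}^{p-2}$ (when $p\geq 2$) or the leading non-vanishing coefficient of $f$ (when $p<2$), to deduce $I_T(\kappa) \leq C\rho^\kappa$ for $\kappa$ large, uniformly in $T$, with some $\rho\in(0,1)$. Then $\int_{\sigma_1}^\infty I_T\, d\kappa$ can be made arbitrarily small uniformly in $T$ by taking $\sigma_1$ large.

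Choosing $\sigma_1$ large and then $T$ large yields $\lim_{T\to\infty} F_T = \int_{\sigma_0}^\infty J\, d\kappa$, hence
\[M_p^p(f,\sigma_0) = |f(+\infty)|^p + \lim_{T\to\infty} \frac{p^2}{2T}\int_{\sigma_0}^\infty \int_{-T}^T |f|^{p-2}|f'|^2\, (\sigma-\sigma_0)\, dt\, d\sigma;\]
letting $\sigma_0 \to 0^+$ and invoking \eqref{eq:Hpnorm} finishes the proof. The principal obstacle is the middle summand in the splitting above: uniform convergence on $[\sigma_0,\infty)$ from Theorem~\ref{thm:HS} alone does not suffice to exchange limit and integral on an infinite interval, so one must quantify the exponential decay of $|f|^{p-2}|f'|^2$ at $+\infty$ directly from the Dirichlet series, with some care near the zeros of $f$ when $p < 2$.
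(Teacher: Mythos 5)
Your argument is correct and follows essentially the same route as the paper: both proofs integrate the Hardy--Stein identity via \eqref{eq:diffint}, use Tonelli's theorem to produce the weight $(\sigma-\sigma_0)$, exploit the uniform convergence from Theorem~\ref{thm:HS} on bounded $\kappa$-ranges, and dispose of the tail at $+\infty$ through the exponential decay of $|f|^{p-2}|f'|^2$ (with the lower bound on $|f|$ coming from the leading nonzero coefficient when $p<2$). The only difference is bookkeeping: the paper keeps the auxiliary parameter $\sigma_1$ finite and identifies the tail contribution as $-(\sigma_1-\sigma_0)\frac{\partial}{\partial\sigma_1}M_p^p(f,\sigma_1)$ by a second appeal to Theorem~\ref{thm:HS}, whereas you estimate $\int_{\sigma_1}^\infty I_T\,d\kappa$ directly and uniformly in $T$.
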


Note that the case $p=2$ of the Littlewood--Paley formula was first established by Bayart \cite{Bayart2003}*{Proposition~2} and that a version of Corollary~\ref{cor:LP} without the precise constant was obtained by Bayart, Queff\'{e}lec, and Seip \cite{BQS2016}*{Theorem~5.1}. 

Since the elements of $\mathscr{H}^\infty$ are bounded analytic functions in the right half-plane, the boundary values 
\begin{equation}\label{eq:bv} 
	f(i\tau) = \lim_{\sigma\to 0^+} f(\sigma+i\tau) 
\end{equation}
exist for almost every real number $\tau$. It is natural to ask how much of the almost periodicity of $f$ is carried to the boundary values. This question has several interpretations, and ours is primarily inspired by work of Saksman and Seip \cite{SS2009}.
\begin{theorem}[Saksman--Seip] \label{thm:SS} 
	Fix $1 \leq p<\infty$. 
	\begin{enumerate}
		\item[(a)] For every $0<\varepsilon<1$, there is a function $f$ in $\mathscr{H}^\infty$ such that $\|f\|_{\mathscr{H}^p} = \varepsilon$ and such that
		\[\lim_{T\to\infty} \frac{1}{2T} \int_{-T}^T |f(i\tau)|^p \,d\tau = 1.\]
		\item[(b)] There is a function $f$ in $\mathscr{H}^\infty$ such that
		\[\lim_{T\to \infty} \frac{1}{2T} \int_{-T}^T |f(i \tau)|^p \,dt\]
		does not exist. 
	\end{enumerate}
\end{theorem}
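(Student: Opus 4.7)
The essence of the construction, due to Saksman and Seip \cite{SS2009}, is the mismatch between two kinds of averages of $|f|^p$: the \emph{interior} almost-periodic mean over vertical lines $\mre s = \sigma > 0$, which by the Bohr correspondence and Kronecker's theorem equals the Haar integral of $|F|^p$ over the infinite-dimensional torus $\mathbb{T}^\infty$ (where $F$ is the Bohr lift of $f$), and the \emph{boundary} almost-periodic mean along the measure-zero Kronecker orbit $\{(p_k^{-i\tau})\}_{k \geq 1} \subset \mathbb{T}^\infty$. If $|F|^p$ were continuous on $\mathbb{T}^\infty$ the two averages would coincide by Weyl equidistribution, so both parts must exploit the fact that, generically, $F \in H^\infty(\mathbb{T}^\infty)$ has radial boundary values defined only almost everywhere.

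For (a), I would follow the Saksman--Seip prescription and construct a nonconstant $f \in \mathscr{H}^\infty$ that is inner on the half-plane, i.e., $|f(i\tau)| = 1$ for almost every $\tau \in \mathbb{R}$. The boundary average of $|f|^p$ is then automatically equal to $1$, while the interior $\mathscr{H}^p$ norm is strictly less than $1$. To realize the prescribed value $\varepsilon$, one tunes the construction, for instance by incorporating an additional factor depending on a new prime variable and calibrating its parameters via Corollary~\ref{cor:LP}. This step is genuinely delicate: on the unit disc every inner function has $H^p$ norm equal to one, and the possibility of lowering the norm here is a specific feature of averaging on $\mathbb{T}^\infty$ rather than on the one-dimensional boundary.

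For (b), I would diagonalize the construction of (a) across rapidly increasing scales. Choose disjoint blocks of primes $\mathcal{P}_k$ and functions $f_k \in \mathscr{H}^\infty$ built from primes in $\mathcal{P}_k$ using the recipe of (a), together with scales $T_k \to \infty$ so that the running average $(2T)^{-1}\int_{-T}^T |f(i\tau)|^p \, d\tau$ is resolved to the boundary behavior of $f_k$ at time $T = T_k$. Independence of the Kronecker flows associated with distinct blocks allows one to force oscillation between two distinct accumulation points as $k$ varies, and the limit then fails to exist.

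The hard part is~(a). The machinery of almost periodicity, Bohr--Jessen zero distribution, and the just-proved Hardy--Stein identity supports the calculation of $\|f\|_{\mathscr{H}^p}$ for any candidate, but producing the candidate, namely a function in $\mathscr{H}^\infty$ that is inner on the half-plane yet not Haar-almost-everywhere unimodular on $\mathbb{T}^\infty$, is the nontrivial Saksman--Seip contribution; part~(b) is then obtained by a careful diagonal argument built on top of it.
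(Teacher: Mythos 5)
Your overall framing of (a) --- the clash between the Haar average over $\mathbb{T}^\infty$ and the average along the measure-zero Kronecker orbit, with the hard construction deferred to Saksman--Seip --- is how the paper itself treats Theorem~\ref{thm:SS}: it records the construction as Theorem~\ref{thm:SSconstruct} and applies it with an open set $U \supset \mathscr{L}$ of small measure, so that property (iii) gives $|f(i\tau)|=1$ for a.e.\ $\tau$ and the boundary mean is $1$. However, your device for hitting the exact value $\varepsilon$ fails. If $f_1(s)=F(2^{-s},3^{-s})$ is such a function and you multiply by a factor $g(5^{-s})$ in a new prime, then $\tau \mapsto |g(5^{-i\tau})|^p$ is periodic, so the boundary mean of $|f_1(s)g(5^{-s})|^p$ equals the mean of $|g|^p$ over the circle; forcing that to be $1$, and using that $m_\infty$ is a product measure, gives $\|f_1 \cdot g(5^{-\cdot})\|_{\mathscr{H}^p}^p = \|f_1\|_{\mathscr{H}^p}^p$, so the extra factor either ruins the boundary mean or changes nothing. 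The calibration must be done inside the two-variable construction: by Theorem~\ref{thm:SSconstruct}(i)--(ii) and Theorem~\ref{thm:Hpchiswap}, $\|f\|_{\mathscr{H}^p}^p = m_2(U) + (1-m_2(U))\delta^p$, so one takes $U \supset \mathscr{L}$ with $m_2(U)<\varepsilon^p$ and solves for $\delta \in (0,1)$. (Also, innerness on the line alone does not force $\|f\|_{\mathscr{H}^p}<1$ --- consider $f(s)=2^{-s}$ --- though you do acknowledge at the end that the coexistence of the two properties is the real content.)

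The genuine gap is in (b). Every function produced by your recipe for (a) satisfies $|f_k(i\tau)|=1$ for a.e.\ $\tau$, and any natural assembly of such functions over disjoint blocks of primes --- in particular a finite product, or an infinite product converging in $\mathscr{H}^\infty$ --- is again unimodular a.e.\ on the imaginary axis, so all running averages are identically $1$ and the limit exists; you also never specify the combination rule. Independence of the Kronecker flows on the blocks cannot rescue this, because the failure of convergence must be visible in the single function $\tau \mapsto |f(i\tau)|$, on which your inner building blocks leave no trace. What the paper does instead (following Saksman--Seip; see the proof of Theorem~\ref{thm:nolim}) is a single application of Theorem~\ref{thm:SSconstruct} on two primes, with $U$ a union of thin neighborhoods of every other one of a sequence of rapidly lengthening segments $\mathscr{L}_{n_k}$ of the Kronecker line, the lengths $n_k$ being chosen via the ergodic theorem \eqref{eq:ergodicopen} (as in \eqref{eq:ergodicuse}) so that up to time $n_{2k}$ the orbit spends all but a $\delta$-fraction of its time where $|f|=1$, while up to time $n_{2k+1}$ it spends all but a $\delta$-fraction where $|f|=\delta$. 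Then $\frac{1}{2T}\int_{-T}^T |f(i\tau)|^p\,d\tau$ oscillates between values near $1$ and values near $\delta^p$, so the limit fails to exist. The alternation must thus be built into the boundary modulus of one two-variable function across nested time scales, not assembled from independent inner pieces.
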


From a historical perspective, let us mention that Theorem~\ref{thm:SS}~(a) in a slightly different setting was obtain by Besicovitch~\cite{Besicovitch1927} by completely different techniques. 

The underlying issue in Theorem~\ref{thm:SS} is that the set of boundary values \eqref{eq:bv} is too small to capture in a complete way the almost periodicity of $f$. The correct approach is through the boundary values of the vertical limit functions of $f$. From this point of view, the almost periodicity of $f$ in $\mathbb{C}_\kappa$ for every $\kappa>0$ is carried over to the ergodicity of the Kronecker flow on the imaginary axis. This point will be elucidated in the preliminary Section~\ref{sec:bvbv} below.

We may interpret Theorem~\ref{thm:SS} as saying that the limits in $\sigma_0$ and $T$ in \eqref{eq:Mpmean} and \eqref{eq:Hpnorm} may in general not be interchanged without damage to the result. It is natural to wonder whether the same phenomenon holds for the Littlewood--Paley formula in Corollary~\ref{cor:LP}. This is indeed the case, as follows directly from Theorem~\ref{thm:SS} and the following result.
\begin{theorem}\label{thm:LPswap} 
	Fix $1 \leq p<\infty$ and suppose that $f$ is in $\mathscr{H}^\infty$. Then
	\[\lim_{T\to\infty} \left|\frac{1}{2T} \int_{-T}^T |f(i\tau)|^p \,dt - |f(+\infty)|^p - \frac{p^2}{2T}\int_0^\infty \int_{-T}^T |f(s)|^{p-2} |f'(s)|^2 \,\sigma dt d\sigma \right|=0.\]
\end{theorem}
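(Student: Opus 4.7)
The plan is to derive the stated identity \emph{pointwise in $T$} by applying Green's second identity to $u = |f|^p$ on a truncated rectangle, passing two auxiliary limits, and then estimating a single boundary term on the horizontal edges $t = \pm T$. Since $f$ is holomorphic, $\Delta u = p^2 |f|^{p-2} |f'|^2$ away from the zero set of $f$; to accommodate $1 \leq p < 2$, where $u$ fails to be $C^2$ at zeros, I would first carry out the computation with the regularization $u_\varepsilon = (|f|^2+\varepsilon)^{p/2}$, which is smooth and satisfies $\Delta u_\varepsilon = 2p(|f|^2+\varepsilon)^{p/2-2} |f'|^2 \bigl((p/2)|f|^2 + \varepsilon\bigr)$, and pass $\varepsilon \to 0^+$ at the end via monotone/dominated convergence.

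Fix $0 < \sigma_0 < A$. Green's second identity for $u$ and the harmonic function $\phi(\sigma,t) = \sigma - \sigma_0$ on $R = (\sigma_0, A) \times (-T,T)$, together with the fact that $\phi$ vanishes on the left edge and $\partial_n \phi = 0$ on the horizontal edges, gives after rearrangement
\begin{align*}
\frac{1}{2T} \int_{-T}^T u(\sigma_0, t)\,dt &= \frac{1}{2T} \int_{-T}^T \bigl[u(A,t) - (A-\sigma_0)\partial_\sigma u(A,t)\bigr]\,dt \\
&\quad + \frac{p^2}{2T} \int_{\sigma_0}^A \int_{-T}^T (\sigma-\sigma_0)|f(s)|^{p-2}|f'(s)|^2\,dt\,d\sigma \\
&\quad - \frac{1}{2T} \int_{\sigma_0}^A (\sigma-\sigma_0)\bigl[\partial_t u(\sigma, T) - \partial_t u(\sigma, -T)\bigr]\,d\sigma.
\end{align*}
Sending $A \to \infty$, the uniform convergence of the Dirichlet series for $f$ and $f'$ on every half-plane $\mathbb{C}_\kappa$ (Bohr's theorem) yields $u(A,t) \to |f(+\infty)|^p$ uniformly in $t$ while $(A-\sigma_0)\partial_\sigma u(A,t) = O(A \cdot 2^{-A}) \to 0$. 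Sending $\sigma_0 \to 0^+$, bounded convergence on the left (with majorant $\|f\|_\infty^p$) replaces $u(\sigma_0,t)$ by the nontangential boundary value $|f(it)|^p$; monotone convergence handles the positive double integral, and dominated convergence, justified by the uniform bound proved next, handles the single integral. The result is the identity
\[\frac{1}{2T} \int_{-T}^T |f(it)|^p\,dt - |f(+\infty)|^p - \frac{p^2}{2T} \int_0^\infty \int_{-T}^T \sigma |f(s)|^{p-2} |f'(s)|^2\,dt\,d\sigma = -\frac{1}{2T} \int_0^\infty \sigma \bigl[\partial_t u(\sigma, T) - \partial_t u(\sigma, -T)\bigr]\,d\sigma.\]

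It remains to show the right-hand side is $O(T^{-1})$. Using $|\partial_t |f|^p| \leq p |f|^{p-1} |f'|$ and $|f| \leq \|f\|_\infty$, it suffices to bound $\int_0^\infty \sigma |f'(\sigma \pm iT)|\,d\sigma$ uniformly in $T$. On $(0,2]$, Cauchy's integral formula on the disc of radius $\sigma/2$ centered at $\sigma \pm iT \in \mathbb{C}_0$ gives $|f'(\sigma \pm iT)| \leq 2\|f\|_\infty / \sigma$, hence $\sigma|f'| \leq 2\|f\|_\infty$. On $[2,\infty)$, the coefficient bound $|a_n| \leq \|f\|_\infty$ gives $|f'(\sigma \pm iT)| \leq \|f\|_\infty \sum_{n\geq 2} \log(n) n^{-\sigma}$, and an explicit computation yields $\sum_{n\geq 2} \log(n) \int_2^\infty \sigma n^{-\sigma}\,d\sigma = \sum_{n\geq 2} (2\log n + 1)/(n^2 \log n) < \infty$. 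Combining these, the right-hand side is bounded in modulus by $C(f)\|f\|_\infty^{p-1} p / T$, which tends to $0$.

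The main obstacle is controlling the horizontal-edge term uniformly in $T$: the bound must hold for every $T$ using only the $\mathscr{H}^\infty$-norm, which is why both a Cauchy estimate near the imaginary axis \emph{and} a Dirichlet-series estimate for large $\sigma$ are needed. A secondary technical point is the regularity of $u = |f|^p$ near the zeros of $f$ when $p < 2$, which I would address by the $\varepsilon$-regularization described above; the finiteness of the limiting area integral for each fixed $T$ follows from the classical Littlewood--Paley formula on the half-plane applied to $f$ restricted to vertical strips.
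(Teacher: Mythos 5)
Your proposal is correct and follows essentially the same route as the paper: Green's theorem with the weight $\sigma-\sigma_0$ on a rectangle at fixed $T$, letting $\sigma_1\to\infty$ and $\sigma_0\to 0^+$, and then bounding the horizontal-edge terms uniformly in $T$ by exactly the paper's estimate $\int_0^\infty \sigma\,|f(\sigma\pm iT)|^{p-1}|f'(\sigma\pm iT)|\,d\sigma \leq C(f)$, via the Cauchy bound $|f'(\sigma+i\tau)|\lesssim \|f\|_{\mathscr{H}^\infty}/\sigma$ near the axis and the exponential decay of $f'$ for large $\sigma$. The only real difference is cosmetic: you regularize $|f|^p$ by $(|f|^2+\varepsilon)^{p/2}$ (which works, with a Fatou-plus-dominated-convergence argument in the $\varepsilon$-limit), whereas the paper reuses the smooth truncations $\varphi_j$ from its Hardy--Stein proof.
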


This illustrates in a perhaps stronger sense how the almost periodicity of functions in $\mathscr{H}^\infty$ fails to extend from $\mathbb{C}_\kappa$ for every $\kappa>0$ to $\mathbb{C}_0$: we are not in Corollary~\ref{cor:LP} concerned with the boundary values, but rather the order of integration in $\mathbb{C}_0$.

If $f$ is in $\mathscr{H}^\infty$, then the \emph{mean counting function}\footnote{Our definition of the mean counting function in \eqref{eq:meancounting} differs from the definition presented in \cite{BP2021}, where $(\mre{s}-\sigma_0)$ is replaced by $\mre{s}$. The present definition is more natural from the point of view of both the Littlewood--Paley formula and Littlewood's argument principle, and is therefore easier to analyze. The two competing definitions are equivalent due to \cite{BP2021}*{Theorem~6.4}, but this requires a deep result of Jessen and Tornehave \cite{JT1945}. } 
\begin{equation}\label{eq:meancounting} 
	\mathscr{M}_f(\xi) = \lim_{\sigma_0 \to 0^+} \lim_{T\to\infty} \frac{\pi}{T} \sum_{\substack{s \in f^{-1}(\{\xi\}) \\
	|\mim{s}| < T \\
	\sigma_0<\mre{s}<\infty}} \left(\mre{s}-\sigma_0\right), 
\end{equation}
is well-defined for every $\xi$ in $\mathbb{D} \setminus \{f(+\infty)\}$, as demonstrated recently in \cite{BP2021}. It plays the same fundamental role as the Nevanlinna counting function does in the classical theory, in connection for example with the Nevanlinna class, formulas of Jensen-type, and composition operators. See the main results of \cite{BP2021} and \cite{Shapiro1987}, as well as Theorem~\ref{thm:Nfswap} below. If $f$ maps $\mathbb{C}_0$ to the unit disc $\mathbb{D}$, then the mean counting function enjoys the same pointwise Littlewood-type estimate 
\begin{equation}\label{eq:Lest} 
	\mathscr{M}_f(\xi) \leq \log\left|\frac{1-\overline{\xi}f(+\infty)}{\xi-f(+\infty)}\right| 
\end{equation}
as the Nevanlinna counting function.

It was enquired in \cite{BP2021}*{Problem~1} whether the formula \eqref{eq:meancounting} is true if the limits in $\sigma_0$ and $T$ are interchanged. Suppose that $f$ is a Dirichlet series mapping $\mathbb{C}_0$ to $\mathbb{D}$. Setting 
\[N_f(\xi,T) = \frac{\pi}{T} \sum_{\substack{s \in f^{-1}(\{\xi\}) \\
|\mim{s}| < T \\
0<\mre{s}<\infty}}\mre{s},\]
it is not difficult to see that this problem can be reformulated as to ask whether the limit 
\begin{equation}\label{eq:limit} 
	\lim_{T\to\infty} N_f(\xi,T) 
\end{equation}
equals $\mathscr{M}_f(\xi)$. It follows directly from the definitions of $\mathscr{M}_f$ and $N_f$ that 
\begin{align*}
	\mathscr{M}_f(\xi) \leq &\liminf_{T\to \infty} N_f(\xi,T), \intertext{and we shall establish in Theorem~\ref{thm:limsup} below that} &\limsup_{T\to \infty} N_f(\xi,T) \leq \log\left|\frac{1-\overline{\xi}f(+\infty)}{\xi-f(+\infty)}\right| 
\end{align*}
for $\xi$ in $\mathbb{D} \setminus \{f(+\infty)\}$. Consequently, if the Littlewood-type estimate \eqref{eq:Lest} is attained for some $\xi$, then the limit \eqref{eq:limit} equals $\mathscr{M}_f(\xi)$. Recall from \cite{BP2021}*{Theorem~6.6} that if the Littlewood-type estimate \eqref{eq:Lest} is attained for \emph{one} $\xi$, then it is attained for \emph{quasi-every} $\xi$, and $f$ must be inner with respect to $\mathscr{H}^\infty$, in the sense that $\|f\|_{\mathscr{H}^p} = \|f\|_{\mathscr{H}^\infty}$ for all $1 \leq p<\infty$.

To state the analogue of Theorem~\ref{thm:LPswap} for the counting function $N_f(\xi,T)$, we recall that if $f$ is an analytic function that maps the right half-plane to the unit disc, then the \emph{Frostman shifts} of $f$ are 
\begin{equation}\label{eq:frostman} 
	f_\xi(s) = \frac{\xi-f(s)}{1-\overline{\xi}f(s)} 
\end{equation}
for $\xi$ in $\mathbb{D}$. The key point is Frostman's theorem or---more precisely---Rudin's generalization of Frostman's theorem \cite{Rudin1967}, which asserts that $f_\xi$ lacks a singular inner part for quasi-every $\xi$ in $\mathbb{D}$.
\begin{theorem}\label{thm:Nfswap} 
	If $f$ is a Dirichlet series mapping $\mathbb{C}_0$ to $\mathbb{D}$, then
	\[\lim_{T\to\infty} \left|N_f(\xi,T) - \frac{1}{2T}\int_{-T}^T \log|f_\xi(i\tau)| \,d\tau -\log\left|\frac{1-\overline{\xi}f(+\infty)}{\xi-f(+\infty)}\right|\right|=0\]
	for quasi-every $\xi$ in $\mathbb{D}\setminus\{f(+\infty)\}$. 
\end{theorem}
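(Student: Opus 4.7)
The plan is to mirror the Littlewood--Paley--type argument that underlies Theorem~\ref{thm:LPswap}, replacing the subharmonic quantity $|f|^p$ by $\log|f_\xi|$ and incorporating the distributional identity $\Delta \log|f_\xi| = 2\pi \sum_{\rho} m_\rho \delta_\rho$, where the sum runs over the zeros of $f_\xi$ counted with multiplicity. The counting sum $\sum_\rho \mre(\rho)$ will arise as the integral of $\sigma$ against this Laplacian.

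Fix $\xi \in \mathbb{D} \setminus \{f(+\infty)\}$ for which Rudin's generalization of Frostman's theorem applies, so that $f_\xi$ has no singular inner factor; this is the source of the quasi-every qualifier. Apply Green's second identity with $u = \log|f_\xi|$ and the harmonic test function $v(\sigma + it) = \sigma$ on the rectangle $R = (\sigma_0, \sigma_1) \times (-T, T)$, chosen so that $f_\xi$ has no zeros on $\partial R$. This yields an exact identity with $-2\pi \sum_{\rho \in R} m_\rho \mre(\rho)$ on one side and, on the other, vertical boundary integrals of $\log|f_\xi|$ together with horizontal integrals of $\sigma\,\partial_t \log|f_\xi|$ at $t = \pm T$.

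Next I pass to the limits $\sigma_1 \to \infty$ and $\sigma_0 \to 0^+$. The first is routine: uniform convergence of the Dirichlet series $f_\xi$, together with exponential decay of $f_\xi'$ in $\sigma$, turns the right edge into $2T \log|f_\xi(+\infty)|$ plus a vanishing error. The second is where the quasi-every hypothesis enters: absence of a singular inner factor yields $\log|f_\xi(\sigma_0 + i\cdot)| \to \log|f_\xi(i\cdot)|$ in $L^1([-T,T])$. After dividing by $2T$ and using $-\log|f_\xi(+\infty)| = \log\bigl|(1-\overline{\xi}f(+\infty))/(\xi-f(+\infty))\bigr|$, one obtains the exact identity
\[N_f(\xi, T) - \frac{1}{2T}\int_{-T}^T \log|f_\xi(i\tau)|\,d\tau - \log\left|\frac{1-\overline{\xi}f(+\infty)}{\xi-f(+\infty)}\right| = -\frac{E(T)}{2T},\]
where
\[E(T) = \int_0^\infty \sigma \bigl[\partial_t \log|f_\xi(\sigma - iT)| - \partial_t \log|f_\xi(\sigma + iT)|\bigr]\,d\sigma\]
is the contribution from the two horizontal edges.

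The main obstacle is to show $E(T) = o(T)$; this is the direct analogue of the horizontal error estimate needed in the proof of Theorem~\ref{thm:LPswap}, and I would follow the same template. Splitting at a small threshold $\delta > 0$: on $[\delta, \infty)$, the Dirichlet coefficients of $f_\xi$ decay exponentially in $\sigma$ and the density of zeros of $f_\xi$ in $\{\mre(s) \geq \delta\}$ is controlled by the Bohr--Jessen theory used to prove Theorem~\ref{thm:HS}, so this contribution is $O(1)$ uniformly in $T$. On $(0, \delta)$, use the factorization $f_\xi = B_\xi G_\xi$ into a Blaschke product and an outer function, which is valid precisely because the singular inner part is absent: the Blaschke contribution is handled via the paper's Blaschke-product theory for Dirichlet series announced in the abstract, while the outer contribution is estimated by the same almost-periodic mean-value argument as for Theorem~\ref{thm:LPswap}. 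The decisive difficulty over Theorem~\ref{thm:LPswap} is that $\log|f_\xi|$ has logarithmic singularities at the zeros of $f_\xi$, so pointwise bounds are unavailable and the estimate must proceed through factorization together with mean-value arguments for almost periodic functions.
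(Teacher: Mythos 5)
Your outline coincides with the paper's own route: Green's identity with the harmonic weight $\sigma$ is just Littlewood's argument principle after an integration by parts, Rudin's form of Frostman's theorem supplies the quasi-every absence of a singular inner factor and hence the $L^1([-T,T])$ convergence of $\log|f_\xi(\sigma_0+i\cdot)|$ as $\sigma_0\to0^+$, and the far edge produces $2T\log|f_\xi(+\infty)|$, so everything reduces to showing that the horizontal error $E(T)$ is $o(T)$. It is precisely there that your proposal has a genuine gap: the estimates you invoke are either not supplied or attributed to the wrong mechanism. First, on $[\delta,\infty)$ the Bohr--Jessen zero-density bounds alone do not give a bound uniform in $T$, because the lines $t=\pm T$ may pass arbitrarily close to zeros of $f_\xi$, where $\partial_t\log|f_\xi(\sigma\pm iT)|$ blows up; one must first work with zero-free horizontal lines and then extend to the full limit, and that extension itself requires a uniform bound on the weighted zero counts, not just on their number.

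Second, and more seriously, the decisive quantitative inputs near $\sigma=0$ are the uniform-in-$\tau$ strong Blaschke condition \eqref{eq:bloc} for the zeros of $f_\xi$ and the uniform local integrability \eqref{eq:logloc} of $\log|f_\xi(i\tau)|$ (Theorem~\ref{thm:Hinftyloc}), fed into the explicit bounds of Lemma~\ref{lem:blaschkeint} for $\mim\int (B_\xi'/B_\xi)\,\sigma\,d\sigma$ and Lemma~\ref{lem:outer} for the outer part. You defer the Blaschke contribution to ``the paper's Blaschke-product theory'' without proving the required condition, and your proposed treatment of the outer contribution --- an almost-periodic mean-value argument as in Theorem~\ref{thm:LPswap} --- would fail: the boundary modulus $|f_\xi(i\tau)|$ is not almost periodic (that failure is the theme of the paper), and the bound $\sup_t\int_t^{t+1}\big|\log|f_\xi(i\tau)|\big|\,d\tau<\infty$ does not follow from almost periodicity; in the paper it is deduced from the behavior of $f_\xi$ as $\mre{s}\to\infty$ (it is bounded below on a distant vertical line because $\xi\neq f(+\infty)$), via Jensen's formula and a Poisson-kernel estimate. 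Without these inputs the claim $E(T)=o(T)$ is unsubstantiated. A further minor point: taking $v=\sigma$ instead of $\sigma-\sigma_0$ leaves the extra left-edge term $\sigma_0\int_{-T}^T\partial_\sigma\log|f_\xi(\sigma_0+it)|\,dt$, which your sketch does not address.
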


The proof of Theorem~\ref{thm:Nfswap} relies on a new result (Theorem~\ref{thm:Hinftyloc} below) asserting that the zero sets of functions in $\mathscr{H}^\infty$ enjoy a strong version of the classical Blaschke condition. In contrast to the results about the zero sets of functions in $\mathscr{H}^\infty$ mentioned above, this result does not stem from the almost periodicity of functions in $\mathscr{H}^\infty$, but rather from their behavior as $\mre{s} \to \infty$. We will compile the various results about the zero sets of functions in $\mathscr{H}^\infty$ in Section~\ref{sec:zeroset}.

Armed with Theorem~\ref{thm:Nfswap} and the construction of Saksman and Seip (presented in Theorem~\ref{thm:SSconstruct} below), we will establish the two following results.
\begin{theorem}\label{thm:largelim} 
	Let $0<\varepsilon<1$ be fixed. There is a Dirichlet series $f$ mapping $\mathbb{C}_0$ to $\mathbb{D}$ with $f(+\infty)=0$ such that 
	\begin{equation}\label{eq:largelim2} 
		\lim_{T\to\infty} N_f(\xi,T) = \log{\frac{1}{|\xi|}} 
	\end{equation}
	for quasi-every $\xi$ in $\mathbb{D} \setminus \{0\}$, yet 
	\begin{equation}\label{eq:largelim1} 
		\mathscr{M}_f(\xi) \leq \log{\frac{1}{|\xi|}} - (1-\varepsilon) \frac{1-|\xi|^2}{2}, 
	\end{equation}
	for every $\xi$ in $\mathbb{D}\setminus\{0\}$. 
\end{theorem}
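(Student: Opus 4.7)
The plan is to start from the Saksman--Seip construction (Theorem~\ref{thm:SSconstruct}), producing $f \in \mathscr{H}^\infty$ mapping $\mathbb{C}_0$ into $\mathbb{D}$ with $f(+\infty) = 0$, $\|f\|_{\mathscr{H}^2} = \varepsilon$, and $|f(i\tau)| = 1$ for Lebesgue almost every $\tau \in \mathbb{R}$ (a hallmark of the Saksman--Seip examples). Since Möbius automorphisms of $\mathbb{D}$ preserve $\partial \mathbb{D}$, also $|f_\xi(i\tau)| = 1$ a.e.\ for every $\xi \in \mathbb{D}$, so $(2T)^{-1}\int_{-T}^T \log|f_\xi(i\tau)|\,d\tau = 0$ for every $T > 0$. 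Combined with Theorem~\ref{thm:Nfswap}, this gives \eqref{eq:largelim2}.

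For \eqref{eq:largelim1}, I would exploit the Jessen identity
\[\mathscr{M}_f(\xi) = \log(1/|\xi|) + J_{f_\xi}(0^+), \qquad J_g(\sigma) := \lim_{T\to\infty}\frac{1}{2T}\int_{-T}^T \log|g(\sigma + it)|\,dt,\]
which follows from Littlewood's argument principle applied in vertical strips of $\mathbb{C}_0$ (Bohr--Jessen). Via the Bohr lift, $J_{f_\xi}(\sigma)$ is a Haar integral of $\log|\phi_\xi(\tilde f(\sigma, \cdot))|$ over the infinite torus. The elementary bound $\log|w| \leq -(1-|w|^2)/2$ for $|w| \leq 1$ combined with the Möbius identity $1 - |\phi_\xi(w)|^2 = (1-|\xi|^2)(1-|w|^2)/|1-\bar\xi w|^2$, followed by passing $\sigma \to 0^+$, yields
\[J_{f_\xi}(0^+) \leq -\frac{1-\|f_\xi\|_{\mathscr{H}^2}^2}{2}.\]
Hence it suffices to establish the upper bound $\|f_\xi\|_{\mathscr{H}^2}^2 \leq 1 - (1-\varepsilon)(1-|\xi|^2)$.

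The main obstacle is precisely this final estimate, equivalent to the lower bound $\int_{\mathbb{T}^\infty}(1-|\tilde f(0, \omega)|^2)/|1-\bar\xi\tilde f(0, \omega)|^2\,d\omega \geq 1 - \varepsilon$, where $\tilde f(0, \cdot)$ denotes the Bohr-lifted boundary values on $\mathbb{T}^\infty$. The elementary bound $|1-\bar\xi\tilde f| \leq 1+|\xi|$ falls short of this unless $|\xi|$ is small, so one must leverage the explicit structure of the Saksman--Seip construction, which concentrates the distribution of $\tilde f(0, \omega)$ on $\overline{\mathbb{D}}$. For instance, if the pushforward of Haar measure by $\tilde f(0, \cdot)$ is of the form $(1-\varepsilon^2)\delta_0 + \varepsilon^2\sigma_{\partial\mathbb{D}}$ (with $\sigma_{\partial\mathbb{D}}$ normalised arc-length on the unit circle), then a direct computation gives $\|f_\xi\|_{\mathscr{H}^2}^2 = |\xi|^2 + \varepsilon^2(1-|\xi|^2)$, yielding even the sharper estimate $\mathscr{M}_f(\xi) \leq \log(1/|\xi|) - (1-\varepsilon^2)(1-|\xi|^2)/2$ and a fortiori~\eqref{eq:largelim1}.
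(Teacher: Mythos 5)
Your treatment of \eqref{eq:largelim2} is essentially correct and coincides with the paper's: choosing the open set in Theorem~\ref{thm:SSconstruct} to contain $\mathscr{L}$ forces $|f(i\tau)|=1$ for a.e.\ $\tau$, hence $|f_\xi(i\tau)|=1$ a.e.\ for every $\xi$, and Theorem~\ref{thm:Nfswap} then gives the claim for quasi-every $\xi$. Likewise, your reduction of \eqref{eq:largelim1} via Jensen's formula (Theorem~\ref{thm:jensen}), Lemma~\ref{lem:jessenchi}, and the bound $\log|w|\le-(1-|w|^2)/2$ to the inequality $\|f_\xi\|_{\mathscr{H}^2}^2\le 1-(1-\varepsilon)(1-|\xi|^2)$ runs parallel to the paper, which encodes the same bound pointwise through Lemma~\ref{lem:logxi}.

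The genuine gap is that this final inequality, which you yourself flag as the main obstacle, is never proved. The only evidence offered is a hypothetical distribution of boundary values, namely that the pushforward of $m_\infty$ under $f^\ast$ be $(1-\varepsilon^2)\delta_0+\varepsilon^2\sigma_{\partial\mathbb{D}}$; this is impossible for a nontrivial element of $\mathscr{H}^\infty$, since it would force $|f^\ast|=0$ on a set of positive Haar measure, contradicting $\log|f^\ast|\in L^1(\mathbb{T}^\infty)$ (Lemma~\ref{lem:jessenchi}). Moreover, as you observe, knowing only $\|f\|_{\mathscr{H}^2}=\varepsilon$ together with $|f(i\tau)|=1$ a.e.\ (the data of Theorem~\ref{thm:SS}~(a)) cannot yield the bound, because $|1-\overline{\xi}f^\ast|$ may be as large as $1+|\xi|$. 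What is actually needed, and what the paper uses, is the quantitative content of Theorem~\ref{thm:SSconstruct}: take $U=U_\delta$ an open neighborhood of $\mathscr{L}$ with $m_2(U_\delta)\le\delta$, so that $|f^\ast|=1$ only on a set of measure at most $\delta$ while $|f^\ast|=\delta$ on the rest of $\mathbb{T}^2$. Then the upper bound of Lemma~\ref{lem:logxi} (equivalently, your $\mathscr{H}^2$ computation) gives $\log|f_\xi^\ast(\chi)|\le-\frac{1-\delta}{1+\delta}\,\frac{1-|\xi|^2}{2}$ for a.e.\ $\chi$ outside $U_\delta$, hence $\int_{\mathbb{T}^\infty}\log|f_\xi^\ast|\,dm_\infty\le-(1-\delta)\frac{1-\delta}{1+\delta}\,\frac{1-|\xi|^2}{2}$, and choosing $\delta$ with $(1-\delta)^2/(1+\delta)=1-\varepsilon$ yields \eqref{eq:largelim1} for every $\xi\neq0$. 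A minor further point: the construction does not by itself give $f(+\infty)=0$; the paper arranges this by multiplying the constructed series by $5^{-s}$, which leaves all boundary moduli unchanged, whereas you simply asserted this normalization.
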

In comparing \eqref{eq:largelim2} and \eqref{eq:largelim1}, it may be useful to note that that \eqref{eq:largelim1} in particular yields that
\[\limsup_{|\xi| \to 1^{-}} \frac{\mathscr{M}_f(\xi)}{-\log{|\xi|}} \leq \varepsilon.\]
\begin{theorem}\label{thm:nolim} 
	For any $0<\varepsilon<1$, there is a Dirichlet series $f$ mapping $\mathbb{C}_0$ to $\mathbb{D}$ such that the limit
	\[\lim_{T\to\infty} N_f(\xi,T)\]
	fails to exist for quasi-every $\xi$ such that $\varepsilon < |\xi| < 1$. 
\end{theorem}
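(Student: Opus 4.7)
The plan is to combine Theorem~\ref{thm:Nfswap} with the Saksman--Seip construction of Theorem~\ref{thm:SSconstruct}. By Theorem~\ref{thm:Nfswap}, for quasi-every $\xi\in\mathbb{D}\setminus\{f(+\infty)\}$ the existence of $\lim_{T\to\infty} N_f(\xi,T)$ is equivalent to the existence of
\[
    \lim_{T\to\infty} \frac{1}{2T}\int_{-T}^T \log|f_\xi(i\tau)|\,d\tau.
\]
It therefore suffices to exhibit a single Dirichlet series $f\colon\mathbb{C}_0\to\mathbb{D}$ for which this boundary average fails to converge for quasi-every $\xi$ with $\varepsilon<|\xi|<1$.

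I would produce such an $f$ by refining the construction of Theorem~\ref{thm:SSconstruct} so that the empirical distributions $\mu_T$ on $\overline{\mathbb{D}}$ defined by $\mu_T(E)=\tfrac{1}{2T}|\{\tau\in[-T,T]\colon f(i\tau)\in E\}|$ admit two distinct weak subsequential limits $\mu_1,\mu_2$ as $T\to\infty$. A natural choice, given the threshold $\varepsilon$, is to force the supports of $\mu_1$ and $\mu_2$ to lie on the circles $\{|w|=1\}$ and $\{|w|=\varepsilon\}$ respectively, with uniform angular distribution on each. Along a subsequence $T_k$ with $\mu_{T_k}\rightharpoonup\mu_j$, one may pass to the limit in the boundary integral: the integrand $w\mapsto\log|(\xi-w)/(1-\overline{\xi}w)|$ is continuous on $\overline{\mathbb{D}}$ away from $w=\xi$, and its logarithmic singularity at $\xi$ is handled by a uniform integrability argument valid outside a polar exceptional set in $\xi$, of the same type as the one appearing in Theorem~\ref{thm:Nfswap}.

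Jensen's formula applied to the M\"{o}bius automorphism $w\mapsto(\xi-w)/(1-\overline{\xi}w)$ on the disk $\{|w|<r\}$ shows that the integral of $\log|f_\xi|$ against the uniform measure on $\{|w|=r\}$ equals $\log\max(r,|\xi|)$. Thus for $r=1$ the limit is $0$, while for $r=\varepsilon$ the limit is $\log|\xi|$ on the annulus $\varepsilon<|\xi|<1$. The two subsequential limits of the boundary average are therefore distinct on precisely that annulus, so the average cannot converge there, and Theorem~\ref{thm:Nfswap} transfers the non-convergence back to $N_f(\xi,T)$.

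The main obstacle is the construction itself. Theorem~\ref{thm:SS}(b) supplies only oscillation of the scalar quantity $\tfrac{1}{2T}\int|f(i\tau)|^p\,d\tau$, while the argument above requires oscillation of the full empirical distribution $\mu_T$ between two rigid limit profiles concentrated on concentric circles. Achieving this likely requires combining the Saksman--Seip building block with an auxiliary mechanism---for example, multiplication by an independent Dirichlet series whose vertical translates equidistribute the argument of $f(i\tau)$---so as to control both the modulus and the angular distribution of the boundary values simultaneously. Once such an $f$ is produced, the remaining passage to the limit and Jensen computation are routine.
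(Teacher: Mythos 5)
Your reduction is the same as the paper's: by Theorem~\ref{thm:Nfswap} it suffices to make the boundary averages $\frac{1}{2T}\int_{-T}^T \log|f_\xi(i\tau)|\,d\tau$ oscillate, and your Jensen-type computation of $\int\log|(\xi-w)/(1-\overline{\xi}w)|$ against the uniform measure on a circle of radius $r$ is correct. The problem is that the entire substance of the theorem --- the construction of a single $f$ in $\mathscr{H}^\infty$ whose boundary modulus spends long stretches near $1$ and long stretches near a small constant --- is exactly the step you leave open (``achieving this likely requires \dots an auxiliary mechanism''). Theorem~\ref{thm:SSconstruct} by itself gives no control over which regime dominates the window $[-T,T]$ for a given $T$, let alone over the angular distribution of $f(i\tau)$, so as written the argument has a genuine gap at its core: no function with the required oscillating empirical distributions is produced.

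Moreover, you have made the missing construction harder than necessary by demanding weak convergence of the empirical measures $\mu_T$ to uniform measures on two concentric circles. The paper avoids angular information entirely: Lemma~\ref{lem:logxi} gives pointwise two-sided bounds on $\log|f_\xi(i\tau)|$ purely in terms of $|f(i\tau)|$ (zero when $|f(i\tau)|=1$, and pinched between two negative constants times $(1-|\xi|^2)/2$ when $|f(i\tau)|$ is a small constant), so one never needs to identify the limiting value of the average, only to separate a limsup from a liminf. The oscillation of the modulus itself is arranged by applying Theorem~\ref{thm:SSconstruct} to one carefully built open set $U\subset\mathbb{T}^2$: take shrinking open neighborhoods $U_n$ of the segments $\mathscr{L}_n=\{(2^{-i\tau},3^{-i\tau}):|\tau|<n\}$ with $m_2(U_n)$ summably small and negligible boundary, set $V_n=U_1\cup\dots\cup U_n$, and let $U$ be the union of the ``odd'' annular differences $W_k=V_{n_{k+1}}\setminus\overline{V_{n_k}}$, where the times $n_k$ are chosen via the ergodic theorem \eqref{eq:ergodicopen} so that the Kronecker flow up to time $n_{k+1}$ spends proportion at most $\delta$ in $V_{n_k}$. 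Then for $T=n_{2k+1}$ almost all of $[-T,T]$ has $|f(i\tau)|$ equal to the small constant, while for $T=n_{2k}$ almost all of it has $|f(i\tau)|=1$, and the two-sided bounds of Lemma~\ref{lem:logxi} give the desired gap between limsup and liminf for every $\xi$ with $\varepsilon<|\xi|<1$, after which Theorem~\ref{thm:Nfswap} transfers the failure of convergence to $N_f(\xi,T)$ quasi-everywhere. If you want to salvage your version, you would have to supply an analogous alternating mechanism; the equidistribution of the argument you invoke is neither provided by the Saksman--Seip construction nor needed.
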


There is nothing special about the assertion that $f(+\infty)=0$ in Theorem~\ref{thm:largelim} or the role of $\xi = 0$ in Theorem~\ref{thm:nolim}: we have made these choices to simplify the expressions as much as possible. In any case, it is not possible to improve \eqref{eq:largelim1} close to $\xi = f(+\infty)$, since
\[\mathscr{M}_f(\xi) \sim -\log|\xi-f(+\infty)|\]
as $\xi \to f(+\infty)$ for any non-constant Dirichlet series $f$ mapping $\mathbb{C}_0$ to $\mathbb{D}$. However, it would be interesting to see a version of Theorem~\ref{thm:nolim} with $\varepsilon=0$, necessitating a more elaborate construction than ours.

Theorems~\ref{thm:SS}, \ref{thm:LPswap}, \ref{thm:largelim}, and \ref{thm:nolim} demonstrate that various formulas in $\mathscr{H}^p$ theory hold only when the $T$-limit is computed before the $\sigma_0$-limit. As discussed above, this is because the boundary values \eqref{eq:bv} do not retain the almost periodicity of the functions in question. However, if we consider the larger set of generalized boundary values (see Section~\ref{sec:bvbv}), then the corresponding limits may be interchanged. See Theorem~\ref{thm:Hpchiswap}, Theorem~\ref{thm:LPchiswap}, and Theorem~\ref{thm:mcfchiswap} below for the precise statements.

Theorem~\ref{thm:largelim} and Theorem~\ref{thm:nolim} resolve \cite{BP2021}*{Problem~1}. We close this introduction by mentioning that the recent papers \cite{Bayart2024} and \cite{BK2024} have made progress on, respectively, the two other related problems \cite{BP2021}*{Problem~3} and \cite{BP2021}*{Problem~2}.

\subsection*{Organization} The present paper is divided into five further sections, the two first of which are of a somewhat preliminary nature. In Section~\ref{sec:bvbv} we collate some known results about the two types of boundary values a function in $\mathscr{H}^\infty$ has and set out their interactions through the ergodic theorem. As mentioned above, Section~\ref{sec:zeroset} contains some old and new results on the zero sets of functions in $\mathscr{H}^\infty$. The Hardy--Stein identity and the Littlewood--Paley formula is the main topic of Section~\ref{sec:LP}. It is here the proofs of Theorem~\ref{thm:HS}, Corollary~\ref{cor:LP}, and Theorem~\ref{thm:LPswap} may be found. Section~\ref{sec:jensen} contains some expository material on the mean counting function from \cite{BP2021} and culminates with the proof of Theorem~\ref{thm:Nfswap}. The final Section~\ref{sec:proofproof} is devoted to the proofs of Theorem~\ref{thm:largelim} and Theorem~\ref{thm:nolim}.

\section{Boundary values and boundary values} \label{sec:bvbv} 
For real numbers $\tau$, consider the \emph{vertical translation}
\[V_\tau f(s) = f(s+i\tau).\]
If $f$ is in $\mathscr{H}^\infty$ and if $(\tau_k)_{k\geq1}$ is a sequence of real numbers, then improved Montel theorem for $\mathscr{H}^\infty$ due to Bayart \cite{Bayart2002}*{Lemma~18} asserts that we may pass to a subsequence of $(\tau_k)_{k\geq1}$ to ensure that $V_{\tau_k}f$ converges uniformly in $\mathbb{C}_\kappa$ for every $\kappa>0$ to a function $g$ in $\mathscr{H}^\infty$ with $\|g\|_{\mathscr{H}^\infty}=\|f\|_{\mathscr{H}^\infty}$. We will call a Dirichlet series $g$ obtained in this way a \emph{vertical limit function} of $f$.

A \emph{character} $\chi$ is a completely multiplicative map from $\mathbb{N}$ to the unit circle $\mathbb{T}$. The set of characters can be identified with the infinite-dimensional torus
\[\mathbb{T}^\infty = \mathbb{T} \times \mathbb{T} \times \mathbb{T} \times \cdots,\]
since $\chi$ is determined by its value at the prime numbers. As $\mathbb{T}^\infty$ is a compact abelian group under coordinate-wise multiplication, it comes with a unique normalized Haar measure that we will denote $m_\infty$.

The following description of the vertical limit functions is a consequence of Kronecker's theorem (see e.g. \cite{HLS1997}*{Section~2.3} or \cite{QQ2020}*{Section~2.2.3}).
\begin{lemma}\label{lem:vlf} 
	The vertical limit functions of $f$ of the form \eqref{eq:diriseri} in $\mathscr{H}^\infty$ coincide with the functions
	\[f_\chi(s) = \sum_{n=1}^\infty a_n \chi(n) n^{-s},\]
	where $\chi$ is a character. Moreover, $\|f_\chi\|_{\mathscr{H}^\infty} = \|f\|_{\mathscr{H}^\infty}$ for every character $\chi$. 
\end{lemma}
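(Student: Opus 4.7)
The plan is to exploit the direct correspondence between vertical translations and characters. For each real $\tau$, the map $\chi_\tau \colon n \mapsto n^{-i\tau}$ is completely multiplicative and unimodular, hence a character, and the identity $n^{-(s+i\tau)} = n^{-i\tau} n^{-s}$ shows that $V_\tau f = f_{\chi_\tau}$. Kronecker's theorem, applied to the $\mathbb{Q}$-linearly independent set $\{\log p : p \text{ prime}\}$, tells us that $\{(\chi_\tau(p))_p : \tau \in \mathbb{R}\}$ is dense in $\mathbb{T}^\infty$; equivalently, for any character $\chi$ there exists a sequence $(\tau_k)$ with $\chi_{\tau_k}(n) \to \chi(n)$ for every $n \geq 1$.

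To prove that each $f_\chi$ is a vertical limit function, I would first fix $\kappa_0$ large enough that the Dirichlet series of $f$ converges absolutely in $\mathbb{C}_{\kappa_0}$. With $(\tau_k)$ chosen as above, dominated convergence yields $V_{\tau_k}f(s) \to \sum_n a_n \chi(n) n^{-s}$ pointwise in $\mathbb{C}_{\kappa_0}$. To promote this to uniform convergence on $\mathbb{C}_\kappa$ for every $\kappa>0$, I would invoke Bayart's Montel-type theorem cited in the preceding discussion to pass to a subsequence converging uniformly on each $\mathbb{C}_\kappa$ to some $g \in \mathscr{H}^\infty$; since $g$ must coincide with the pointwise limit on $\mathbb{C}_{\kappa_0}$, analytic continuation gives $g = f_\chi$ throughout the right half-plane. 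In particular $f_\chi$ belongs to $\mathscr{H}^\infty$ and is a vertical limit function of $f$. The converse direction is dual: given any vertical limit $g = \lim V_{\tau_k} f$, compactness of $\mathbb{T}^\infty$ lets me pass to a subsequence along which $\chi_{\tau_k}$ converges to some character $\chi$, the previous argument then identifies the vertical limit as $f_\chi$, and uniqueness of the limit forces $g = f_\chi$.

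For the norm equality, uniform convergence of $V_{\tau_k}f$ to $f_\chi$ on each $\mathbb{C}_\kappa$ gives $\|f_\chi\|_{\mathscr{H}^\infty} \leq \|f\|_{\mathscr{H}^\infty}$. The reverse inequality follows by symmetry, since the conjugate character $\overline{\chi}$ satisfies $(f_\chi)_{\overline{\chi}} = f$, so applying the same construction to $f_\chi$ exhibits $f$ itself as a vertical limit of $f_\chi$. The main obstacle I anticipate is the transition from pointwise convergence on a half-plane of absolute convergence to uniform convergence on every $\mathbb{C}_\kappa$ with $\kappa > 0$; Bayart's improved Montel theorem is the essential tool, and the key point is that any subsequential uniform limit it produces can be identified with $f_\chi$ by means of the pointwise limit already computed deep inside the half-plane.
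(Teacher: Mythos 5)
Your argument is correct and is essentially the standard Kronecker-theorem proof that the paper itself invokes (it gives no proof, citing the references for exactly this argument): density of the flow $\tau\mapsto(p^{-i\tau})_p$ in $\mathbb{T}^\infty$, dominated convergence in a half-plane of absolute convergence, and Bayart's Montel-type theorem to identify the uniform limit, with the conjugate-character symmetry handling the norm equality. Note that the norm preservation is in any case already contained in the statement of Bayart's lemma as quoted in Section~2, so your symmetry step could simply be replaced by a citation.
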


Since $f_\chi$ is in $\mathscr{H}^\infty$ for every $\chi$ on $\mathbb{T}^\infty$, we get from \eqref{eq:bv} that the limit of $f_\chi(\sigma+i\tau)$ as $\sigma \to 0^+$ exists for almost every $\tau$. We find it natural to reformulate this statement by a standard argument involving Fubini's theorem (see e.g.~\cite{SS2009}*{Theorem~2}). 
\begin{lemma}\label{lem:chibv} 
	If $f$ is in $\mathscr{H}^\infty$, then the limit
	\[f^\ast(\chi) = \lim_{\sigma\to 0^+} f_\chi(\sigma)\]
	exists for almost every $\chi$ in $\mathbb{T}^\infty$. 
\end{lemma}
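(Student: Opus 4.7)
The plan is to deduce the lemma from the ordinary boundary-value statement \eqref{eq:bv} applied pointwise to each vertical limit function $f_\chi$, and then to transfer that almost-everywhere statement on $\mathbb{R}$ to the compact group $\mathbb{T}^\infty$ by averaging along the Kronecker flow. The underlying mechanism is the symmetry between vertical translation in the Dirichlet half-plane and rotation by $\chi_\tau$ on $\mathbb{T}^\infty$, together with the invariance of Haar measure $m_\infty$.

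First I would introduce the continuous $\mathbb{R}$-action $T_\tau \chi = \chi \cdot \chi_\tau$ on $\mathbb{T}^\infty$, where $\chi_\tau$ is the character $\chi_\tau(n) = n^{-i\tau}$. A direct calculation with the series \eqref{eq:diriseri} yields the intertwining identity
\[f_\chi(s+i\tau) = f_{T_\tau \chi}(s),\]
valid for every $\tau \in \mathbb{R}$, $\chi \in \mathbb{T}^\infty$ and $s \in \mathbb{C}_0$. In particular, the radial limit $\lim_{\sigma\to 0^+} f_\chi(\sigma+i\tau)$ exists at a point $\tau$ if and only if $\lim_{\sigma\to 0^+} f_{T_\tau\chi}(\sigma)$ exists. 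I would then verify that
\[A = \Bigl\{\chi \in \mathbb{T}^\infty : \lim_{\sigma\to 0^+} f_\chi(\sigma) \text{ exists}\Bigr\}\]
is $m_\infty$-measurable. By Bohr's theorem, $\sum_n a_n \chi(n) n^{-\sigma}$ converges uniformly on $\mathbb{T}^\infty \times [\kappa_0,\infty)$ for every $\kappa_0>0$, so $(\chi,\sigma) \mapsto f_\chi(\sigma)$ is jointly continuous on $\mathbb{T}^\infty \times (0,\infty)$. The Cauchy criterion for $A$ can then be phrased as a countable Boolean combination, indexed over rational $\sigma_1,\sigma_2 > 0$ and integers $k,m \geq 1$, of open sets of the form $\{\chi : |f_\chi(\sigma_1) - f_\chi(\sigma_2)| < 1/k\}$, and this exhibits $A$ as a Borel set.

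Finally I would invoke Fubini's theorem. For every fixed $\chi$, Lemma~\ref{lem:vlf} gives $f_\chi \in \mathscr{H}^\infty$, so \eqref{eq:bv} applied to $f_\chi$ yields that $\{\tau \in \mathbb{R}: T_\tau\chi \in A\}$ has full Lebesgue measure. Fubini, applied to the indicator of $\{(\chi,\tau) \in \mathbb{T}^\infty \times [0,1] : T_\tau \chi \in A\}$, then delivers some $\tau_0 \in [0,1]$ with $m_\infty(T_{\tau_0}^{-1}A) = 1$, and Haar invariance of $m_\infty$ under the rotation $T_{\tau_0}$ forces $m_\infty(A) = 1$. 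I expect the most delicate point to be the measurability of $A$, since the defining condition concerns a continuous limit; the reduction to a countable combination of open conditions crucially relies on Bohr's uniform-convergence theorem. The remaining steps constitute a standard ``randomize in the character'' application of Fubini and Haar invariance.
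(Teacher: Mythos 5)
Your argument is correct and is essentially the paper's own route: the paper proves this lemma by exactly the ``standard Fubini argument'' it cites from Saksman--Seip, namely applying \eqref{eq:bv} to each $f_\chi\in\mathscr{H}^\infty$, using the intertwining $f_\chi(s+i\tau)=f_{\chi\mathfrak{p}^{-i\tau}}(s)$, and transferring the a.e.-$\tau$ statement to a.e.-$\chi$ via Fubini and the rotation invariance of $m_\infty$. Your additional care about the Borel measurability of the set where the limit exists (via uniform convergence and a countable Cauchy criterion) is a sound way to make that standard argument fully rigorous.
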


We now have two different sets of boundary values for a function $f$ in $\mathscr{H}^\infty$, namely
\begin{itemize}
	\item the ``natural'' boundary values $f(i\tau)$ for almost every $\tau$ in $\mathbb{R}$; 
	\item the ``generalized'' boundary values $f^\ast(\chi)$ for almost every $\chi$ on $\mathbb{T}^\infty$. 
\end{itemize}

The natural boundary values are a subset of the generalized boundary values, because $f_\chi = V_\tau f$ holds whenever $\chi(p_j)=p_j^{-i\tau}$ for $j=1,2,3,\ldots$. We will write $\chi = \mathfrak{p}^{-i\tau}$ for these characters. Since the generalized boundary values constitute a larger set, it is natural that they are better equipped to fully comprehend the various eccentricities of the function $f$. 

This is, of course, not always the case. The most basic and well-known example is that both sets of boundary values can be used to compute the $\mathscr{H}^\infty$ norm of $f$, i.e. that
\[\sup_{s \in \mathbb{C}_0} |f(s)| = \esssup_{\tau \in \mathbb{R}} |f(i\tau)| = \esssup_{\chi \in \mathbb{T}^\infty} |f^\ast(\chi)|.\]
In both cases, the estimate $\geq$ is trivial while the estimate $\leq$ is obtained using Poisson kernels either in the half-plane or in the polydisc (see e.g. \cite{HLS1997}*{Lemma~2.3}).

Another interesting phenomenon is exemplified by the function
\[f(s) = \exp\left(-\frac{2-2^{-s}-3^{-s}}{2+2^{-s}+3^{-s}}\right).\]
It is not difficult to check that $f$ is continuous in the closed right half-plane $\overline{\mathbb{C}_0}$ and that $f$ belongs to $\mathscr{H}^\infty$. However, the function
\[f^\ast(\chi) = \exp\left(-\frac{2-\chi_1-\chi_2}{2+\chi_1+\chi_2}\right)\]
is not continuous in the point $(\chi_1,\chi_2)=(-1,-1)$. The issue is that this character can only be approached by $\mathfrak{p}^{-i \tau}$ as $\tau \to \infty$ and hence continuity of $f$ does not imply continuity of $f^\ast$. This example is adapted from \cite{CGR2024}*{Proposition~2.9} and we refer to \cite{ABGMN2017}*{Section~2} for further results in this direction.

The interaction between the natural and generalized boundary values can be further explored through the \emph{Kronecker flow} from $\mathbb{R}$ to $\mathbb{T}^\infty$ defined by
\[\tau \mapsto \mathfrak{p}^{-i\tau}.\]
Since the Kronecker flow is ergodic with respect to the Haar measure $m_\infty$ (see e.g.~\cite{CFS1982}*{Section~3.1}), we can apply the ergodic theorem for flows as follows. If $F$ is a continuous function on $\mathbb{T}^\infty$, then
\[\int_{\mathbb{T}^\infty} F(\chi)\,dm_\infty(\chi) = \lim_{T\to\infty} \frac{1}{2T} \int_{-T}^T F(\mathfrak{p}^{-i\tau})\,d\tau.\]
This leads to the following result which, although not explicitly stated in the literature, is well-known.
\begin{lemma}\label{lem:limswapchi1} 
	If $\psi\colon\mathbb{C}\to\mathbb{C}$ is a continuous function and if $f$ is in $\mathscr{H}^\infty$, then
	\[\lim_{\sigma \to 0^+} \lim_{T\to\infty} \frac{1}{2T} \int_{-T}^T (\psi \circ f)(\sigma+it)\,dt = \int_{\mathbb{T}^\infty} (\psi\circ f^\ast)(\chi)\,dm_\infty(\chi).\]
\end{lemma}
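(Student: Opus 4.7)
The plan is to introduce, for each fixed $\sigma>0$, the function $F_\sigma\colon \mathbb{T}^\infty \to \mathbb{C}$ defined by
\[F_\sigma(\chi) = (\psi\circ f_\chi)(\sigma).\]
The key observation is that $F_\sigma$ is \emph{continuous} on $\mathbb{T}^\infty$. Indeed, by Bohr's theorem the Dirichlet series of $f$ converges uniformly in $\mathbb{C}_\kappa$ for every $\kappa>0$, which implies that $\chi \mapsto f_\chi(\sigma)$ is the uniform limit on $\mathbb{T}^\infty$ of the polynomials $\chi\mapsto \sum_{n\leq N} a_n \chi(n) n^{-\sigma}$; composing with the continuous function $\psi$ preserves continuity. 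Noting further that $F_\sigma(\mathfrak{p}^{-i\tau}) = (\psi\circ f)(\sigma+i\tau)$, the ergodic theorem for the Kronecker flow recalled just before the lemma yields
\[\lim_{T\to\infty}\frac{1}{2T}\int_{-T}^T (\psi\circ f)(\sigma+it)\,dt = \int_{\mathbb{T}^\infty} (\psi\circ f_\chi)(\sigma)\,dm_\infty(\chi)\]
for every $\sigma>0$.

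It remains to send $\sigma\to 0^+$ inside the integral on the right. By Lemma~\ref{lem:chibv}, for $m_\infty$-a.e. $\chi$ the limit $f_\chi(\sigma)\to f^\ast(\chi)$ exists as $\sigma\to 0^+$, so by continuity of $\psi$ we have $(\psi\circ f_\chi)(\sigma)\to (\psi\circ f^\ast)(\chi)$ pointwise a.e. The pointwise bound $|f_\chi(\sigma)|\leq \|f\|_{\mathscr{H}^\infty}$ from Lemma~\ref{lem:vlf} confines the integrand to the compact disc $\overline{\mathbb{D}}(0,\|f\|_{\mathscr{H}^\infty})$, on which $\psi$ is bounded by some constant $C$. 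The dominated convergence theorem then finishes the proof.

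The only mildly delicate point is the verification that $F_\sigma$ is continuous on $\mathbb{T}^\infty$, since this is precisely what licenses the invocation of the ergodic theorem in the form recalled in the text (which is stated for continuous functions); but this continuity is immediate from uniform convergence of the Dirichlet series on each half-plane $\mathbb{C}_\kappa$. I do not expect any genuine obstacle.
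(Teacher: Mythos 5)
Your proposal is correct and follows essentially the same route as the paper: continuity of $\chi\mapsto f_\chi(\sigma)$ from uniform convergence of the Dirichlet series, the ergodic theorem for the Kronecker flow at fixed $\sigma$, and dominated convergence (via Lemma~\ref{lem:chibv} and the uniform bound $\|f\|_{\mathscr{H}^\infty}$) to pass to the limit $\sigma\to 0^+$. No gaps.
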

\begin{proof}
	For fixed $\sigma>0$, we let $f_\sigma^\ast$ stand for the function on $\mathbb{T}^\infty$ defined by $\chi \mapsto f_\chi(\sigma)$. Since $f$ converges uniformly in $\mathbb{C}_\kappa$ for every $\kappa>0$ and since $f_\chi$ is a vertical limit function of $f$, it is plain that the function $f_\sigma^\ast$ is continuous on $\mathbb{T}^\infty$. It therefore follows from the ergodic theorem that
	\[\lim_{T\to\infty} \frac{1}{2T} \int_{-T}^T (\psi \circ f)(\sigma+it)\,dt = \int_{\mathbb{T}^\infty} (\psi \circ f_\sigma^\ast)(\chi)\,dm_\infty(\chi).\]
	The stated formula follows from this and the dominated convergence theorem. 
\end{proof}

If $F$ is merely integrable on $\mathbb{T}^\infty$, then the ergodic theorem asserts that the formula
\[\int_{\mathbb{T}^\infty} F(\chi)\,dm_\infty(\chi) = \lim_{T\to\infty} \frac{1}{2T} \int_{-T}^T F(\chi' \mathfrak{p}^{-i\tau})\,d\tau\]
holds for almost every $\chi'$ on $\mathbb{T}^\infty$. The same reasoning as above yields the following.
\begin{lemma}\label{lem:limswapchi2} 
	If $\psi\colon\mathbb{C}\to\mathbb{C}$ is a continuous function and if $f$ is in $\mathscr{H}^\infty$, then
	\[\lim_{\sigma \to 0^+} \lim_{T\to\infty} \frac{1}{2T} \int_{-T}^T (\psi \circ f)(\sigma+it)\,dt = \lim_{T\to\infty} \frac{1}{2T} \int_{-T}^T (\psi\circ f_\chi)(i\tau)\,d\tau\]
	for almost every $\chi$ on $\mathbb{T}^\infty$. 
\end{lemma}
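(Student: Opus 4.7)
The plan is to mirror the proof of Lemma~\ref{lem:limswapchi1}, replacing the ergodic theorem for continuous observables by its $L^1$ form stated just above the lemma. By Lemma~\ref{lem:limswapchi1}, the left-hand side equals $\int_{\mathbb{T}^\infty} (\psi \circ f^\ast)\, dm_\infty$, so the task reduces to showing that for $m_\infty$-almost every $\chi$,
\[
\lim_{T \to \infty} \frac{1}{2T} \int_{-T}^T (\psi \circ f_\chi)(i\tau)\,d\tau = \int_{\mathbb{T}^\infty} (\psi \circ f^\ast)\, dm_\infty.
\]

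Since $|f^\ast| \leq \|f\|_{\mathscr{H}^\infty}$ almost everywhere on $\mathbb{T}^\infty$ and $\psi$ is continuous, the function $F = \psi \circ f^\ast$ is bounded on $\mathbb{T}^\infty$ and hence in $L^1(\mathbb{T}^\infty,m_\infty)$. Applying the $L^1$ ergodic theorem for the Kronecker flow to $F$, I would obtain a set $E \subset \mathbb{T}^\infty$ of full $m_\infty$-measure such that for each $\chi \in E$,
\[
\lim_{T\to\infty} \frac{1}{2T} \int_{-T}^T (\psi \circ f^\ast)(\chi \mathfrak{p}^{-i\tau})\,d\tau = \int_{\mathbb{T}^\infty} (\psi \circ f^\ast)\,dm_\infty.
\]

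The link between the two integrands is provided by the pointwise identity $f_\chi(\sigma + i\tau) = f_{\chi \mathfrak{p}^{-i\tau}}(\sigma)$, valid for every $\sigma > 0$ and $\tau \in \mathbb{R}$, which is immediate from the completely multiplicative character structure in Lemma~\ref{lem:vlf}. Since $f_\chi \in \mathscr{H}^\infty$, the boundary value on the left exists for Lebesgue-almost every $\tau$; at every such $\tau$, the right-hand side then converges as $\sigma\to 0^+$ to $f^\ast(\chi \mathfrak{p}^{-i\tau})$ by the very definition of $f^\ast$. Composing with the continuous function $\psi$ preserves this a.e.\ identification, so the Cesàro averages in the two displays above coincide for every $\chi \in E$, yielding the claim.

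I do not foresee any essential obstacle: the argument is the natural analogue of the proof of Lemma~\ref{lem:limswapchi1}, with the a.e.\ form of the ergodic theorem doing the work that uniform convergence of continuous observables did there. The one point to handle with care is not to conflate the two distinct almost-everywhere statements in play: for each fixed $\chi$, the identification of $f_\chi(i\tau)$ with $f^\ast(\chi \mathfrak{p}^{-i\tau})$ holds for Lebesgue-a.e.\ $\tau \in \mathbb{R}$, whereas the ergodic conclusion holds for $m_\infty$-a.e.\ $\chi \in \mathbb{T}^\infty$; both are used only in directions where they are straightforward.
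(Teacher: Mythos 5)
Your proof is correct and fills in exactly the argument the paper intends: it invokes the $L^1$ ergodic theorem for the Kronecker flow (stated in the paragraph preceding the lemma) applied to the bounded function $\psi\circ f^\ast$, identifies the orbit values $f^\ast(\chi\mathfrak{p}^{-i\tau})$ with the boundary values $f_\chi(i\tau)$ for Lebesgue-a.e.\ $\tau$, and uses Lemma~\ref{lem:limswapchi1} to recognize the space average as the left-hand side. This is the same route the paper takes (the paper simply compresses it to ``the same reasoning as above''), and your care in distinguishing the $m_\infty$-a.e.\ and Lebesgue-a.e.\ statements is exactly the right point to be careful about.
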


It should be noted that the left-hand side of this formula does not change if $f$ is replaced by $f_\chi$ for any $\chi$ on $\mathbb{T}^\infty$. This can be established either by using the fact that $f_\chi$ is a vertical limit function of $f$ or via Lemma~\ref{lem:limswapchi1}. If we consider the specific continuous function $\psi(z)=|z|^p$, then Lemma~\ref{lem:limswapchi1} and Lemma~\ref{lem:limswapchi2} reduce to the following well-known result that should be compared with Theorem~\ref{thm:SS}.
\begin{theorem}\label{thm:Hpchiswap} 
	Fix $1 \leq p<\infty$. If $f$ is in $\mathscr{H}^\infty$, then 
	\begin{align*}
		\|f\|_{\mathscr{H}^p}^p &= \int_{\mathbb{T}^\infty} |f^\ast(\chi)|^p \,dm_\infty(\chi) \intertext{and} \|f\|_{\mathscr{H}^p}^p &= \lim_{T\to \infty} \frac{1}{2T}\int_{-T}^T |f_\chi(i\tau)|^p \,d\tau 
	\end{align*}
	for almost every $\chi$ on $\mathbb{T}^\infty$. 
\end{theorem}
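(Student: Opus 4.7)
The plan is that Theorem~\ref{thm:Hpchiswap} is essentially an immediate corollary of Lemma~\ref{lem:limswapchi1} and Lemma~\ref{lem:limswapchi2} applied to the continuous choice $\psi(z) = |z|^p$, once one unwinds the definitions \eqref{eq:Mpmean} and \eqref{eq:Hpnorm} on the left-hand side. All the real work—extracting ergodicity of the Kronecker flow from Bohr's theorem—has already been packaged into those two lemmas, so the present argument is little more than a rewriting.

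For the first identity, I apply Lemma~\ref{lem:limswapchi1} with $\psi(z) = |z|^p$. By \eqref{eq:Mpmean}, the inner $T$-limit on the left-hand side of the lemma is exactly $M_p^p(f,\sigma)$, and by \eqref{eq:Hpnorm} the outer $\sigma$-limit is $\|f\|_{\mathscr{H}^p}^p$. The right-hand side is $\int_{\mathbb{T}^\infty} |f^\ast(\chi)|^p\,dm_\infty(\chi)$, yielding the first identity. For the second identity, I apply Lemma~\ref{lem:limswapchi2} with the same $\psi$: the left-hand side is again $\|f\|_{\mathscr{H}^p}^p$ by the same reasoning, while the right-hand side is the claimed ergodic average of $|f_\chi(i\tau)|^p$, valid for almost every $\chi \in \mathbb{T}^\infty$.

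The only box to tick is that the hypotheses of the two lemmas genuinely cover this choice of $\psi$. Since $\psi(z) = |z|^p$ is continuous on $\mathbb{C}$ and since $f \in \mathscr{H}^\infty$ forces $\psi \circ f^\ast$ to lie in $L^\infty(\mathbb{T}^\infty) \subset L^1(\mathbb{T}^\infty)$, both the continuous and the integrable versions of the ergodic theorem underpinning Lemma~\ref{lem:limswapchi1} and Lemma~\ref{lem:limswapchi2} apply without modification. There is therefore no real obstacle to overcome; I expect the entire proof to fit in a few lines.
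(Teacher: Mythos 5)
Your proposal is correct and is exactly the argument the paper intends: the paper introduces Theorem~\ref{thm:Hpchiswap} precisely as the specialization of Lemma~\ref{lem:limswapchi1} and Lemma~\ref{lem:limswapchi2} to $\psi(z)=|z|^p$, with the left-hand sides identified with $\|f\|_{\mathscr{H}^p}^p$ via \eqref{eq:Mpmean} and \eqref{eq:Hpnorm}. Your verification that $\psi\circ f^\ast$ is bounded, hence integrable on the probability space $(\mathbb{T}^\infty,m_\infty)$, matches the hypotheses needed, so nothing is missing.
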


\section{On the zero sets of functions in \texorpdfstring{$\mathscr{H}^\infty$}{Hinfty}} \label{sec:zeroset} 
We will write $\mathscr{Z}_f$ for the zero sequence of a nontrivial analytic function $f$, where we repeat the zeros according to multiplicity. The first results will be consequences of the almost periodicity enjoyed by uniformly convergent Dirichlet series, and we will for completeness state these results in some generality. The standard reference for almost periodic functions is Besicovitch~\cite{Besicovitch1955}.

We will state these results with respect to strips
\[\mathbb{S}_{\alpha,\beta} = \{\sigma+it\,:\, \alpha < \sigma < \beta\}.\]
Recall that a complex-valued function $f$ is said to be \emph{almost periodic} in $\mathbb{S}_{\alpha,\beta}$ if there for every $\varepsilon>0$ is a relatively dense set of real numbers $\tau$ such that 
\begin{equation}\label{eq:ap} 
	|V_\tau f(s)-f(s)| \leq \varepsilon 
\end{equation}
for every $s$ in $\mathbb{S}_{\alpha,\beta}$. It follows from results in \cite{Besicovitch1955}*{\S II.2} that uniformly convergent Dirichlet series are almost periodic in any strip where they converge uniformly. In particular, elements of $\mathscr{H}^\infty$ are uniformly continuous in $\mathbb{S}_{\alpha,\beta}$ for every $0<\alpha<\beta$. (In fact, they satisfy \eqref{eq:ap} in $\mathbb{C}_\kappa$ for every $\kappa>0$.)

It is not difficult to show (see e.g. \cite{OS2008}*{Section~4}) that if $f$ is analytic and almost periodic in $\mathbb{S}_{\alpha,\beta}$, then $f$ either does not vanish in $\mathbb{S}_{\alpha,\beta}$ or there is a number $T$ such that $f$ vanishes in the rectangle
\[(\alpha,\beta) \times [\tau,\tau+T]\]
for every $\tau$. In particular, the function either has no zeros in $\mathbb{S}_{\alpha,\beta}$ or it has an infinite number of zeros in $\mathbb{S}_{\alpha,\beta}$. Note that this does not hold in the boundary of the domain of almost periodicity: the reciprocal of the Riemann zeta function has only one zero in $\overline{\mathbb{C}_1}$. We require some control over the distribution of these zeros and the behavior of $f$ near them.

We begin by recalling three results of Bohr and Jessen~\cite{BJ1930}. A straightforward account of these result can be found in \cite{Levin1980}*{Chapter~VI}.
\begin{lemma}\label{lem:BJ1} 
	Let $f$ be a nontrivial, analytic, and almost periodic function in $\mathbb{S}_{\alpha,\beta}$. For every $\delta>0$, there is a number $m=m(f,\delta)>0$ such that if $s$ lies in the strip $\mathbb{S}_{\alpha+\delta,\beta-\delta}$ and $\dist(s,\mathscr{Z}_f) \geq \delta$, then
	\[|f(s)| \geq m.\]
\end{lemma}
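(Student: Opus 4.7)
The plan is to argue by contradiction via a normal families argument combined with Hurwitz's theorem, where almost periodicity enters in order to secure a nontrivial limit.

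Suppose the conclusion fails. Then there is a sequence $s_n = \sigma_n + it_n$ in $\mathbb{S}_{\alpha+\delta,\beta-\delta}$ with $\dist(s_n,\mathscr{Z}_f) \geq \delta$ while $f(s_n) \to 0$. Passing to a subsequence, I may assume $\sigma_n \to \sigma^* \in [\alpha+\delta, \beta-\delta]$. Consider the vertical translates $f_n(s) = V_{t_n}f(s) = f(s+it_n)$. Since $f$ is almost periodic on $\mathbb{S}_{\alpha,\beta}$, it is bounded on every substrip $\mathbb{S}_{\alpha',\beta'}$ with $\alpha < \alpha' < \beta' < \beta$, and by vertical invariance of these substrips the same bound applies uniformly to every $f_n$. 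Montel's theorem then furnishes a further subsequence along which $f_n \to g$ locally uniformly on $\mathbb{S}_{\alpha,\beta}$, with $g$ analytic.

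The central use of almost periodicity is to rule out $g \equiv 0$. The orbit $\{V_t f : t \in \mathbb{R}\}$ is precompact in the topology of uniform convergence on $\mathbb{S}_{\alpha',\beta'}$ (this is precisely the Bochner characterization of Bohr almost periodicity), and $g$ belongs to the closure of this orbit. Applying precompactness again to the sequence $(V_{-t_n}g)$ and passing to a further subsequence, I recover $f$ in the limit, because
\[\bigl\|V_{-t_n}g - f\bigr\|_{\mathbb{S}_{\alpha',\beta'}} = \bigl\|g - V_{t_n}f\bigr\|_{\mathbb{S}_{\alpha',\beta'}} \longrightarrow 0.\]
Consequently $\sup_{\mathbb{S}_{\alpha',\beta'}} |g| = \sup_{\mathbb{S}_{\alpha',\beta'}} |f|$, which is positive since $f$ is nontrivial and analytic on a connected domain.

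To close, observe that $g(\sigma^*) = \lim_n f_n(\sigma_n) = \lim_n f(s_n) = 0$, using the equicontinuity of $(f_n)$ near $\sigma^*$ inherent in locally uniform convergence. Since $g \not\equiv 0$ vanishes at $\sigma^*$, Hurwitz's theorem ensures that for every $0<\rho<\delta$, each $f_n$ (for $n$ sufficiently large) has a zero in the disc $D(\sigma^*, \rho)$. Translating back, $f$ has a zero at a point $z_n + it_n$ with $|z_n + it_n - s_n| \leq \rho + |\sigma^*-\sigma_n| < \delta$ for large $n$, contradicting $\dist(s_n, \mathscr{Z}_f) \geq \delta$. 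The main obstacle is the step producing a nonzero limit $g$: without almost periodicity, a limit of vertical translates could collapse to zero and Hurwitz would yield nothing, so the entire argument rests on the compactness of the hull of $f$, which is the structural substance of Bohr's definition.
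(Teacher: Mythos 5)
Your argument is correct, and it is worth noting that the paper itself does not prove this lemma at all: it is quoted from Bohr--Jessen, with Levin's book cited for an accessible account, and the classical treatment there rests on exactly the kind of compactness-of-translates reasoning you use. Your chain of steps checks out: boundedness of $f$ on closed substrips follows from almost periodicity plus continuity, so Montel applies to the vertical translates $V_{t_n}f$; the Bochner characterization (valid on interior substrips, where $f$ is also uniformly continuous by Cauchy estimates) upgrades a further subsequence to uniform convergence on $\mathbb{S}_{\alpha',\beta'}$, and your identity $\|V_{-t_n}g-f\|_{\mathbb{S}_{\alpha',\beta'}}=\|g-V_{t_n}f\|_{\mathbb{S}_{\alpha',\beta'}}$ (using vertical invariance of the substrip) gives $\sup|g|=\sup|f|>0$, ruling out $g\equiv 0$ --- in fact, once you have uniform convergence on the substrip this identity alone does the job, so the second appeal to precompactness for $(V_{-t_n}g)$ is redundant. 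Two cosmetic points: $g(\sigma^*)=0$ indeed follows since $f_n(\sigma_n)=f(s_n)\to 0$ and $\sigma_n\to\sigma^*$ under locally uniform convergence; and in the Hurwitz step you should choose $\rho<\delta/2$, say, with $g$ nonvanishing on the circle $|z-\sigma^*|=\rho$ (possible for arbitrarily small $\rho$ since the zeros of $g\not\equiv 0$ are isolated), after which the zero $z_n+it_n$ of $f$ satisfies $|z_n+it_n-s_n|\leq\rho+|\sigma^*-\sigma_n|<\delta$ for large $n$, contradicting $\dist(s_n,\mathscr{Z}_f)\geq\delta$. With these small touch-ups your proof is a complete, self-contained substitute for the citation.
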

\begin{lemma}\label{lem:BJ2} 
	Let $f$ be a nontrivial, analytic, and almost periodic function in $\mathbb{S}_{\alpha,\beta}$. For every $\delta>0$, there is a number $N=N(f,\delta)$ such that $f$ has at most $N$ zeros, counting multiplicities, in any rectangle
	\[[\alpha+\delta,\beta-\delta]\times[\tau,\tau+1].\]
\end{lemma}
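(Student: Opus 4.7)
The plan is to proceed by contradiction using a normal families argument, with the almost periodicity of $f$ supplying the crucial nontriviality of the limit. Assume, for contradiction, that there is a sequence $\tau_n \in \mathbb{R}$ such that the translates $f_n := V_{\tau_n} f$ each possess more than $n$ zeros (counted with multiplicity) in the fixed rectangle $R_0 := [\alpha+\delta, \beta-\delta] \times [0,1]$. Since $f$ is almost periodic and hence bounded on $\mathbb{S}_{\alpha,\beta}$, the family $\{f_n\}$ is uniformly bounded on $\mathbb{S}_{\alpha+\delta/2, \beta-\delta/2}$, and Montel's theorem lets me pass to a subsequence (still denoted $f_n$) with $f_n \to g$ locally uniformly on $\mathbb{S}_{\alpha+\delta/2, \beta-\delta/2}$, where $g$ is analytic.

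The crux, and what I expect to be the main obstacle, is to show that $g \not\equiv 0$, since normality alone only bounds the $f_n$. For this I would fix $s_0 \in \mathbb{S}_{\alpha+\delta/2, \beta-\delta/2}$ with $c_0 := |f(s_0)| > 0$ and invoke the Bohr definition of almost periodicity: the set $E$ of $(c_0/2)$-almost periods of $f$ is relatively dense, with gap length bounded by some $L = L(c_0/2)$. For each $n$ I pick $\tau_n^\ast \in E$ with $|\tau_n^\ast - \tau_n| \leq L$, so that $|f(s_0 + i\tau_n^\ast)| \geq c_0/2$. Setting $\sigma_n := \tau_n^\ast - \tau_n \in [-L, L]$ and extracting a further subsequence with $\sigma_n \to \sigma^\ast$, local uniform convergence gives
\[|g(s_0 + i\sigma^\ast)| = \lim_{n\to\infty} |f_n(s_0 + i\sigma_n)| = \lim_{n\to\infty} |f(s_0 + i\tau_n^\ast)| \geq c_0/2 > 0,\]
so $g$ is nontrivial.

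To conclude, I would select a closed rectangle $K$ with $R_0 \subset K \subset \mathbb{S}_{\alpha+\delta/2, \beta-\delta/2}$ such that $g$ is nonvanishing on $\partial K$; such $K$ exists because the zeros of the nontrivial analytic function $g$ form a discrete set, so generic placements of the four boundary segments will avoid them. Hurwitz's theorem then implies that for all sufficiently large $n$, $f_n$ has the same (finite) number of zeros in $K$ as $g$, which contradicts the assumption that $f_n$ has more than $n$ zeros in $R_0 \subset K$. Lemma~\ref{lem:BJ1} is not directly invoked here; it plays its role instead when locating points $s_0$ with $|f(s_0)|$ bounded below if one wishes to make the whole argument quantitative rather than by contradiction.
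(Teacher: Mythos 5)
Your proof is correct, and since the paper does not prove this lemma itself --- it is quoted as one of three results of Bohr and Jessen, with Levin's book cited for an account --- your normal-families argument is a legitimate self-contained alternative. The two points that genuinely matter are exactly the ones you isolate: uniform boundedness of the translates $V_{\tau_n}f$ on an interior substrip, and nontriviality of the Montel limit $g$, which you secure via the relatively dense set of $(c_0/2)$-almost periods; the diagonal evaluation $f_n(s_0+i\sigma_n)=f(s_0+i\tau_n^\ast)$ takes place along points confined to a fixed compact subset of $\mathbb{S}_{\alpha+\delta/2,\beta-\delta/2}$, so locally uniform convergence does yield $|g(s_0+i\sigma^\ast)|\geq c_0/2$, and the Rouch\'e/Hurwitz count on a rectangle $K$ whose boundary avoids the countable zero set of $g$ (chosen so that $R_0$ lies in the interior of $K$) delivers the contradiction. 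One sentence should be repaired: almost periodicity does not give boundedness on all of $\mathbb{S}_{\alpha,\beta}$, since nothing prevents blow-up as $\sigma\to\alpha^+$ or $\sigma\to\beta^-$; it gives boundedness on every closed interior substrip, which is all you use, and which follows from continuity on the compact fundamental domain $[\alpha+\delta/2,\beta-\delta/2]\times[0,L]$ together with $1$-almost periods of gap at most $L$. For comparison, the classical Bohr--Jessen/Levin treatment is also a compactness argument, but it typically runs through Bochner's criterion: a subsequence of the translates converges uniformly on the whole interior substrip, so the limit has the same supremum as $f$ and its nontriviality is automatic. Your Montel-based version only provides locally uniform convergence, which is precisely why the extra almost-period step is needed --- and you supply it correctly.
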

\begin{lemma}\label{lem:BJ3} 
	Let $f$ be a nontrivial, analytic, and almost periodic function in $\mathbb{S}_{\alpha,\beta}$. For $\delta>0$ and $\tau$ in $\mathbb{R}$, set
	\[R_{\delta}(\tau) = [\alpha+\delta/2,\beta-\delta/2]\times[\tau-1/2,\tau+3/2].\]
	For every $\delta>0$, there is $c=c(f,\delta)>0$ such that if $s$ is in $[\alpha+\delta,\beta-\delta]\times[\tau,\tau+1]$, then
	\[|f(s)| \geq c \prod_{\xi \in \mathscr{Z}_f \cap R_\delta(\tau)} |s-\xi|,\]
	where the zeros are repeated according to their multiplicity. 
\end{lemma}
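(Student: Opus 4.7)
The plan is to apply the minimum modulus principle after dividing out the zeros of $f$ in $R_\delta(\tau)$. Set
\[h(s) = \prod_{\xi \in \mathscr{Z}_f \cap R_\delta(\tau)} (s-\xi)\]
and $g = f/h$; then $g$ extends to an analytic and nonvanishing function on $R_\delta(\tau)$, since the zeros of $h$ counted with multiplicity are precisely the zeros of $f$ inside $R_\delta(\tau)$. Applying Lemma~\ref{lem:BJ2} to the two unit-height strips that cover $R_\delta(\tau)$ in the imaginary direction yields a bound $N = N(f,\delta)$, independent of $\tau$, on the number of zeros of $f$ in $R_\delta(\tau)$; in particular $\deg h \leq N$.

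The main step is to select an offset $r$ so that the boundary of
\[R_r = [\alpha+\delta-r,\,\beta-\delta+r] \times [\tau-r,\,\tau+1+r]\]
is uniformly separated from $\mathscr{Z}_f$. A pigeonhole argument accomplishes this: as $r$ varies in $(0,\min(\delta/2,1/2))$, each zero $\xi \in R_\delta(\tau)$ lies within distance $\eta$ of $\partial R_r$ only for $r$ in a set of one-dimensional Lebesgue measure at most $8\eta$ (the four sides of $\partial R_r$ each contribute at most $2\eta$). Choosing $\eta = \eta(f,\delta) > 0$ small enough that $8N\eta < \min(\delta/2,1/2)$ and $\eta < \delta/2$, there is an admissible $r$ for which $\partial R_r \subset \mathbb{S}_{\alpha+\eta,\beta-\eta}$ and $\dist(\partial R_r, \mathscr{Z}_f) \geq \eta$.

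For this $r$, Lemma~\ref{lem:BJ1} gives $|f(s)| \geq m(f,\eta)$ on $\partial R_r$, while the trivial diameter estimate $|s-\xi| \leq \mathrm{diam}(R_\delta(\tau)) \leq \beta-\alpha+2$ yields $|h(s)| \leq (\beta-\alpha+2)^N$ there. Consequently $|g(s)| \geq m(f,\eta)(\beta-\alpha+2)^{-N}$ on $\partial R_r$, and since $g$ is analytic and nonvanishing on $\overline{R_r}$, the minimum modulus principle propagates this lower bound to all of $R_r$. Because $[\alpha+\delta,\beta-\delta]\times[\tau,\tau+1] \subset R_r$, setting $c = m(f,\eta)(\beta-\alpha+2)^{-N}$, which depends only on $f$ and $\delta$, completes the argument. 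The delicate point is the pigeonhole selection of $r$; everything else is bookkeeping using the earlier two lemmas.
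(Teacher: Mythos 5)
The paper does not prove this lemma at all: it is quoted from Bohr--Jessen, with Levin's book cited for an account, and the classical argument there runs through compactness/normal-family properties of the vertical translates of $f$. Your proposal instead deduces the statement directly from Lemma~\ref{lem:BJ1} and Lemma~\ref{lem:BJ2} by dividing out the zeros, choosing a good contour by a pigeonhole in $r$, and applying the minimum modulus principle to $g=f/h$. This is a legitimate and more quantitative route, and the skeleton is sound: $R_r\subset R_\delta(\tau)$ for $r<\min(\delta/2,1/2)$, so every zero of $f$ in $\overline{R_r}$ is cancelled by $h$ and $g$ is zero-free there; the diameter bound on $h$ and the minimum modulus principle then do exactly what you claim.

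There is one step you need to patch. Your pigeonhole only excludes those $r$ for which $\partial R_r$ comes within $\eta$ of a zero lying \emph{in} $R_\delta(\tau)$, but Lemma~\ref{lem:BJ1} requires $\dist(s,\mathscr{Z}_f)\geq\eta$ with respect to the \emph{entire} zero set. When $r$ is close to $\delta/2$ or to $1/2$, the sides of $\partial R_r$ come arbitrarily close to the boundary of $R_\delta(\tau)$, so a zero of $f$ lying just outside $R_\delta(\tau)$ (within $\eta$ of it) could violate the hypothesis of Lemma~\ref{lem:BJ1} on your chosen contour, and your selection of $r$ does not rule this out. The repair is immediate and stays within your argument: require in addition $\eta\leq\delta/4$ and run the pigeonhole over all zeros in the enlarged rectangle $[\alpha+\delta/2-\eta,\beta-\delta/2+\eta]\times[\tau-1/2-\eta,\tau+3/2+\eta]$, whose number is bounded by a constant $N'=N'(f,\delta)$ independent of $\tau$ by Lemma~\ref{lem:BJ2} applied with parameter $\delta/4$ (cover the rectangle by three unit-height rectangles). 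Choosing $\eta$ with $8N'\eta<\min(\delta/2,1/2)$ then yields an admissible $r$ with $\dist(\partial R_r,\mathscr{Z}_f)\geq\eta$; since $R_r\subset R_\delta(\tau)$, the function $g$ is still zero-free on $\overline{R_r}$, and the rest of your proof goes through verbatim with $c=m(f,\eta)(\beta-\alpha+2)^{-N}$.
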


We will also use the following adaptation of these results.
\begin{lemma}\label{lem:dist} 
	Let $f$ be a nontrivial, analytic, and almost periodic function in $\mathbb{S}_{\alpha,\beta}$. For every $\delta>0$ there are numbers $A=A(f,\delta)$ and $B=B(f,\delta)$ such that
	\[\left|\frac{f'(s)}{f(s)}\right| \leq A + \frac{B}{\dist(s,\mathscr{Z}_f)}\]
	for every $s$ in $\mathbb{S}_{\alpha+\delta,\beta-\delta}$. 
\end{lemma}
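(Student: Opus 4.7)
The strategy is a localized Blaschke-type factorization. Fix $s_0 \in \mathbb{S}_{\alpha+\delta,\beta-\delta}$; we may assume $f(s_0)\neq 0$, since otherwise the asserted bound is vacuous. By Lemma~\ref{lem:BJ2}, the number of zeros of $f$ in the closed disc $\overline{D(s_0,\delta/2)} \subset \mathbb{S}_{\alpha+\delta/2,\beta-\delta/2}$ is bounded uniformly in $s_0$ by some $N=N(f,\delta)$. Fix a constant $\varepsilon=\varepsilon(f,\delta)>0$ with $\varepsilon<\delta/(16N)$. Since the ``forbidden'' radii (those with $\partial D(s_0,R)$ within $\varepsilon$ of a zero of $f$) form a union of at most $N$ intervals of total length $2N\varepsilon<\delta/8$, we may choose a radius $R=R(s_0)\in[\delta/8,\delta/4]$ such that $\partial D(s_0,R)$ lies at distance at least $\varepsilon$ from every zero of $f$.

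On $\overline{D(s_0,R)}$ we factor $f=\Pi\cdot h$, where $\Pi$ is the finite disc Blaschke product for $D(s_0,R)$ whose zeros coincide with those of $f$ in this disc, and $h:=f/\Pi$ is analytic and non-vanishing on $\overline{D(s_0,R)}$. Since $|\Pi|\equiv 1$ on $\partial D(s_0,R)$, we have $|h|=|f|$ there. Lemma~\ref{lem:BJ1} applied with parameter $\varepsilon$ then yields $|f|\geq m$ on $\partial D(s_0,R)$ for some $m=m(f,\varepsilon)>0$, and of course $|f|\leq M$ there, where $M$ is a uniform sup-bound for $f$ on $\mathbb{S}_{\alpha+\delta/2,\beta-\delta/2}$. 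Applying the maximum modulus principle to $h$ and to $1/h$ extends the inequalities $m\leq|h|\leq M$ to all of $\overline{D(s_0,R)}$, and a Cauchy estimate then gives $|h'(s_0)/h(s_0)|\leq M/(mR)\leq 8M/(m\delta)$, a constant depending only on $f$ and $\delta$.

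It remains to estimate $\Pi'/\Pi$. A direct calculation shows that each disc Blaschke factor $b_\xi(z)=R(z-\xi)/(R^2-(z-s_0)\overline{(\xi-s_0)})$ satisfies $b_\xi'(s_0)/b_\xi(s_0)=1/(s_0-\xi)-\overline{(s_0-\xi)}/R^2$. Using $|s_0-\xi|\geq\dist(s_0,\mathscr{Z}_f)$ in the first term, $|s_0-\xi|\leq R$ in the second, and summing over the at most $N$ zeros inside $D(s_0,R)$ yields $|\Pi'(s_0)/\Pi(s_0)|\leq N/\dist(s_0,\mathscr{Z}_f)+8N/\delta$. Combining this with the previous estimate on $h'/h$ through the identity $f'/f=\Pi'/\Pi+h'/h$ proves the lemma, say with $A=8(N+M/m)/\delta$ and $B=N$. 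The single subtle step in the argument is the uniform radius selection in the first paragraph: without shifting the boundary circle off the zeros of $f$ by a controlled amount, we could not uniformly invoke the lower bound from Lemma~\ref{lem:BJ1} along $\partial D(s_0,R)$, and the uniformity of the constants would break down.
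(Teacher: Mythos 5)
Your argument is correct, and it reaches the same conclusion (with $B=N$ and an $A$ of the same character) by a route that differs in its key ingredient. The paper works in a rectangle and divides out the nearby zeros by the simple factors $1/(s-\xi)$, leaning on Lemma~\ref{lem:BJ3} — whose whole point is the lower bound $|f(s)|\geq c\prod|s-\xi|$ — to control the zero-free quotient from below throughout the rectangle, before applying the maximum principle and Cauchy's formula. You avoid Lemma~\ref{lem:BJ3} entirely: you only use Lemma~\ref{lem:BJ2} (to count zeros and fix $\varepsilon$) and Lemma~\ref{lem:BJ1} (for the lower bound on the well-chosen circle $\partial D(s_0,R)$), and you replace the simple factors by the Blaschke factors of the disc $D(s_0,R)$, whose unimodularity on the circle makes the boundary bounds $m\leq|h|=|f|\leq M$ transfer immediately to the zero-free part by the maximum principle applied to $h$ and $1/h$. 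In effect you re-derive the relevant local content of Lemma~\ref{lem:BJ3} from Lemmas~\ref{lem:BJ1} and~\ref{lem:BJ2} via the radius-selection argument, which you correctly flag as the crucial uniformity step; the extra term $\overline{(s_0-\xi)}/R^2$ coming from the Blaschke normalization is harmlessly absorbed into $A$ since $R\geq\delta/8$. What the paper's route buys is brevity given the Bohr--Jessen package; what yours buys is independence from the deepest of the three cited lemmas, at the cost of the circle-avoidance bookkeeping. Two cosmetic points you may wish to tighten: justify in a line that $f$ is bounded on $\mathbb{S}_{\alpha+\delta/2,\beta-\delta/2}$ (standard for analytic almost periodic functions, and used implicitly by the paper as well), and note that if $\delta>1$ the zero count in $\overline{D(s_0,\delta/2)}$ follows from Lemma~\ref{lem:BJ2} after covering the disc by finitely many unit-height rectangles, which only changes the constant $N$.
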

\begin{proof}
	We let $R_{\delta}(\tau)$ be as in Lemma~\ref{lem:BJ3} and form the function
	\[g(s) = f(s) \prod_{\xi \in \mathscr{Z}_f \cap R_{\delta/2}(\tau)} \frac{1}{s-\xi},\]
	where the zeros are repeated according to multiplicity. By Lemma~\ref{lem:BJ2}, there are at most $N=N(f,\delta)$ factors in this product. Lemma~\ref{lem:BJ3} asserts that $|g(s)| \geq c=c(f,\delta)$ for every $s$ in $[\alpha+\delta/2,\beta-\delta/2]\times[\tau,\tau+1]$. We next use the maximum modulus principle on the rectangle
	\[[\alpha+\delta/8,\beta-\delta/8]\times[\tau-1,\tau+2],\]
	to conclude that $|g(s)| \leq \big(\min(\delta/4,1)\big)^{-N} \|f\|_\infty$ inside the rectangle. This means that the function $\log{g}$ is bounded and analytic in $[\alpha+\delta/2,\beta-\delta/2]\times[\tau,\tau+1]$. Setting $A = \big(c\min(\delta/2,1/4)\big)^{-1}\|g\|_\infty$, we get that $|g'(s)| \leq A c$ in $[\alpha+\delta,\beta-\delta]\times[\tau+1/4,\tau+3/4]$ from Cauchy's integral formula. This means that
	\[\left|\frac{f'(s)}{f(s)} - \sum_{\xi \in \mathscr{Z}_f \cap R_{\delta/2}(\tau)} \frac{1}{s-\xi}\right| \leq A\]
	in $[\alpha+\delta,\beta-\delta]\times[\tau+1/4,\tau+3/4]$, which yields the stated estimate with $B=N$. 
\end{proof}

For later reference, we also record the trivial application of the Cauchy integral formula and of the mean value theorem to functions in $\mathbb{S}_{\alpha,\beta}$.
\begin{lemma}\label{lem:cauchy} 
	If $f$ is an analytic function in $\mathbb{S}_{\alpha,\beta}$ and $\delta>0$, then
	\[|f(s)| \leq \frac{2\|f\|_\infty}{\delta} \dist(s,\mathscr{Z}_f)\]
	for every $s$ in $\mathbb{S}_{\alpha+\delta,\beta-\delta}$. 
\end{lemma}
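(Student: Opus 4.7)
The plan is to split on whether $\dist(s,\mathscr{Z}_f)$ exceeds $\delta/2$ or not. When $\dist(s,\mathscr{Z}_f) \geq \delta/2$, including the vacuous case $\mathscr{Z}_f = \emptyset$ with the usual convention $\dist(s,\mathscr{Z}_f) = \infty$, the right-hand side is already at least $\|f\|_\infty$, so the asserted inequality is weaker than the trivial sup bound and nothing is left to prove.

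In the substantive case $\dist(s,\mathscr{Z}_f) < \delta/2$, I would select a zero $\xi \in \mathscr{Z}_f$ with $|s-\xi| = \dist(s,\mathscr{Z}_f)$; such a nearest zero exists because the zero set of a nontrivial analytic function is locally finite in $\mathbb{S}_{\alpha,\beta}$ (and the bound is trivial if $f \equiv 0$). Since $s \in \mathbb{S}_{\alpha+\delta,\beta-\delta}$ and $|s-\xi| < \delta/2$, the entire segment from $\xi$ to $s$ lies in $\mathbb{S}_{\alpha+\delta/2,\beta-\delta/2}$, and for each point $w$ on this segment the closed disc of radius $\delta/2$ centred at $w$ is contained in $\mathbb{S}_{\alpha,\beta}$. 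Cauchy's integral formula over that disc then supplies the uniform bound
\[|f'(w)| \leq \frac{2\|f\|_\infty}{\delta}\]
along the segment. Integrating $f'$ from $\xi$ to $s$ and using $f(\xi) = 0$ yields
\[|f(s)| = \left|\int_\xi^s f'(w)\,dw\right| \leq \frac{2\|f\|_\infty}{\delta}\,|s-\xi| = \frac{2\|f\|_\infty}{\delta}\,\dist(s,\mathscr{Z}_f),\]
which is the desired estimate.

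There is no real obstacle, matching the excerpt's description of the lemma as a trivial consequence of Cauchy's formula and the fundamental theorem of calculus; the only delicate point is to verify that the segment from $\xi$ to $s$, together with its $\delta/2$-neighbourhood used in Cauchy's formula, stays inside the strip $\mathbb{S}_{\alpha,\beta}$, which is exactly what the cutoff at $\delta/2$ is arranged to guarantee.
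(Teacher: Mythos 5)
Your proof is correct and follows exactly the route the paper has in mind (the lemma is stated there without proof as a ``trivial application of the Cauchy integral formula and of the mean value theorem''): a Cauchy estimate $|f'(w)|\leq 2\|f\|_\infty/\delta$ on discs of radius $\delta/2$ inside the strip, integrated along the segment from a nearest zero, with the case $\dist(s,\mathscr{Z}_f)\geq\delta/2$ handled by the trivial bound $|f(s)|\leq\|f\|_\infty$. The case split and the confinement of the relevant zeros to a compact subset of $\mathbb{S}_{\alpha,\beta}$ are exactly the right bookkeeping to make the constant $2/\delta$ come out.
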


Let us emphasize that if $f$ is in $\mathscr{H}^\infty$, then the constants appearing in Lemma~\ref{lem:BJ1}, Lemma~\ref{lem:BJ2}, Lemma~\ref{lem:BJ3}, Lemma~\ref{lem:dist} and Lemma~\ref{lem:cauchy} are the same for $f$ and for $f_\chi$. This is simply because the constants are not effected by vertical translations and because $f_\chi$ can be obtained as a uniform limit of vertical translations. 

In preparation for our next result on the zero set of functions in $\mathscr{H}^\infty$, we recall that Jensen's formula (see e.g.~\cite{Titchmarsh1958}*{\S3.61}) states that if $\varphi$ is an analytic function in the unit disc that does not vanish at the origin, then 
\begin{equation}\label{eq:jensen} 
	\sum_{\substack{z \in \varphi^{-1}(\{0\}) \\
	|z| < r}} \log{\frac{r}{|z|}} = \int_0^{2\pi} \log|\varphi(r e^{i \theta})|\,\frac{d\theta}{2\pi} - \log|\varphi(0)| 
\end{equation}
for every $0<r<1$. Let $H^\infty(\mathbb{C}_0)$ stand for the set of bounded analytic functions in $\mathbb{C}_0$ and write $\|f\|_\infty$ for the norm of $H^\infty(\mathbb{C}_0)$. 
\begin{theorem}\label{thm:Hinftyloc} 
	If $f$ is a function in $H^\infty(\mathbb{C}_0)$ enjoying the property that there are constants $\gamma>0$ and $c>0$ such that $|f(\gamma+i \tau)| \geq c > 0$ for every $\tau$ in $\mathbb{R}$, then 
	\begin{align}
		\sum_{\substack{s \in \mathscr{Z}_f \\
		\tau \leq \mim{s} \leq \tau+1 \\
		0 < \mre{s} < \gamma}} \mre{s} &\leq \frac{4\gamma^2+1}{2\gamma} \log{\frac{\|f\|_\infty}{c}} \label{eq:bloc} \intertext{and} \int_\tau^{\tau+1} \big|\log|f(it)|\big|\,dt &\leq \big|\log{\|f\|_\infty}\big| + \frac{\pi}{2} \frac{4\gamma^2+1}{2\gamma} \log{\frac{\|f\|_\infty}{c}} 
	\label{eq:logloc} \end{align}
	for every $\tau$ in $\mathbb{R}$. 
\end{theorem}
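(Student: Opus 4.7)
The plan is to reduce both inequalities to Jensen's formula \eqref{eq:jensen} via a M\"obius map from $\mathbb{C}_0$ onto $\mathbb{D}$ centered at the point where the lower bound on $|f|$ is given. Specifically, I would fix $\tau \in \mathbb{R}$ and $s_0 = \gamma + i(\tau + 1/2)$, and consider the map $\phi(s) = (s-s_0)/(s+\overline{s_0})$, which sends $\mathbb{C}_0$ conformally onto $\mathbb{D}$, the imaginary axis onto $\partial\mathbb{D}$, and $s_0$ to $0$. A direct calculation gives
\[1 - |\phi(s)|^2 = \frac{4\gamma \mre s}{(\mre s + \gamma)^2 + (\mim s - \tau - 1/2)^2}.\]
The composition $g = f \circ \phi^{-1}$ lies in $H^\infty(\mathbb{D})$ with $\|g\|_\infty = \|f\|_\infty$ and $|g(0)| = |f(s_0)| \geq c$. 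Passing to the limit $r \to 1^-$ in \eqref{eq:jensen} applied to $g$ (a standard Fatou-type argument, valid since $g \in H^\infty(\mathbb{D})$) yields the Blaschke-type estimate
\[\sum_{w \in \mathscr{Z}_g} \log(1/|w|) \leq \log(\|f\|_\infty/c).\]

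To prove \eqref{eq:bloc}, I would translate this estimate back to the half-plane: each $s = \sigma + i\eta \in \mathscr{Z}_f$ with $0 < \sigma < \gamma$ and $\tau \leq \eta \leq \tau + 1$ corresponds to a zero $\phi(s) \in \mathscr{Z}_g$, and the elementary bounds $(\sigma + \gamma)^2 \leq 4\gamma^2$, $(\eta - \tau - 1/2)^2 \leq 1$, combined with $-\log x \geq (1 - x^2)/2$ on $(0,1]$, produce
\[\log(1/|\phi(s)|) \geq \frac{1 - |\phi(s)|^2}{2} \geq \frac{2 \gamma \sigma}{4\gamma^2 + 1}.\]
Summing over such zeros and invoking the Blaschke-type estimate gives \eqref{eq:bloc}.

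For \eqref{eq:logloc}, I would reinterpret Jensen's formula for $g$ via the conformal change of variable $e^{i\theta} = \phi(it)$ as the Poisson--Jensen inequality
\[\int_{-\infty}^\infty \log|f(it)| \, P_\gamma(t - \tau - 1/2) \, dt \geq \log|f(s_0)| \geq \log c,\]
where $P_\gamma(u) = \gamma/(\pi(\gamma^2 + u^2))$. The triangle inequality $\bigl|\log|f|\bigr| \leq \bigl|\log|f/\|f\|_\infty|\bigr| + \bigl|\log\|f\|_\infty\bigr|$ then reduces the problem to the case $\|f\|_\infty = 1$, where $\log|f| \leq 0$ and $\bigl|\log|f|\bigr| = -\log|f|$. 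Since $(t - \tau - 1/2)^2 \leq 1/4$ for $t \in [\tau, \tau+1]$, the Poisson kernel satisfies $P_\gamma(t - \tau - 1/2) \geq 4\gamma/(\pi(4\gamma^2+1))$ on this interval, and so
\[\int_\tau^{\tau+1} \bigl|\log|f(it)|\bigr| \, dt \leq \frac{\pi(4\gamma^2+1)}{4\gamma} \int_{-\infty}^\infty -\log|f(it)| \, P_\gamma(t - \tau - 1/2) \, dt \leq \frac{\pi(4\gamma^2+1)}{4\gamma} \log(1/c),\]
which is \eqref{eq:logloc}. The one technical step I would need to justify carefully is the limit $r \to 1^-$ in Jensen's formula for $g \in H^\infty(\mathbb{D})$; this is routine through the inner-outer factorization, and all the remaining steps are elementary estimates.
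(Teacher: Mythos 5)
Your proposal is correct and follows essentially the same route as the paper: for \eqref{eq:bloc} you transfer Jensen's formula to $\mathbb{C}_0$ via the M\"obius map based at a point on the line $\mre{s}=\gamma$ and use $\tfrac{1}{2}(1-x^2)\leq\log(1/x)$ together with the bound $|s+\overline{s_0}|^2\leq 4\gamma^2+1$, and for \eqref{eq:logloc} you use the triangle inequality plus the half-plane Poisson--Jensen (subharmonicity) inequality at $\gamma+i(\tau+1/2)$ with the pointwise lower bound for the Poisson kernel on $[\tau,\tau+1]$. The only cosmetic differences are your choice of base point (the paper centers at $\gamma+i\tau$ for the first estimate) and the order in which you normalize by $\|f\|_\infty$; the constants come out identically.
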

\begin{proof}
	If $w$ is in $\mathbb{C}_0$, then the function
	\[\psi_w(s) = \frac{s-w}{s+\overline{w}}\]
	is a conformal map from $\mathbb{C}_0$ to $\mathbb{D}$. Suppose that $f$ in $H^\infty(\mathbb{C}_0)$ does not vanish in the point $w$. Using Jensen's formula \eqref{eq:jensen} on the function $\varphi = f \circ \psi_w^{-1}$ with the estimate $\frac{1}{2}(1-x^2) \leq \log{\frac{1}{x}}$ for $0<x \leq 1$, we get
	\[\sum_{s \in \mathscr{Z}_f} \frac{2\mre{w}\mre{s}}{|s+\overline{w}|^2} \leq \log{\|f\|_\infty}-\log|f(w)|.\]
	
	Fix $\tau$ and choose $w = \gamma+i\tau$. We get an upper bound for the right-hand side from the assumption $|f(w)|\geq c$. We get a lower bound for the left-hand side by restricting to the zeros that satisfy $\tau \leq t \leq \tau+1$ and $0 < \mre{s} < \gamma$, and in this range we have $|s+\overline{w}|^2 \leq 4\gamma^2+1$. This completes the proof of \eqref{eq:bloc}.
	
	For the second estimate, we first use the triangle inequality to estimate
	\[\big|\log|f(it)|\big| \leq \big|\log{\|f\|_\infty}\big| - \log \frac{|f(it)|}{\|f\|_\infty}.\]
	To handle the second term, we infer that
	\[-\int_\tau^{\tau+1}\log \frac{|f(it)|}{\|f\|_\infty}\,dt \leq -\pi \frac{\gamma^2 +1/4}{\gamma} \int_{-\infty}^\infty \frac{\gamma}{\gamma^2+(t-3\tau/2)^2} \log \frac{|f(it)|}{\|f\|_\infty}\,\frac{dt}{\pi}.\]
	Since $\log|f|$ is subharmonic in $\mathbb{C}_0$, we get that
	\[\int_{-\infty}^\infty \frac{\gamma}{\gamma^2+(t-3\tau/2)^2} \log \frac{|f(it)|}{\|f\|_\infty}\,\frac{dt}{\pi} \geq \log \frac{|f(\gamma+i3\tau/2)|}{\|f\|_\infty} \geq \log\frac{c}{\|f\|_\infty}. \qedhere\]
\end{proof}

The estimate \eqref{eq:bloc} from Theorem~\ref{thm:Hinftyloc} can be considered as an improved version of \cite{BP2021}*{Lemma~2.4}. See also \cite{BK2024}*{Theorem~4.1} for an alternative approach through the maximum principle for Green's function. 

Theorem~\ref{thm:Hinftyloc} applies to any nontrivial function from $\mathscr{H}^\infty$. Indeed, suppose that $f$ in $\mathscr{H}^\infty$ enjoys the Dirichlet series expansion \eqref{eq:diriseri} and that $N$ is the smallest positive integer such that $a_N\neq0$. By absolute convergence, there is some $\gamma>0$ such that
\[|f(s)| \geq \frac{|a_N|}{2} N^{-\mre{s}}\]
for $\mre{s}\geq \gamma$. This has the following consequence.
\begin{corollary} \label{cor:suplogint}
    If $f$ is a nontrivial function in $\mathscr{H}^\infty$, then
    \[\sup_{\tau\in\mathbb{R}} \int_\tau^{\tau+1} \big|\log|f(it)|\big|\,dt < \infty.\]
\end{corollary}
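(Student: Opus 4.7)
The proof is essentially a direct application of the estimate \eqref{eq:logloc} from Theorem~\ref{thm:Hinftyloc}, exploiting the lower bound $|f(s)| \geq \frac{|a_N|}{2} N^{-\mre s}$ that the authors have just derived in the paragraph preceding the corollary.

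My plan is as follows. First, I would fix a nontrivial $f$ in $\mathscr{H}^\infty$ with Dirichlet series \eqref{eq:diriseri} and let $N$ be the smallest positive integer with $a_N \neq 0$. By the displayed bound in the text, there is some $\gamma>0$ such that
\[|f(\gamma+i\tau)| \geq \tfrac{|a_N|}{2} N^{-\gamma}\]
for every real $\tau$; call this constant $c = \tfrac{|a_N|}{2} N^{-\gamma}>0$. Since $\mathscr{H}^\infty \subset H^\infty(\mathbb{C}_0)$, the hypotheses of Theorem~\ref{thm:Hinftyloc} are satisfied.

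Next, I would directly invoke the estimate \eqref{eq:logloc}: for every $\tau$ in $\mathbb{R}$,
\[\int_\tau^{\tau+1} \big|\log|f(it)|\big|\,dt \leq \big|\log\|f\|_\infty\big| + \frac{\pi}{2} \cdot \frac{4\gamma^2+1}{2\gamma} \log\frac{\|f\|_\infty}{c}.\]
The right-hand side is independent of $\tau$ and finite, so taking the supremum over $\tau$ yields the conclusion.

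There is essentially no obstacle here, since all the work has already been done in Theorem~\ref{thm:Hinftyloc}; the only point to verify is that the $\gamma$ and $c$ needed as input are supplied automatically by the absolute convergence of the Dirichlet series for $\mre s$ sufficiently large, which is precisely what the discussion immediately before the corollary establishes.
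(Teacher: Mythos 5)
Your proposal is correct and follows exactly the paper's own route: the corollary is obtained there by applying the estimate \eqref{eq:logloc} of Theorem~\ref{thm:Hinftyloc} with $c = \tfrac{|a_N|}{2}N^{-\gamma}$, where the lower bound $|f(\gamma+i\tau)| \geq c$ comes from absolute convergence of the Dirichlet series for $\mre{s}\geq\gamma$. Nothing is missing.
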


The problem of determining the functions $\psi$ on $\mathbb{R}$ such that $\psi(\tau)=|f(i\tau)|$ for some $f$ in $\mathscr{H}^\infty$ was raised in \cite{SS2009}*{Question 1}. Corollary~\ref{cor:suplogint} provides a modicum of progress on this problem, since it illustrates that there are functions that may be realized as boundary values of functions in $H^\infty(\mathbb{C}_0)$, but not of functions from $\mathscr{H}^\infty$. One way to construct a function in $H^\infty(\mathbb{C}_0)$ such that the conclusion of Corollary~\ref{cor:suplogint} does not hold is to form a Blaschke product in any strictly bigger half-plane that has zeros of order $k$ at the points $s=i 2^k$ for $k=1,2,3,\ldots$.

Recall that a Blaschke product in $\mathbb{C}_0$ has the form
\[B(s)=\prod_{\alpha \in \mathscr{Z}_B}\frac{1-\overline{\alpha}^2}{|1-\alpha^2|}\frac{s-\alpha}{s+\overline{\alpha}},\]
where the zero set $\mathscr{Z}_B$ satisfies the condition
\[\sum_{\alpha \in \mathscr{Z}_B}\frac{\mre\alpha}{1+(\mim\alpha)^2}<\infty.\]
We will apply Theorem~\ref{thm:Hinftyloc} in combination with the following two results.
\begin{lemma}\label{lem:blaschkeint} 
	If $B$ is a Blaschke product in $\mathbb{C}_0$, then
	\[\left|\mim{\int_0^\gamma} \frac{B'(\sigma+it)}{B(\sigma+it)}\,\sigma\,d\sigma\right| \leq \left(2\pi + \frac{\pi^2}{3} \gamma^2 \right) \sup_{\tau\in\mathbb{R}} \sum_{\substack{\alpha \in \mathscr{Z}_B \\
	\tau \leq \mim{\alpha} \leq \tau+1 }} \mre{\alpha},\]
	for every $\gamma>0$ and zero-free line $\mim{s}=t$. 
\end{lemma}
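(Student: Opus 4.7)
The plan is to expand the logarithmic derivative of $B$ over its zeros, take the imaginary part, and reduce the estimate to a sum over individual zeros. With $a = \mre\alpha$ and $u = t - \mim\alpha$, the expansion
\[\frac{B'(s)}{B(s)} = \sum_{\alpha \in \mathscr{Z}_B}\left(\frac{1}{s-\alpha} - \frac{1}{s+\overline{\alpha}}\right)\]
and a direct calculation of imaginary parts yields
\[\mim\int_0^\gamma \frac{B'(\sigma+it)}{B(\sigma+it)}\,\sigma\,d\sigma = -\sum_{\alpha \in \mathscr{Z}_B} u\,K(\alpha), \qquad K(\alpha) = 4a\int_0^\gamma \frac{\sigma^2\,d\sigma}{((\sigma-a)^2+u^2)((\sigma+a)^2+u^2)}.\]
Note that $K(\alpha) \geq 0$, so $u K(\alpha)$ has the sign of $u$. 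The absolute convergence of the series—and hence the interchange of sum and integral—will follow from the bounds established below.

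The heart of the proof consists of two complementary estimates on $|u K(\alpha)|$. On one hand, the elementary estimates $(\sigma-a)^2 + u^2 \geq u^2$ and $(\sigma+a)^2 + u^2 \geq 2|u|\sigma$ applied under the integral give the quadratic-decay bound
\[|u K(\alpha)| \leq \frac{a\gamma^2}{u^2}.\]
On the other hand, evaluating $K(\alpha)$ as an elementary integral yields, for $u > 0$,
\[u K(\alpha) = \frac{u}{2}\log\frac{(\gamma-a)^2+u^2}{(\gamma+a)^2+u^2} + a\left[\arctan\frac{\gamma-a}{u} + \arctan\frac{\gamma+a}{u}\right].\]
The log term is nonpositive and the arctan bracket is bounded by $\pi$, so $u K(\alpha) \leq \pi a$; combined with $u K(\alpha) \geq 0$ for $u > 0$ (and the reflection $u \mapsto -u$), this gives the $\gamma$-independent bound $|u K(\alpha)| \leq \pi a$.

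To finish, partition the zeros into the horizontal strips $S_n = \{\alpha \in \mathscr{Z}_B : t-\mim\alpha \in (n, n+1]\}$ for $n \in \mathbb{Z}$, each of $\mim\alpha$-width one, so that $\sum_{\alpha \in S_n}\mre\alpha \leq M$ where $M$ denotes the supremum in the statement. For $n \in \{-1, 0\}$, where $|u| \leq 1$, the bound $|u K(\alpha)| \leq \pi a$ contributes at most $2\pi M$. For $n \geq 1$ one has $|u| > n$, and for $n \leq -2$ one has $|u| \geq |n| - 1 \geq 1$; applying $|u K(\alpha)| \leq a\gamma^2/u^2$ contributes at most $2\gamma^2 M \sum_{k \geq 1} k^{-2} = \pi^2\gamma^2 M/3$. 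Summing yields the stated inequality. The delicate point I anticipate is the uniform bound $|u K(\alpha)| \leq \pi a$: the log and arctan contributions could a priori be individually as large as $O(\gamma)$ in absolute value, but their opposing signs combined with the nonnegativity of $u K(\alpha)$ for $u \geq 0$ force the $\gamma$-free bound, and it is this that produces the sharper central constant $2\pi$ in place of a constant that would otherwise grow with $\gamma$.
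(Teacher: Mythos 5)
Your proof is correct and follows essentially the same route as the paper: expand $B'/B$ over the zeros, compute the imaginary part of each term times $\sigma$, and use the dichotomy of a $\gamma$-free bound $\pi\mre\alpha$ for zeros within unit distance of the line and a bound of size $\mre\alpha\,\gamma^2/u^2$ for distant zeros, summed over unit strips. The only cosmetic difference is that you obtain the bound $|uK(\alpha)|\leq\pi\mre\alpha$ by evaluating the integral over $[0,\gamma]$ explicitly and using the signs of the logarithmic and arctangent pieces, whereas the paper uses nonnegativity of the integrand to compare with the full integral over $[0,\infty)$, which equals $\pi\mre\alpha/|u|$; these are equivalent.
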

\begin{proof}
	We write
	\[\frac{B'(s)}{B(s)} = \sum_{\alpha \in \mathscr{Z}_B} \left(\frac{1}{s-\alpha}-\frac{1}{s+\overline{\alpha}}\right),\]
	and consider each term separately. Let us therefore set
	\[I(t,\alpha) = \mim\int_0^\gamma \left(\frac{1}{\sigma+it-\alpha}-\frac{1}{\sigma+it+\overline{\alpha}}\right)\,\sigma\,d\sigma.\]
	There are two cases to consider. In the first case that $|\mim{\alpha}-t| \leq 1$, we write
	\[\left(\frac{1}{\sigma+it-\alpha}-\frac{1}{\sigma+it+\overline{\alpha}}\right)\sigma = \frac{\alpha-it}{\sigma+it-\alpha} + \frac{\overline{\alpha}+it}{\sigma+it+\overline{\alpha}}\]
	and compute 
	\begin{align*}
		\mim{\frac{\alpha-it}{\sigma+it-\alpha}} &= \frac{\sigma(\mim{\alpha}-t)}{(\sigma-\mre{\alpha})^2 + (\mim{\alpha}-t)^2}, \\
		\mim{\frac{\overline{\alpha}+it}{\sigma+it+\overline{\alpha}}} &= \frac{\sigma(t-\mim{\alpha})}{(\sigma+\mre{\alpha})^2 + (\mim{\alpha}-t)^2}. 
	\end{align*}
	This shows that 
	\begin{multline*}
		I(t,\alpha) = (\mim{\alpha}-t) \times \\
		\int_0^\gamma \left(\frac{\sigma}{(\sigma-\mre{\alpha})^2 + (\mim{\alpha}-t)^2} - \frac{\sigma}{(\sigma+\mre{\alpha})^2 + (\mim{\alpha}-t)^2}\right)\,d\sigma. 
	\end{multline*}
	The integrand is nonnegative, so by letting $\gamma \to \infty$ we get that $|I(t,\alpha)| \leq \pi \mre{\alpha}$. In the second case that $k < |\mim{\alpha}-t| \leq {k+1}$ for $k=1,2,3,\ldots$, we estimate crudely
	\[\left|\left(\frac{1}{\sigma+it-\alpha}-\frac{1}{\sigma+it+\overline{\alpha}}\right)\right| = \left|\frac{2\mre{\alpha}}{(\sigma+it-\alpha)(\sigma+it+\overline{\alpha})}\right| \leq \frac{2\mre{\alpha}}{k^2}\]
	to infer that $|I(\alpha,t)| \leq \frac{\gamma^2}{k^2} \mre{\alpha}$. In light of the dominated convergence theorem we can add up the contribution for each $k$ to obtain the stated estimate. 
\end{proof}
\begin{lemma}\label{lem:outer} 
	Suppose that $f$ is a nontrivial bounded analytic function in $\mathbb{C}_0$ and let $F$ denote the outer part of $f$. Then
	\[\left|\mim\int_0^\gamma \frac{F'(\sigma+it)}{F(\sigma+it)} \,\sigma\,d\sigma \right| \leq \left(1+\frac{\pi}{6}\gamma \right) \sup_{t \in \mathbb{R}} \int_t^{t+1} \big|\log|f(i \tau)|\big| \,d\tau \]
	for every $\gamma>0$. 
\end{lemma}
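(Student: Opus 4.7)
The plan is to translate the claim into a convolution estimate on the real line: I will express $\mim\int_0^\gamma (F'/F)\sigma\,d\sigma$ as a convolution of $\log|f|$ against an explicit one-variable kernel, and then bound that convolution using the local $L^1$ hypothesis.

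First I would exploit that $F$ is outer in $\mathbb{C}_0$, hence zero-free, so $\log F$ admits a single-valued analytic branch. The Cauchy--Riemann equations then give $\mim(F'/F) = \partial_\sigma \arg F = -\partial_t \log|F|$, so
\[\mim\int_0^\gamma \frac{F'(\sigma+it)}{F(\sigma+it)}\sigma\,d\sigma = -\partial_t \int_0^\gamma \sigma\log|F(\sigma+it)|\,d\sigma.\]
Using $|F(i\tau)|=|f(i\tau)|$ almost everywhere on the boundary, I would substitute the Poisson integral representation of $\log|F|$, interchange the order of integration (justified by absolute convergence through the local $L^1$ assumption and the $O(|u|^{-3})$ decay of the kernel below), and evaluate the inner integral
\[\int_0^\gamma \frac{\sigma^2}{\sigma^2+u^2}\,d\sigma = \gamma - |u|\arctan(\gamma/|u|).\]
Differentiating in $t$ and writing $u=\tau-t$ yields the identity
\[\mim\int_0^\gamma \frac{F'(\sigma+it)}{F(\sigma+it)}\sigma\,d\sigma = -\frac{1}{\pi}\int_{-\infty}^\infty \log|f(i\tau)|\,K(\tau-t)\,d\tau,\]
with the explicit kernel
\[K(u) = \operatorname{sgn}(u)\arctan(\gamma/|u|) - \gamma u/(\gamma^2+u^2).\]

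Next I would analyze $K$. It is odd, bounded by $\pi/2$, and a direct computation gives $K'(u)=-2\gamma^3/(\gamma^2+u^2)^2$ on $(0,\infty)$, so $K$ is strictly decreasing and, since $K''>0$ there, convex on that half-line. Integration by parts on the arctangent term yields the clean antiderivative $\int_0^a K(u)\,du = a\arctan(\gamma/a)$, and in particular $\int_0^\infty K(u)\,du = \gamma$.

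Finally I would split the $\tau$-integral into $|\tau-t|\leq 1$ and $|\tau-t|>1$. On the first piece, the bound $|K|\leq \pi/2$ together with $\int_{t-1}^{t+1}|\log|f(i\tau)||\,d\tau \leq 2A$ (where $A$ denotes the claimed supremum) contributes at most $A$ after the $1/\pi$ prefactor, producing the ``$1$'' in the final constant. On the second piece I would use the monotonicity of $K$ on each half-line together with its convexity to compare the sum $\sum_{n\geq 1}K(n)$ with half-unit integrals of $K$, combining this with the antiderivative formula to extract the factor $\pi\gamma/6$.

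The main obstacle is recovering the precise constant $\pi/6$: a crude interval-by-interval bound using $\sup_{[n,n+1]}|K|\leq K(n)$ for $n\geq 1$ together with $\sum_{n\geq 1}K(n)\leq \int_0^\infty K = \gamma$ only yields the weaker estimate $1+2\gamma/\pi$. Sharpening this to $1+\pi\gamma/6$ requires exploiting the convexity of $K$ (via the midpoint inequality $K(n)\leq \int_{n-1/2}^{n+1/2}K$) and the closed-form antiderivative $\int_0^a K = a\arctan(\gamma/a)$ to trim the sum against the full $L^1$ mass of $K$.
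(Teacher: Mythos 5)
Your route is the paper's route: represent $\log|F|$ by the Poisson integral of $\log|f(i\cdot)|$, use the Cauchy--Riemann equations to write $\mim(F'/F)=-\partial_t\log|F|$, interchange the integrations, and then split the $\tau$-integral into the unit interval around $t$ (which produces the additive constant $1$) and unit blocks farther away. Your refinement of computing the $\sigma$-integral in closed form is correct: the kernel is indeed $K(u)=\operatorname{sgn}(u)\arctan(\gamma/|u|)-\gamma u/(\gamma^2+u^2)$, with $K'(u)=-2\gamma^3/(\gamma^2+u^2)^2$ and $\int_0^aK=a\arctan(\gamma/a)$, so $\int_0^\infty K=\gamma$. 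Up to this point everything checks out, and your crude conclusion $\bigl(1+\tfrac{2}{\pi}\gamma\bigr)\sup_t\int_t^{t+1}\bigl|\log|f(i\tau)|\bigr|\,d\tau$ is a complete proof of the lemma with that constant.

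The genuine gap is the final constant, and your proposed fix cannot close it. The midpoint--convexity inequality $K(n)\le\int_{n-1/2}^{n+1/2}K(u)\,du$ only improves $\sum_{n\ge1}K(n)\le\gamma$ to $\sum_{n\ge1}K(n)\le\gamma-\tfrac12\arctan(2\gamma)$, i.e.\ it gains a bounded additive amount; since also $\sum_{n\ge1}K(n)\ge\int_1^\infty K(u)\,du=\gamma-\arctan\gamma$, no summation trick of this kind can push the coefficient of $\gamma$ below $\tfrac{2}{\pi}$, which is strictly larger than $\tfrac{\pi}{6}$. In fact you should not chase $\tfrac{\pi}{6}$ at all: the paper obtains it from the pointwise bound $\tfrac{2\sigma^2|u|}{(\sigma^2+u^2)^2}\le\tfrac{1}{2k^2}$ for $k<|u|\le k+1$, which fails as soon as $\gamma$ permits $\sigma$ comparable to $|u|$ (the maximum of the left-hand side over $\sigma>0$ is $\tfrac{1}{2|u|}$, attained at $\sigma=|u|$), and testing the inequality against an outer function whose boundary log-modulus consists of unit-mass spikes of alternating sign at the integers shows the coefficient of $\gamma$ genuinely cannot be taken below $\tfrac{2}{\pi}$ for large $\gamma$. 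So the constant in the statement appears to need correction, and your bound $1+\tfrac{2}{\pi}\gamma$ is the right outcome of the method; it is also all that is used downstream, since the proofs of Theorem~\ref{thm:Nfswap} and Theorem~\ref{thm:limsup} only require some finite bound $C(f_\xi,\gamma)$ independent of $\sigma_0$ and $T$.
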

\begin{proof}
	Since
	\[\log|F(\sigma+it)| = \int_{-\infty}^\infty \frac{\sigma}{\sigma^2+(t-\tau)^2}\,\log|f(i\tau)| \,\frac{d\tau}{\pi}\]
	we get that
	\[\mim \frac{F'(\sigma+it)}{F(\sigma+it)} = -\int_{-\infty}^\infty \frac{2\sigma (t-\tau)}{(\sigma^2+(t-\tau)^2)^2}\,\log|f(i\tau)| \,\frac{d\tau}{\pi}\]
	from the Cauchy--Riemann equations. We get from the triangle inequality and Tonelli's theorem that
	\[\left|\mim\int_0^\gamma \frac{F'(\sigma+it)}{F(\sigma+it)} \,\sigma\,d\sigma \right| \leq \int_{-\infty}^\infty \int_0^\gamma \frac{2\sigma^2 |t-\tau|}{(\sigma^2+(t-\tau)^2)^2}\, d\sigma\, \big|\log|f(i\tau)|\big|\,\frac{d\tau}{\pi}.\]
	If $|t-\tau|\leq 1$, then we estimate
	\[\int_0^\gamma \frac{2\sigma^2 |t-\tau|}{(\sigma^2+(t-\tau)^2)^2 }\, d\sigma \leq \int_0^\infty \frac{2\sigma^2 |t-\tau|}{(\sigma^2+(t-\tau)^2)^2}\, \,d\sigma = \frac{\pi}{2}.\]
	If $k<|t-\tau|\leq k+1$ for some $k=1,2,3,\ldots$, then
	\[\int_0^\gamma \frac{2\sigma^2 |t-\tau|}{(\sigma^2+(t-\tau)^2)^2 }\, d\sigma \leq \frac{1}{2 k^2} \int_0^\gamma 1\,d\sigma = \frac{\gamma}{2k^2},\]
	which completes the proof.
\end{proof}

\section{The Hardy--Stein identity and the Littlewood--Paley formula} \label{sec:LP} 
Following \cite{Stein1933}, we will establish Theorem~\ref{thm:HS} via Green's theorem. Consider a rectangle $R = [\sigma_0,\sigma_1]\times[-T,T]$. If $u$ and $v$ are continuously differentiable functions on $R$, then
\[\iint_R \left(\frac{\partial}{\partial \sigma} u - \frac{\partial}{\partial t} v\right)\,dtd\sigma = \oint_{\partial R} \big(u \,dt + v\,d\sigma\big)\]
where $\partial R$ is oriented counter-clockwise. If $f$ is analytic and does not vanish in $R$, then $|f|^p$ is smooth on $R$ and we could choose $u = -\frac{\partial}{\partial \sigma} |f|^p$ and $v = \frac{\partial}{\partial t} |f|^p$ so that
\[\frac{\partial}{\partial \sigma} u - \frac{\partial}{\partial t} v = -\Delta |f|^p = -p^2 |f|^{p-2} |f'|^2.\]
We could then obtain the stated formula by estimating the integrals on the segment $[\sigma_1-iT,\sigma_1+iT]$ and on the segments $[\sigma_0\pm iT,\sigma_1\pm iT]$ before letting $\sigma_1 \to \infty$, dividing by $2T$, and finally letting $T \to \infty$. To make this argument work for a nontrivial function in $\mathscr{H}^\infty$ that has zeros, we need to make certain adjustments.

The setup is as follows. Fix $1 \leq p<\infty$ and $\sigma_0>0$. We want to find a sequence of functions $(\varphi_j)_{j\geq1}$ enjoying the following properties. 
\begin{enumerate}
	\item[\normalfont (i)] For every $j\geq1$, the function $\varphi_j$ is almost periodic in $\mathbb{C}_{\sigma_0}$ and $\frac{\partial}{\partial \sigma} \varphi_j$ is uniformly continuous in $\mathbb{C}_{\sigma_0}$. 
	\item[\normalfont (ii)] The sequence $(\varphi_j)_{j\geq1}$ converges uniformly to $|f|^p$ in $\mathbb{C}_{\sigma_0}$. 
	\item[\normalfont (iii)] It holds that
	\[\lim_{j\to \infty} \limsup_{T \to \infty} \sup_{\kappa>\sigma_0} \left|\frac{1}{2T} \int_{-T}^T \frac{\partial}{\partial \kappa} \varphi_j(\kappa+it)\,dt +\frac{1}{2T} \int_{\kappa}^\infty \int_{-T}^T \Delta |f(\sigma+it)|^p \,dt d\sigma\right|=0.\]
\end{enumerate}

Before constructing the sequence $(\varphi_j)_{j\geq1}$ and demonstrating that it satisfies the requirements (i)--(iii), let us explain how this would lead to a proof of Theorem~\ref{thm:HS}. The first assertion in the requirement (i) ensures that the mean value
\[\Phi_j(\kappa) = \lim_{T\to\infty} \frac{1}{2T} \int_{-T}^T \varphi_j(\kappa+it)\,dt\]
exists for every $\kappa>\sigma_0$. The second assertion in (i) ensures that $\frac{\partial}{\partial \sigma} \varphi_j$ is almost periodic (see e.g.~\cite{Besicovitch1955}*{p.~6}) and that
\[\frac{\partial}{\partial \kappa} \Phi_j(\kappa) = \lim_{T\to\infty} \frac{1}{2T} \int_{-T}^T \frac{\partial}{\partial \kappa}\varphi_j(\kappa+it)\,dt.\]
The requirement (ii) shows that $\Phi_j(\kappa)$ converges uniformly to $M_p^p(f,\kappa)$ in $\mathbb{C}_0$ as $j\to \infty$. The statement of Theorem~\ref{thm:HS} follows from this and (iii).

Our final preparation for the proof of Theorem~\ref{thm:HS} is the following result, which is essentially due to Hardy \cite{Hardy1915}. The proof is a direct calculation based on the Cauchy--Riemann equations (see \cite{BP2021}*{Lemma~3.4}).
\begin{lemma}\label{lem:hardy} 
	Suppose that $f$ is an analytic function in some domain $\Omega$ and that $\psi \colon \mathbb{R} \to \mathbb{R}$ is a $C^2$ function such that $\supp{\psi'} \cap (-\infty,0]$ is compact. For every $s$ in $\Omega$ it holds that 
	\begin{align*}
		\frac{\partial}{\partial s} \psi(\log{|f(s)|}) &= \psi'(\log|f(s)|) \frac{f'(s)}{f(s)}, \\
		\Delta \psi(\log{|f(s)|}) &= \psi''(\log|f(s)|) \left|\frac{f'(s)}{f(s)}\right|^2, 
	\end{align*}
	where $\frac{\partial}{\partial s} = \frac{\partial}{\partial \sigma} - i \frac{\partial}{\partial t}$ and $\Delta = \frac{\partial^2}{\partial \sigma^2} + \frac{\partial^2}{\partial t^2}$. 
\end{lemma}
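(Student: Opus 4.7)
The plan is to verify both identities pointwise, splitting the domain $\Omega$ into the open subset $\Omega_0 = \{s \in \Omega : f(s) \neq 0\}$ and a neighborhood of each zero of $f$, and handling them separately. On $\Omega_0$ the identities come from a direct chain-rule calculation exploiting that $\log|f|$ is harmonic, while the support condition on $\psi'$ is what makes the formulas meaningful (and trivial) across the zero set of $f$.

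On $\Omega_0$, I would locally choose a branch of $\log f$ so that $g(s) := \log|f(s)| = \mre \log f(s)$ is harmonic. Writing $f = u + iv$ and $g = \tfrac12 \log(u^2+v^2)$, a direct use of the Cauchy--Riemann equations gives $g_\sigma = \mre(f'/f)$ and $g_t = -\mim(f'/f)$, so that $\frac{\partial g}{\partial s} = g_\sigma - i g_t = f'/f$. Because $g$ is real-valued, the ordinary chain rule applied coordinate-wise yields
\[\frac{\partial}{\partial s} \psi(g) = \psi'(g)\left(\frac{\partial g}{\partial \sigma} - i \frac{\partial g}{\partial t}\right) = \psi'(g)\,\frac{f'}{f},\]
which is the first formula. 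For the second, the standard chain rule for the Laplacian of $\psi \circ g$ reads $\Delta \psi(g) = \psi''(g)|\nabla g|^2 + \psi'(g) \Delta g$; here $\Delta g = 0$ by harmonicity, and $|\nabla g|^2 = g_\sigma^2 + g_t^2 = |\partial g/\partial s|^2 = |f'/f|^2$, giving the stated identity on $\Omega_0$.

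The only delicate point is the behavior near zeros of $f$, where $\log|f|$ is not defined as a real number. This is precisely where the hypothesis on $\psi$ is used: since $\supp \psi' \cap (-\infty,0]$ is compact, there exists $C > 0$ with $\psi' \equiv 0$ (and hence $\psi'' \equiv 0$) on $(-\infty,-C]$, so $\psi$ is constant on that half-line. If $f(s_0) = 0$, continuity provides a neighborhood $V$ of $s_0$ on which $\log|f(s)| \leq -C$, so that $\psi(\log|f(s)|)$ is constant throughout $V$ (with the natural extension at $s_0$) and all of its derivatives vanish there; simultaneously $\psi'(\log|f(s)|)$ and $\psi''(\log|f(s)|)$ vanish in $V$, so both formulas read $0 = 0$. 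The main ``obstacle'' is nothing more than this bookkeeping around the zero set; once it is settled, the two identities follow from the straightforward calculation on $\Omega_0$.
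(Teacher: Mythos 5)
Your argument is correct and matches the paper's approach: the paper simply cites a direct calculation based on the Cauchy--Riemann equations (from the earlier work it references), which is exactly your computation on the zero-free set, and your bookkeeping near the zeros via the compact-support hypothesis on $\psi'$ is the intended role of that hypothesis. Nothing further is needed.
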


Let us now proceed with the construction.
\begin{proof}
	[Proof of Theorem~\ref{thm:HS}] Fix $1 \leq p<\infty$ and let $\psi_0 \colon \mathbb{R} \to \mathbb{R}$ be an increasing $C^2$ function satisfying $\psi_0(x) = \exp(x)$ for $x \geq 0$ and $\psi_0(x) = 0$ for $x \leq -1$. For $j=1,2,3,\ldots$, we define
	\[\varphi_j(s) = j^{-1} \psi_0\left(\log{j}+p\log|f(s)|\right).\]
	The sequence $(\varphi_j)_{j\geq1}$ enjoys the following properties: 
	\begin{itemize}
		\item If $|f(s)|^p \geq j^{-1}$, then $\varphi_j(s) = |f(s)|^p$. 
		\item If $|f(s)|^p \leq (ej)^{-1}$, then $\varphi_j(s) =0$. 
		\item If $(ej)^{-1} \leq |f(s)|^p \leq j^{-1}$, then we apply Lemma~\ref{lem:hardy} to the function $\psi_j(x) = j^{-1}\psi_0(\log{j}+px)$ and get that
		\[\left|\frac{\partial}{\partial s}\varphi_j(s)\right| \leq C_1 |f(s)|^{p-1} |f'(s)| \qquad \text{and}\qquad |\Delta \varphi_j(s)| \leq C_2 |f(s)|^{p-2} |f'(s)|^2,\]
		where $\displaystyle C_1 = pe \max_{x \in [-1,0]} |\psi_0'(x)|$ and $\displaystyle C_2 = p^2 e \max_{x \in [-1,0]} |\psi_0''(x)|$. 
	\end{itemize}
	It is plain that (i) holds, where we in the first assertion use that each $\psi_j$ is uniformly continuous on any interval $(-\infty,b]$ to deduce the almost periodicity of $\varphi_j$ from that of $f$ (see \cite{Besicovitch1955}*{p.~3}). It is also clear that (ii) holds. To handle (iii), we begin by using Green's theorem as outlined above to get 
	\begin{align*}
		-\int_\kappa^{\sigma_1} \int_{-T}^T \Delta \varphi_j(s)\,dtd\sigma &= \int_{-T}^T \frac{\partial}{\partial \kappa} \varphi_j(\kappa+it)\,dt + \int_{\kappa}^{\sigma_1} \frac{\partial}{\partial t} \varphi_j(\sigma-iT)\,d\sigma \\
		&-\int_{-T}^T \frac{\partial}{\partial \sigma_1} \varphi_j(\sigma_1+it)\,dt - \int_{\kappa}^{\sigma_1} \frac{\partial}{\partial t} \varphi_j(\sigma+iT)\,d\sigma. 
	\end{align*}
	Let $I_2$, $I_3$, and $I_4$ denote the three latter integrals on the right-hand side. Our first task is to estimate their contribution as $\sigma_1 \to \infty$. It is possible to use the results from Section~\ref{sec:zeroset}, but we can we can get away with rather crude estimates. The point is that there is a constant $C_3 = C_3(f,\sigma_0)$ such that $|f'(\sigma+it)| \leq C_3 2^{-\sigma}$ for $\sigma \geq \sigma_0$, since $f$ is in $\mathscr{H}^\infty$. We get
	\[\left|\frac{\partial}{\partial s}\varphi_j(s)\right| \leq C_4 2^{-\sigma} \qquad \text{for} \qquad C_4 = \max(C_1,1) \|f\|_{\mathscr{H}^\infty}^{p-1} C_3\]
	for $\mre{s} \geq \sigma_0$. Since $\varphi_j$ is real-valued, we can estimate $\left|\frac{\partial}{\partial \sigma} \varphi_j \right|$ and $\left|\frac{\partial}{\partial t} \varphi_j\right |$ from above by $\left|\frac{\partial}{\partial s} \varphi_j\right|$ to get
	\[\left|I_2 + I_3 + I_4\right| \leq C_4 \frac{2}{\log{2}} \left(2^{-\kappa} + (T\log{2}-1) 2^{-\sigma_1}\right).\]
	If we let $\sigma_1 \to \infty$, we get a contribution of at most $C_4 \frac{2}{\log{2}} 2^{-\sigma_0}$ independently of $\kappa\geq \sigma_0$ and $T>0$. In view of this and Green's theorem, we see that the requirement (iii) holds if we can prove that
	\[\lim_{j\to \infty} \limsup_{T \to \infty} \frac{1}{2T} \int_{\sigma_0}^\infty \int_{-T}^T \big|\Delta \varphi_j(\sigma+it) - \Delta |f(\sigma+it)|^p\big| \,dt d\sigma=0.\]
	We will split the integral over $\sigma$ in two parts. Since $f$ is a nontrivial Dirichlet series, there is some $\gamma>0$ such that $f$ does not vanish in $\overline{\mathbb{C}_\gamma}$. To handle the first part, we fix $\delta>0$ sufficiently small and use Lemma~\ref{lem:BJ1} to infer that there is a constant $m>0$ such that $|f(s)| \geq m >0$ whenever $\dist(s,\mathscr{Z}(f)) \geq \delta$. We restrict our attention to $j \geq 1/m$. By Lemma~\ref{lem:BJ2} there are at most $N 2(T+1)$ zeros of $f$ in the rectangle $[\sigma_0,\gamma]\times [-T,T]$ and so $|f(s)| \leq 1/j$ occurs in at most $N 2(T+1)$ discs of radius $\delta$ centered at the zeros of $f$. Outside of these discs, we have $\Delta \varphi_j = \Delta |f|^p$. At the points inside this disc where $f$ does not vanish, we have
	\[\big|\Delta \varphi_j(\sigma+it) - \Delta |f(\sigma+it)|^p\big| \leq C_5 |f(s)|^{p-2} |f'(s)|^2 \qquad \text{for} \qquad C_5 = C_2 + p^2.\]
	Using Lemma~\ref{lem:dist} and Lemma~\ref{lem:cauchy}, we estimate
	\[|f(s)|^{p-2} |f'(s)|^2 \leq 2 \|f\|_{\mathscr{H}^\infty}^p \left(\frac{2}{\sigma_0}\right)^p\left(A^2\big(\dist(s,\mathscr{Z}_f)\big)^p + B^2 \big(\dist(s,\mathscr{Z}_f)\big)^{p-2}\right),\]
	where $A$ and $B$ only depend on $f$, $\sigma_0$, and $\gamma$. Thus 
	\begin{multline*}
		\lim_{j\to\infty}\limsup_{T\to\infty} \frac{1}{2T} \int_{\sigma_0}^\gamma \int_{-T}^T \big|\Delta \varphi_j(\sigma+it) - \Delta |f(\sigma+it)|^p\big| \,dt d\sigma \\
		\leq \inf_{\delta > 0} 2^{2p+3}\sigma_0^{-p}C_5 \|f\|_{\mathscr{H}^\infty}^p N \left(A^2 \delta^{p+2} + B^2 2 \delta^p \right) = 0. 
	\end{multline*}
	The contribution from $\gamma \leq \sigma < \infty$ is much easier to handle. If $f(+\infty) \neq 0$, then $f$ is bounded below on $\overline{\mathbb{C}_\gamma}$ so for all sufficiently large $j$ we have $\Delta \varphi_j = \Delta |f|^p$ throughout. If $f(+\infty)=0$, then straightforward estimates using the exponential decay of $f$ and $f'$ show that the contribution from $\gamma \leq \sigma < \infty$ can be bounded by $C_6/j$. 
\end{proof}
\begin{proof}
	[Proof of Corollary~\ref{cor:LP}] We start from \eqref{eq:diffint} and use that the limit in Theorem~\ref{thm:HS} is uniform to move the limit outside the integral to the effect that
	\[M_p^p (f,\sigma_0) = M_p^p(f,\sigma_1) + \lim_{T\to \infty} \frac{p^2}{2T} \int_{\sigma_0}^{\sigma_1} \int_\kappa^\infty \int_{-T}^T |f(s)|^{p-2} |f'(s)|^2 \, dt d\sigma d\kappa.\]
	For fixed $T>0$, we split the integral over $\sigma$ in two parts. For the first part, we use Tonelli's theorem to get
	\[\int_{\sigma_0}^{\sigma_1} \int_\kappa^{\sigma_1} \int_{-T}^T |f(s)|^{p-2} |f'(s)|^2 \, dt d\sigma d\kappa = \int_{\sigma_0}^{\sigma_1} \int_{-T}^T |f(s)|^{p-2} |f'(s)|^2 \,(\sigma-\sigma_0)\, dt d\sigma.\]
	For the second part, we get
	\[\int_{\sigma_0}^{\sigma_1} \int_{\sigma_1}^{\infty} \int_{-T}^T |f(s)|^{p-2} |f'(s)|^2 \, dt d\sigma d\kappa = (\sigma_1-\sigma_0) \int_{\sigma_1}^{\infty} \int_{-T}^T |f(s)|^{p-2} |f'(s)|^2 \, dt d\sigma.\]
	This leads to 
	\begin{multline*}
		M_p^p (f,\sigma_0) = M_p^p(f,\sigma_1) + \lim_{T\to\infty} \frac{p^2}{2T} \int_{\sigma_0}^{\sigma_1} \int_{-T}^T |f(s)|^{p-2} |f'(s)|^2 \,(\sigma-\sigma_0)\,dtd\sigma \\
		- (\sigma_1-\sigma_0 )\frac{\partial}{\partial \sigma_1} M_p^p(f,\sigma_1) 
	\end{multline*}
	by way of Theorem~\ref{thm:HS} again. It is plain that the final term decays exponentially as $\sigma_1 \to \infty$, so we obtain the stated result by letting $\sigma_1 \to \infty$ and then $\sigma_0 \to 0^+$. 
\end{proof}
\begin{proof}
	[Proof of Theorem~\ref{thm:LPswap}] We follow the proof of Theorem~\ref{thm:HS}, but fix $T>0$ and use
	\[u = \varphi_j-(\sigma-\sigma_0)\frac{\partial}{\partial \sigma} \varphi_j \qquad \text{and} \qquad v = (\sigma-\sigma_0) \frac{\partial}{\partial t} \varphi_j.\]
	We let $\sigma_0 \to 0^+$ and $\sigma_1\to \infty$, before taking the limit $j \to \infty$ using the dominated convergence theorem to obtain 
	\begin{multline*}
		\left|\int_{-T}^T |f(i\tau)|^p \,d\tau - |f(+\infty)|^p - p^2\int_{-T}^T \int_0^\infty |f(s)|^{p-2} |f'(s)|^2 \sigma\,d\sigma dt\right| \\
		\leq I_f(T) + I_f(-T), 
	\end{multline*}
	where
	\[I_f(\tau) = \int_0^\infty |f(\sigma+i\tau)|^{p-1} |f'(\sigma+i\tau)|\,\sigma d\sigma.\]
	It is sufficient to consider the integral $(0,\gamma)$ due to the exponential decay of
	\[\sigma \mapsto |f(\sigma+i\tau)|^{p-1} |f'(\sigma+i\tau)|\]
	as $\sigma \to \infty$. We can then use the trivial estimate $|f'(\sigma+i\tau)| \leq \|f\|_{\mathscr{H}^\infty}/\sigma$ on $(0,\gamma)$ to conclude that $I_f(\tau) \leq C$ for $C=C(f)$, which wraps up the proof. 
\end{proof}
\begin{theorem}\label{thm:LPchiswap} 
	Fix $1 \leq p<\infty$. If $f$ is in $\mathscr{H}^\infty$, then 
	\begin{align*}
		\|f\|_{\mathscr{H}^p}^p &= |f(+\infty)|^p + p^2 \int_{\mathbb{T}^\infty} \int_0^\infty |f_\chi(\sigma)|^{p-2} |f_\chi'(\sigma)|^2 \,\sigma d\sigma dm_\infty(\chi) \intertext{and} \|f\|_{\mathscr{H}^p}^p &= |f(+\infty)|^p + \lim_{T\to\infty} \frac{p^2}{2T} \int_{-T}^T \int_0^\infty |f_\chi(s)|^{p-2} |f_\chi'(s)|^2 \,\sigma d\sigma dt 
	\end{align*}
	for almost every $\chi$ on $\mathbb{T}^\infty$. 
\end{theorem}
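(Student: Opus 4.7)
The plan is to deduce the second assertion directly from Theorem~\ref{thm:LPswap} combined with Theorem~\ref{thm:Hpchiswap}, and then obtain the first assertion by integrating the second over $\mathbb{T}^\infty$ and exploiting the translation invariance of $m_\infty$ to eliminate the dependence on $T$.

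For the second assertion I would apply Theorem~\ref{thm:LPswap} to the vertical limit function $f_\chi$, which by Lemma~\ref{lem:vlf} belongs to $\mathscr{H}^\infty$ with $\|f_\chi\|_{\mathscr{H}^\infty}=\|f\|_{\mathscr{H}^\infty}$ and $f_\chi(+\infty)=a_1=f(+\infty)$. This yields
\[\lim_{T\to\infty}\left|\frac{1}{2T}\int_{-T}^T |f_\chi(i\tau)|^p\,d\tau - |f(+\infty)|^p - \frac{p^2}{2T}\int_{-T}^T \int_0^\infty |f_\chi(s)|^{p-2}|f_\chi'(s)|^2\,\sigma\,d\sigma\,dt\right| = 0.\]
The second formula of Theorem~\ref{thm:Hpchiswap} guarantees that the first Ces\`aro average converges to $\|f\|_{\mathscr{H}^p}^p$ for almost every $\chi$, whence the same holds for the Littlewood--Paley integral, which is exactly the second assertion.

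For the first assertion I would integrate the second over $\chi \in \mathbb{T}^\infty$. Since $f_\chi(\sigma+it)=f_{\chi\mathfrak{p}^{-it}}(\sigma)$ and the integrand is nonnegative, Tonelli's theorem and the translation invariance of $m_\infty$ show that
\[\int_{\mathbb{T}^\infty} \frac{p^2}{2T}\int_{-T}^T \int_0^\infty |f_\chi(s)|^{p-2}|f_\chi'(s)|^2\,\sigma\,d\sigma\,dt\,dm_\infty(\chi) = p^2\int_{\mathbb{T}^\infty}\int_0^\infty|f_\chi(\sigma)|^{p-2}|f_\chi'(\sigma)|^2\,\sigma\,d\sigma\,dm_\infty(\chi)\]
for every $T>0$, so Fatou's lemma combined with the second assertion gives the inequality $\|f\|_{\mathscr{H}^p}^p - |f(+\infty)|^p \leq p^2\int_{\mathbb{T}^\infty}\int_0^\infty|f_\chi|^{p-2}|f_\chi'|^2\sigma\,d\sigma\,dm_\infty$. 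For the reverse inequality I would start from Corollary~\ref{cor:LP}, apply Fatou in the $\sigma$-variable to pull $\liminf_{T\to\infty}$ inside, identify the pointwise (in $\sigma$) limit of $\frac{1}{2T}\int_{-T}^T|f(\sigma+it)|^{p-2}|f'(\sigma+it)|^2\,dt$ with $\int_{\mathbb{T}^\infty}|f_\chi(\sigma)|^{p-2}|f_\chi'(\sigma)|^2\,dm_\infty(\chi)$ via the ergodic theorem, and let $\sigma_0\to0^+$ by monotone convergence.

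The main technical obstacle is this pointwise-in-$\sigma$ identification when $1 \leq p<2$, because $\chi\mapsto|f_\chi(\sigma)|^{p-2}|f_\chi'(\sigma)|^2$ fails to be continuous on the (measure-zero) set where $f_\chi(\sigma)=0$, so Lemma~\ref{lem:limswapchi1} does not apply verbatim. For $p\geq 2$ the function is continuous on $\mathbb{T}^\infty$ and the identification is an immediate two-variable extension of Lemma~\ref{lem:limswapchi1}. For $p<2$ I would approximate by the continuous truncations $F_N=\min(|f_\chi(\sigma)|^{p-2}|f_\chi'(\sigma)|^2, N)$, apply the unique ergodicity of the Kronecker flow to each $F_N$, and let $N\to\infty$ by monotone convergence; since only the lower bound on $\liminf_T$ of the Ces\`aro average is required for Fatou to produce the reverse inequality, the truncation argument suffices.
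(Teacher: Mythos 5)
Your argument is correct in substance but follows a genuinely different route from the paper's. The paper proves the first identity first: it applies Corollary~\ref{cor:LP} to $f_\chi$, integrates over $\mathbb{T}^\infty$, and interchanges the $\sigma_0$-limit (monotone convergence) and the $T$-limit (using that the estimates in the proof of Theorem~\ref{thm:HS} are uniform in $\chi$) with the integral, finishing with Tonelli's theorem and rotation invariance of $m_\infty$; the second identity is then deduced from the first by observing that $\chi\mapsto p^2\int_0^\infty|f_\chi(\sigma)|^{p-2}|f_\chi'(\sigma)|^2\,\sigma\,d\sigma$ lies in $L^1(\mathbb{T}^\infty)$ and invoking the pointwise ergodic theorem. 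You go the other way: your second identity falls out immediately from Theorem~\ref{thm:LPswap} applied to $f_\chi$ together with the second formula of Theorem~\ref{thm:Hpchiswap} -- a nice shortcut that avoids the uniform-in-$\chi$ interchange altogether -- while your first identity then needs a two-sided argument: Fatou on $\mathbb{T}^\infty$ plus translation invariance of $m_\infty$ for the upper bound, and Corollary~\ref{cor:LP} with Fatou in $\sigma$ and a fixed-$\sigma$ ergodic lower bound for the reverse. The paper's route gets both identities from one computation; yours gets the second identity with no new estimates at the cost of the extra inequality chase for the first.

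One step needs repair. For $1\leq p<2$ the truncation $F_N=\min\bigl(|f_\chi(\sigma)|^{p-2}|f_\chi'(\sigma)|^2,N\bigr)$ need not be continuous on $\mathbb{T}^\infty$: at characters where $f_\chi(\sigma)$ and $f_\chi'(\sigma)$ vanish simultaneously, the quantity $|f_\chi(\sigma)|^{p-2}|f_\chi'(\sigma)|^2$ can approach different values (including values below $N$) along different sequences, so unique ergodicity does not apply to $F_N$ verbatim. Two fixes are available: either replace the truncations by the genuinely continuous minorants $\bigl(|f_\chi(\sigma)|^2+\varepsilon\bigr)^{(p-2)/2}|f_\chi'(\sigma)|^2$, which lie below the integrand since $p-2\leq0$, and let $\varepsilon\to0^+$ by monotone convergence; or keep $F_N$ but invoke the equidistribution statement for bounded functions whose discontinuity set has $m_\infty$-measure zero (sandwiching by continuous functions, as for \eqref{eq:ergodicopen}), which in turn requires noting that $\{\chi:\,f_\chi(\sigma)=0\}$ is $m_\infty$-null. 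Since, as you observe, only the lower bound on the $\liminf$ of the Ces\`aro averages is needed, either repair slots straight into your argument.
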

\begin{proof}
	Since $\|f_\chi\|_{\mathscr{H}^p} = \|f\|_{\mathscr{H}^p}$ and $f_\chi(+\infty)=f(+\infty)$ for every $\chi$ on $\mathbb{T}^\infty$, we get
	\[\|f\|_{\mathscr{H}^p}^p = |f(+\infty)|^p + \lim_{\sigma_0 \to 0^+} \lim_{T\to\infty} \frac{p^2}{2T} \int_{\sigma_0}^\infty \int_{-T}^T |f_\chi(s)|^{p-2} |f_\chi'(s)|^2 \,(\sigma-\sigma_0)\,dtd\sigma\]
	from Corollary~\ref{cor:LP}. We now wish to integrate this quantity over $\mathbb{T}^\infty$ and move the limits outside the integral (preserving their order). The $\sigma_0$-limit can be moved outside by the monotone convergence theorem. The limit in $T$ can be commuted through after noting that the convergence is uniform in $\chi$ (with a constant depending on $\sigma_0>0$), since the estimates used in the proof of Theorem~\ref{thm:HS} are all independent of $\chi$, cf. the remark following Lemma~\ref{lem:dist}. Using Tonelli's theorem and the rotation invariance of the Haar measure $m_\infty$ of $\mathbb{T}^\infty$, we obtain
	\[\|f\|_{\mathscr{H}^p}^p = |f(+\infty)|^p + \lim_{\sigma_0 \to 0^+} p^2 \int_{\sigma_0}^\infty \int_{\mathbb{T}^\infty} |f_\chi(\sigma)|^{p-2} |f_\chi'(\sigma)|^2\, dm_\infty(\chi) \,(\sigma-\sigma_0)\,d\sigma.\]
	Using Tonelli's theorem again and then the monotone convergence theorem, we obtain the first-stated formula. Since $\|f\|_{\mathscr{H}^p} \leq \|f\|_{\mathscr{H}^\infty} < \infty$, the first-stated formula implies that the function
	\[\chi \mapsto p^2 \int_0^\infty |f_\chi(\sigma)|^{p-2} |f_\chi'(\sigma)|^2 \,\sigma d\sigma\]
	is in $L^1(\mathbb{T}^\infty)$. The second-stated formula follows from this and the second version of the ergodic theorem discussed in Section~\ref{sec:bvbv}. 
\end{proof}

\section{Jensen's formula and the mean counting function} \label{sec:jensen} 
The first goal of the present section is to implement a version of Jensen's formula \eqref{eq:jensen} in the theory of $\mathscr{H}^\infty$ and explain how it is related to the mean counting function. We begin with the following result.
\begin{theorem}[Jessen \cite{Jessen1933}] \label{thm:jessen} 
	Suppose that $f$ is a nontrivial analytic almost periodic function in the strip $\mathbb{S}_{\alpha,\beta}$. The limit
	\[\mathscr{J}_f(\sigma) = \lim_{T\to\infty} \frac{1}{2T} \int_{-T}^T \log|f(\sigma+it)|\,dt\]
	exists for every $\alpha<\sigma<\beta$ and defines a convex function of $\sigma$. If $(f_j)_{j\geq1}$ is a sequence of analytic almost periodic functions in $\mathbb{S}_{\alpha,\beta}$ converging uniformly to $f$, then
	\[\lim_{j\to\infty} \mathscr{J}_{f_j}(\sigma) = \mathscr{J}_f(\sigma)\]
	for each fixed $\alpha<\sigma<\beta$. 
\end{theorem}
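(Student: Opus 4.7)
The plan is to follow Jessen's classical approach: apply Littlewood's argument principle on tall rectangles to convert averages of $\log|f|$ into weighted zero-counting sums, and then harvest a mean value from the Bohr--Jessen zero estimates (Lemmas~\ref{lem:BJ1}--\ref{lem:BJ3}) together with almost periodicity.

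For existence, fix $\sigma_0 < \sigma < \sigma_1$ in $(\alpha,\beta)$, after a small perturbation if necessary to avoid zeros of $f$ on the vertical lines $\mre s = \sigma_0, \sigma_1$, and apply Littlewood's formula on the rectangle $R_T = [\sigma_0,\sigma_1]\times[-T,T]$:
\begin{equation*}
\int_{-T}^T \log|f(\sigma_0+it)|\,dt - \int_{-T}^T \log|f(\sigma_1+it)|\,dt = 2\pi \sum_{z \in \mathscr{Z}_f \cap R_T}(\sigma_1 - \mre z) + E_T,
\end{equation*}
where $E_T$ collects the argument-integrals on the two horizontal sides. Dividing by $2T$ reduces the existence of $\mathscr{J}_f(\sigma_0)$ and $\mathscr{J}_f(\sigma_1)$ to (a) convergence of the normalized weighted zero count, and (b) $E_T = o(T)$. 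For (a), Lemma~\ref{lem:BJ2} bounds the count per unit height, so the sum is $O(1)$ uniformly in $T$; almost periodicity upgrades boundedness to convergence by producing the Cauchy property from comparison of $R_T$ with its translate by an $\varepsilon$-almost period $\tau_k$, using Lemma~\ref{lem:BJ1} to set up a near-bijection between the two zero sets outside a set of controlled size. For (b), the argument variation along $\mre s = \pm T$ can be bounded via Lemma~\ref{lem:dist}, with thin neighborhoods of zeros on the horizontal sides handled by Lemma~\ref{lem:BJ2}. Existence at every $\sigma$ then follows by varying the reference line and iterating the identity. Convexity is immediate: the same Littlewood identity expresses $\mathscr{J}_f(\sigma_0)-\mathscr{J}_f(\sigma_1)$ as the $\mre z$-weighted zero density inside the substrip, which is monotone in the endpoints, so the right derivative of $\mathscr{J}_f$ is nondecreasing.

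For continuity under uniform convergence $f_j \to f$, the same Littlewood identity reduces the claim to (i) convergence $\mathscr{J}_{f_j}(\sigma_1) \to \mathscr{J}_f(\sigma_1)$ at a chosen reference line, and (ii) convergence of the normalized weighted zero densities. Picking $\sigma_1$ so that $f$ is bounded below on a horizontal neighborhood of $\mre s = \sigma_1$ (available by Lemma~\ref{lem:BJ1}) makes $\log|f_j| \to \log|f|$ uniformly there and secures (i); Rouch\'e's theorem together with the uniform bound of Lemma~\ref{lem:BJ2} pinpoints the zeros of $f_j$ near those of $f$ and secures (ii). The main obstacle is the quantitative coupling required to interchange the limits $T\to\infty$ and $j\to\infty$: one needs uniformity in $j$ of the zero-distance estimates near $\mathscr{Z}_f$, which can be supplied by applying Lemma~\ref{lem:cauchy} and Lemma~\ref{lem:BJ3} jointly to the family $\{f_j\}$ once Rouch\'e locates its zeros near those of $f$. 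A related subtle point, already present in the existence proof, is ruling out wandering subsequential limits of $E_T/(2T)$; this is the ergodic content baked into almost periodicity, which guarantees a genuine limit rather than merely bounded subsequential ones.
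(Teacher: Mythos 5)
The paper does not prove this theorem: it is quoted from Jessen, with Levin (Section~VI.3) cited for a streamlined proof, so the comparison below is with that classical argument. Your route through Littlewood's lemma has a structural gap that the classical proof is specifically designed to avoid. Littlewood's identity on $[\sigma_0,\sigma_1]\times[-T,T]$ only controls the \emph{difference} of the two vertical-line averages; even granting your items (a) and (b), you conclude that
\[
\frac{1}{2T}\int_{-T}^T\log|f(\sigma_0+it)|\,dt-\frac{1}{2T}\int_{-T}^T\log|f(\sigma_1+it)|\,dt
\]
converges, not that either term converges. ``Varying the reference line and iterating'' cannot repair this: convergence of all pairwise differences is compatible with a common divergent oscillation in $T$, so a base case is indispensable. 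In the half-plane setting of $\mathscr{H}^\infty$ one has $\sigma\to+\infty$ as a base line, but the theorem is for a general strip, where no vertical line on which $f$ is bounded away from zero need exist (the real parts of the zeros of an analytic almost periodic function can be dense in a subinterval). For the same reason your appeal to Lemma~\ref{lem:BJ1} in the continuity part is a misuse: that lemma gives a lower bound only at points at distance $\geq\delta$ from $\mathscr{Z}_f$; it does not produce a zero-free line $\mre s=\sigma_1$.

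Moreover, the other load-bearing step --- direct convergence of $\frac{\pi}{T}\sum(\sigma_1-\mre z)$ over $R_T$ --- is essentially the theorem in disguise: in this paper (Theorem~\ref{thm:jensen}) and in the classical literature that limit is \emph{deduced from} the existence of $\mathscr{J}_f$ via Littlewood, not the other way around. Your sketch of its independent proof is also not sound as stated: the weight $\sigma_1-\mre z$ does not vanish at the horizontal edges of the unit boxes, so zeros crossing those edges under translation by an almost period prevent the weighted counting function from being uniformly almost periodic (only a Stepanov-type almost periodicity survives, which requires a separate averaged argument), and the horizontal argument integrals are not $o(T)$ for free --- via Lemma~\ref{lem:dist} they grow like the logarithm of the reciprocal distance from the side $\mim s=\pm T$ to $\mathscr{Z}_f$, which is unbounded over arbitrary $T$, so one must first restrict to well-separated heights and interpolate using Lemma~\ref{lem:BJ2}. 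The classical proof (Levin, Section~VI.3, or the Hardy--Stein-type argument indicated after the theorem) instead establishes the existence of $\mathscr{J}_f(\sigma)$ directly, using Lemmas~\ref{lem:BJ1}--\ref{lem:BJ3} to get uniform local integrability of $\log|f|$ and an averaged almost periodicity of $t\mapsto\log|f(\sigma+it)|$, and only afterwards brings in Littlewood's lemma to relate $\mathscr{J}_f$ to the zeros.
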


A streamlined version of Jessen's argument can be found in \cite{Levin1980}*{Section~VI.3}. It is also possible to establish the result by techniques similar to those used in the proof of the Hardy--Stein formula above (compare with \cite{BP2021}*{Section~3}). 

If we apply Theorem~\ref{thm:jessen} to a nontrivial function $f$ in $\mathscr{H}^\infty$, then its behavior as $\mre{s} \to \infty$ and the convexity of $\mathscr{J}_f$ ensures that $\mathscr{J}_f$ is non-decreasing. Moreover, Lemma~\ref{lem:vlf} and the final assertion of Theorem~\ref{thm:jessen} ensure that
\[\mathscr{J}_f(\sigma) = \mathscr{J}_{f_\chi}(\sigma)\]
for every $\chi$ on $\mathbb{T}^\infty$ and every $\sigma>0$. Combining these two assertions yields the following result (see \cite{BP2021}*{Theorem~3.7} for the proof), that will be used in the proof of Theorem~\ref{thm:largelim}.
\begin{lemma}\label{lem:jessenchi} 
	If $f$ is a nontrivial function in $\mathscr{H}^\infty$, then $\log|f^\ast|$ is in $L^1(\mathbb{T}^\infty)$ and
	\[\lim_{\sigma \to 0^+} \mathscr{J}_f(\sigma) \leq \int_{\mathbb{T}^\infty} \log|f^\ast(\chi)|\,dm_\infty(\chi).\]
\end{lemma}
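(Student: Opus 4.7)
The plan is to transfer the time-averaged mean $\mathscr{J}_f(\sigma)$ to the polydisc $\mathbb{T}^\infty$ by means of the ergodic theorem, and then let $\sigma \to 0^+$ using Fatou-type arguments anchored by the uniform bound $\|f\|_{\mathscr{H}^\infty}$.

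First I would establish, for each fixed $\sigma > 0$, the identity
\[\mathscr{J}_f(\sigma) = \int_{\mathbb{T}^\infty}\log|f_\chi(\sigma)|\,dm_\infty(\chi).\]
Since $\log|\cdot|$ is not continuous, Lemma~\ref{lem:limswapchi1} does not apply directly. Instead, I would apply it to the continuous truncation $\psi_\epsilon(z) = \log\max(|z|,\epsilon)$: the proof of Lemma~\ref{lem:limswapchi1} shows that for each fixed $\sigma > 0$,
\[\lim_{T\to\infty}\frac{1}{2T}\int_{-T}^T\psi_\epsilon(f(\sigma+it))\,dt = \int_{\mathbb{T}^\infty}\psi_\epsilon(f_\chi(\sigma))\,dm_\infty(\chi).\]
I would then let $\epsilon \to 0^+$ on both sides using monotone convergence from above (with the constant dominant $\log^+\|f\|_{\mathscr{H}^\infty}$) and recognize the left-hand limit as $\mathscr{J}_f(\sigma)$ via Theorem~\ref{thm:jessen}.

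To obtain the stated inequality, I would let $\sigma \to 0^+$ in the identity above. Convexity of $\mathscr{J}_f$ ensures that $\lim_{\sigma\to 0^+}\mathscr{J}_f(\sigma)$ exists in $[-\infty,\log\|f\|_{\mathscr{H}^\infty}]$. By Lemma~\ref{lem:chibv}, $\log|f_\chi(\sigma)| \to \log|f^\ast(\chi)|$ for $m_\infty$-a.e.~$\chi$, and the uniform upper bound $\log\|f\|_{\mathscr{H}^\infty}$ licenses a reverse Fatou argument:
\[\lim_{\sigma\to 0^+}\mathscr{J}_f(\sigma) \leq \limsup_{\sigma\to 0^+}\int_{\mathbb{T}^\infty}\log|f_\chi(\sigma)|\,dm_\infty(\chi) \leq \int_{\mathbb{T}^\infty}\log|f^\ast(\chi)|\,dm_\infty(\chi).\]

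It remains to show $\log|f^\ast| \in L^1(\mathbb{T}^\infty)$. The positive part is bounded by $\log^+\|f\|_{\mathscr{H}^\infty}$ and hence trivially integrable. For the negative part, I would apply ordinary Fatou to $\log^-|f_\chi(\sigma)| \geq 0$ together with the identity from the first step to bound
\[\int_{\mathbb{T}^\infty}\log^-|f^\ast|\,dm_\infty \leq \liminf_{\sigma\to 0^+}\int_{\mathbb{T}^\infty}\log^-|f_\chi(\sigma)|\,dm_\infty \leq \log^+\|f\|_{\mathscr{H}^\infty} - \lim_{\sigma\to 0^+}\mathscr{J}_f(\sigma).\]
The main obstacle is to verify that this last limit is finite, which I expect to be the most delicate point. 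I would extract the required finite lower bound from the asymptotics $\mathscr{J}_f(\sigma) = \log|a_N| - \sigma\log N + o(1)$ as $\sigma\to\infty$, where $a_N$ is the leading nonzero coefficient of $f$: convexity then pins down an upper bound on the right derivative $\mathscr{J}_f'(\sigma)$ throughout $(0,\infty)$, and integrating against this bound yields a finite lower bound for $\mathscr{J}_f$ as $\sigma\to 0^+$.
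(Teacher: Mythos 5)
Your argument is sound, and it is genuinely more self-contained than the paper's treatment: the paper does not prove this lemma at all, but derives it from the $\chi$-invariance $\mathscr{J}_f=\mathscr{J}_{f_\chi}$ and the monotonicity of $\mathscr{J}_f$, citing the proof in the reference of Brevig and Perfekt. Your route---the ergodic theorem applied to the continuous truncations $\psi_\epsilon$, monotone convergence in $\epsilon$, and then Fatou/reverse Fatou as $\sigma\to0^+$ under the uniform upper bound $\log\|f\|_{\mathscr{H}^\infty}$---reproduces the substance of that cited proof from the ingredients available in this paper (the fixed-$\sigma$ display inside the proof of Lemma~\ref{lem:limswapchi1}, Theorem~\ref{thm:jessen}, and Lemma~\ref{lem:chibv}). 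Your handling of the ``delicate point'' is also correct and is exactly the observation the paper records after Theorem~\ref{thm:jessen}: since $\mathscr{J}_f(\sigma)=\log|a_N|-\sigma\log N+o(1)$ as $\sigma\to\infty$, convexity forces the right derivative to be at most $-\log N\leq0$, so $\mathscr{J}_f$ is monotone and $\lim_{\sigma\to0^+}\mathscr{J}_f(\sigma)\geq\mathscr{J}_f(1)>-\infty$.

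One soft spot: you assert the \emph{identity} $\mathscr{J}_f(\sigma)=\int_{\mathbb{T}^\infty}\log|f_\chi(\sigma)|\,dm_\infty(\chi)$ and justify the left-hand $\epsilon$-limit merely ``via Theorem~\ref{thm:jessen}''. That theorem gives the existence of $\mathscr{J}_f(\sigma)$, but it does not by itself license interchanging $\epsilon\to0^+$ with $T\to\infty$; to get equality one must control the truncation error on the set where $|f(\sigma+it)|<\epsilon$, uniformly in $T$, which requires the Bohr--Jessen small-value and zero-counting estimates (Lemmas~\ref{lem:BJ1}--\ref{lem:BJ3}) or the corresponding argument in the cited reference. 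Fortunately this over-claim is harmless for your proof: since $\psi_\epsilon\geq\log|\cdot|$, the truncation argument immediately yields the inequality $\mathscr{J}_f(\sigma)\leq\int_{\mathbb{T}^\infty}\log|f_\chi(\sigma)|\,dm_\infty(\chi)$, and an inspection of your subsequent steps (the reverse Fatou bound and the estimate $\int\log^-|f_\chi(\sigma)|\,dm_\infty\leq\log^+\|f\|_{\mathscr{H}^\infty}-\mathscr{J}_f(\sigma)$) shows that only this inequality is ever used. So either downgrade the identity to an inequality, or supply the extra small-value control if you want the equality.
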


Our approach to Jensen's formula for $\mathscr{H}^\infty$ is based on Littlewood's argument principle \cite{Littlewood1924} (see also \cite{Titchmarsh1958}*{\S3.8}), which we shall now recall. Let $\Omega$ be a domain in the complex plane which contains the rectangle $R=[\sigma_0,\sigma_1]\times[-T,T]$. If $f$ is analytic in $\Omega$ and $f$ does not vanish on the segments $[\sigma_0-iT,\sigma_1-iT]$, $[\sigma_1-iT,\sigma_1+iT]$, and $[\sigma_0+iT,\sigma_1+iT]$, then 
\begin{equation}\label{eq:littlewood} 
	\begin{split}
		2\pi \sum_{\substack{s \in f^{-1}(\{0\}) \\
		|\mim{s}|<T \\
		\sigma_0 < \mre{s} < \sigma_1}} \left(\mre{s}-\sigma_0\right) &= \int_{-T}^T \log|f(\sigma_0+it)|\,dt + \int_{\sigma_0}^{\sigma_1} \arg{f(\sigma+iT)}\,d\sigma \\
		&-\int_{-T}^T \log|f(\sigma_1+it)|\,dt-\int_{\sigma_0}^{\sigma_1} \arg{f(\sigma-iT)}\,d\sigma. 
	\end{split}
\end{equation}
Here $\arg{f}$ denotes a continuous branch of the argument in a simply connected domain that contains $[\sigma_0-iT,\sigma_1-iT]\cup [\sigma_1-iT,\sigma_1+iT]\cup[\sigma_0+iT,\sigma_1+iT]$.

The contribution of the horizontal integrals in \eqref{eq:littlewood} can be controlled on a relatively dense set $(\pm T_j)_{j\geq1}$ using the almost periodicity of $f$. This can be extended to the general case via Lemma~\ref{lem:BJ2}, yielding the following result. It is identical to \cite{BP2021}*{Lemma~6.1}, except that we have removed the unnecessary assumption that $f$ does not vanish on the line $\mre{s}=\sigma_0$ from the statement.
\begin{theorem}[Jensen's formula] \label{thm:jensen} 
	If $f$ is in $\mathscr{H}^\infty$ and $f(+\infty)\neq0$, then
	\[\lim_{T\to\infty} \frac{\pi}{T} \sum_{\substack{s \in f^{-1}(\{0\}) \\
	|\mim{s}|<T \\
	\sigma_0 < \mre{s} < \infty}} \left(\mre{s}-\sigma_0\right) = \mathscr{J}_f(\sigma_0) - \log|f(+\infty)|\]
	for every $\sigma_0>0$. 
\end{theorem}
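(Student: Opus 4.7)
The plan is to reduce the result to \cite{BP2021}*{Lemma~6.1}, which establishes the identical formula under the additional hypothesis that $f$ does not vanish on the line $\mre s = \sigma_0$. That lemma is proven by applying Littlewood's argument principle \eqref{eq:littlewood} on $[\sigma_0, \sigma_1] \times [-T, T]$: as $\sigma_1 \to \infty$, the right vertical integral divided by $2T$ tends to $\log|f(+\infty)|$ and the top and bottom horizontal integrals converge absolutely to integrals over $[\sigma_0, \infty)$, both via the uniform exponential decay $|f(\sigma + it) - f(+\infty)| \leq C\,2^{-\sigma}$; dividing by $2T$ and letting $T \to \infty$ along a relatively dense sequence produced by the almost periodicity of $f$ in $\mathbb{C}_{\sigma_0/2}$ makes the horizontal contributions negligible, while Theorem~\ref{thm:jessen} converts the left vertical integral into $\mathscr{J}_f(\sigma_0)$. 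The passage from the relatively dense sequence to arbitrary $T$ uses Lemma~\ref{lem:BJ2} to bound the change in the zero-sum between $T$ and the nearest element of the sequence by a constant independent of $T$, which is absorbed by the factor $1/T$.

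The main obstacle, and the only point of novelty relative to \cite{BP2021}*{Lemma~6.1}, is the removal of the hypothesis that $f$ is zero-free on $\mre s = \sigma_0$, which is intrinsic to Littlewood's principle but absent from our statement. To handle this, I would apply the cited lemma with shifted parameters $\sigma_0'$ and $\sigma_0''$ and then squeeze. By Lemma~\ref{lem:BJ2}, the zero set of $f$ in each strip $\mathbb{S}_{\sigma_0/2, \gamma}$ is locally finite, so its projection onto the real axis is countable; hence for all but countably many $\sigma_0' \in (\sigma_0/2, \sigma_0)$ and all but countably many $\sigma_0'' > \sigma_0$ sufficiently close to $\sigma_0$, the lines $\mre s = \sigma_0'$ and $\mre s = \sigma_0''$ carry no zero of $f$. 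For such admissible $\sigma_0'$ and $\sigma_0''$, the cited lemma yields the identity in the theorem statement with $\sigma_0$ replaced by $\sigma_0'$ or $\sigma_0''$.

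To conclude, one observes that the weighted sum
\[S(\sigma', T) = \sum_{\substack{s \in f^{-1}(\{0\})\\ |\mim s| < T\\ \sigma' < \mre s < \infty}} (\mre s - \sigma')\]
is monotonically non-increasing in $\sigma'$ (increasing $\sigma'$ both removes terms and shrinks the remaining ones). Hence for admissible $\sigma_0' < \sigma_0 < \sigma_0''$,
\[\mathscr{J}_f(\sigma_0'') - \log|f(+\infty)| \leq \liminf_{T\to\infty} \frac{\pi}{T} S(\sigma_0, T) \leq \limsup_{T\to\infty} \frac{\pi}{T} S(\sigma_0, T) \leq \mathscr{J}_f(\sigma_0') - \log|f(+\infty)|.\]
Letting $\sigma_0'\to \sigma_0^-$ and $\sigma_0''\to \sigma_0^+$ along admissible values and invoking the continuity of $\mathscr{J}_f$ (a consequence of its convexity, Theorem~\ref{thm:jessen}) squeezes both bounds onto $\mathscr{J}_f(\sigma_0) - \log|f(+\infty)|$, establishing the formula.
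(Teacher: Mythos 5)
Your proposal is correct, and it differs from the paper precisely at the one point where the theorem goes beyond the literature. The paper also does not reprove \cite{BP2021}*{Lemma~6.1}; its point is that the hypothesis that $f$ be zero-free on the line $\mre{s}=\sigma_0$ was never needed in the first place, because Littlewood's principle \eqref{eq:littlewood} only requires non-vanishing on the other three sides of the rectangle (zeros on the left edge carry zero weight and $\log|f(\sigma_0+it)|$ is still locally integrable), so the original argument---control of the horizontal integrals on a relatively dense set of heights $\pm T_j$ via almost periodicity, extension to all $T$ via Lemma~\ref{lem:BJ2}, and Theorem~\ref{thm:jessen} for the left vertical integral---goes through verbatim without that assumption. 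You instead keep the cited lemma as a black box and remove the hypothesis externally: the weighted zero-sum $S(\sigma',T)$ is non-increasing in $\sigma'$, all but countably many vertical lines are zero-free, and the squeeze closes because $\mathscr{J}_f$ is continuous at $\sigma_0$, being convex on a neighbourhood by Theorem~\ref{thm:jessen}. Both routes are sound; the paper's observation is more direct, while your perturbation argument buys the convenience of never reopening the proof of the cited lemma. One minor imprecision in your expository first paragraph: the individual horizontal integrals $\int_{\sigma_0}^{\sigma_1}\arg f(\sigma\pm iT)\,d\sigma$ do not converge absolutely as $\sigma_1\to\infty$, since $\arg f(\sigma\pm iT)$ tends to $\arg f(+\infty)$ rather than to $0$; one must either normalize the branch by subtracting $\arg f(+\infty)$ (whose contributions from the top and bottom edges cancel), or simply stop the rectangle at a finite abscissa $\gamma$ beyond which $f$ is zero-free. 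This does not affect your argument, since you cite the lemma rather than reprove it.
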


The existence of the mean counting function \eqref{eq:meancounting} follows at once from Lemma~\ref{lem:jessenchi} and Theorem~\ref{thm:jensen}, applied to the Frostman shifts $f_\xi$ for $\xi$ in $\mathbb{D}$, as introduced in \eqref{eq:frostman}. It is an easy consequence of Bohr's theorem that if $f$ is in the unit ball of $\mathscr{H}^\infty$, then so is $f_\xi$ for every $\xi$. Note that this argument also recovers the Littlewood-type estimate \eqref{eq:Lest} since $|f_\xi^\ast| \leq 1$ almost everywhere on $\mathbb{T}^\infty$. 
\begin{proof}
	[Proof of Theorem~\ref{thm:Nfswap}] If $f$ is a Dirichlet series mapping $\mathbb{C}_0$ to $\mathbb{D}$, we let $f_\xi = B_\xi S_\xi F_\xi$ denote the canonical factorization of the Frostman shift $f_\xi$ viewed as a function in $H^\infty(\mathbb{C}_0)$. We restrict our attention to the quasi-every $\xi$ in $\mathbb{D} \setminus\{f(+\infty)\}$ for which $f_\xi=B_\xi F_\xi$. Since $f_\xi(s) = 0$ if and only if $f(s)=\xi$, we have 
	\begin{equation}\label{eq:Nfxi} 
		N_f(\xi,T) = \frac{\pi}{T} \sum_{\substack{s \in f_\xi^{-1}(\{0\}) \\
		|\mim{s}| < T \\
		0<\mre{s}<\infty}}\mre{s}. 
	\end{equation}
	If $\xi \neq f(+\infty)$, then there is $\gamma=\gamma(f,\xi)>0$ such that $f_\xi$ does not vanish in $\overline{\mathbb{C}_\gamma}$. This means we can restrict the sum in \eqref{eq:Nfxi} to $0<\mre{s}<\gamma$. We will apply Littlewood's argument principle \eqref{eq:littlewood} to the function $f_\xi$ on the rectangle $[\sigma_0,\gamma] \times [-T,T]$, for $T$ such that $f_\xi$ does not vanish for $\mim{s}= \pm T$. We use integration by parts to write 
	\begin{align*}
		\int_{\sigma_0}^{\gamma} \arg{f_\xi(\sigma \pm iT)}\,d\sigma = (\gamma-\sigma_0) \arg{f_\xi(\gamma+iT)} &-\mim \int_{\sigma_0}^\gamma \frac{B_\xi'(\sigma \pm iT)}{B_\xi(\sigma\pm iT)}\,(\sigma-\sigma_0)\,d\sigma\\
		&-\mim \int_{\sigma_0}^\gamma \frac{F_\xi'(\sigma \pm iT)}{F_\xi(\sigma\pm iT)}\,(\sigma-\sigma_0)\,d\sigma. 
	\end{align*}
	Using Lemma~\ref{lem:blaschkeint} and Lemma~\ref{lem:outer} with Theorem~\ref{thm:Hinftyloc}, we infer from this that
	\[\lim_{\sigma_0 \to 0^+}\left|\int_{\sigma_0}^{\gamma} \arg{f_\xi(\sigma \pm iT)}\,d\sigma\right| \leq C(f_\xi,\gamma).\]
	Since $f_\xi$ does not have a singular inner factor, we know that
	\[\lim_{\sigma_0 \to 0^+} \int_{-T}^T \log|f_\xi(\sigma_0+it)|\,dt = \int_{-T}^T \log|f_\xi(i\tau)|\,d\tau\]
	for every fixed $T>0$. Dividing by $2T$ and letting $T\to\infty$, we obtain that
	\[\lim_{T\to\infty} \left|N_f(\xi,T) - \frac{1}{2T} \int_{-T}^T \log|f_\xi(i\tau)|\,d\tau + \frac{1}{2T}\int_{-T}^T \log|f_\xi(\gamma+it)|\,dt \right|=0,\]
	at first when the limit is taken over $T$ for which $f_\xi$ has no zeros on $\mim{s} = T$. However, armed with Theorem~\ref{thm:Hinftyloc} this is immediately extended to hold for the full limit. The proof is completed by noting that, since $f_\xi$ does not vanish in $\mathbb{C}_\gamma$, 
	\begin{equation}\label{eq:firstcoeff} 
		\lim_{T \to \infty} \frac{1}{2T}\int_{-T}^T \log|f_\xi(\gamma+it)|\,dt = \log|f_\xi(+\infty)|, 
	\end{equation}
	as seen for example from Theorem~\ref{thm:jensen}. 
\end{proof}

We can now establish the following result, which was also mentioned in the introduction.
\begin{theorem}\label{thm:limsup} 
	If $f$ is a Dirichlet series mapping $\mathbb{C}_0$ to $\mathbb{D}$, then
	\[\limsup_{T\to \infty} N_f(\xi,T) \leq \log\left|\frac{1-\overline{\xi}f(+\infty)}{\xi-f(+\infty)}\right|\]
	for every $\xi$ in $\mathbb{D} \setminus \{f(+\infty)\}$. 
\end{theorem}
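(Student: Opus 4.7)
The plan is to integrate the Jensen--Blaschke-type inequality from the proof of Theorem~\ref{thm:Hinftyloc} over a long vertical segment and to let the parameters tend to their limits in the right order. I work with the Frostman shift $f_\xi = (\xi - f)/(1 - \overline{\xi}f)$, which is in the closed unit ball of $\mathscr{H}^\infty$ and satisfies $\mathscr{Z}_{f_\xi} = f^{-1}(\{\xi\})$; the claim becomes $\limsup_{T \to \infty} N_f(\xi, T) \leq -\log|f_\xi(+\infty)|$. Because $\xi \neq f(+\infty)$, we have $f_\xi(+\infty) \neq 0$, and the absolute convergence of the Dirichlet series of $f_\xi$ at infinity produces $\gamma_0 > 0$ such that $|f_\xi(s)| \geq |f_\xi(+\infty)|/2$ (and in particular $f_\xi$ has no zeros) throughout $\overline{\mathbb{C}_{\gamma_0}}$.

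For $\gamma > \gamma_0$ and $y \in \mathbb{R}$, the argument that proves Theorem~\ref{thm:Hinftyloc}, applied at $w = \gamma + iy$ with $\|f_\xi\|_\infty \leq 1$, yields
\[
\sum_{s \in \mathscr{Z}_{f_\xi}} \frac{2\gamma\,\mre s}{(\gamma + \mre s)^2 + (\mim s - y)^2} \leq -\log|f_\xi(\gamma + iy)|.
\]
Integrating in $y$ over $[-T, T]$ and using $\arctan x \geq \pi/2 - 1/x$ for $x \geq 1$, the $y$-integral is at least $\pi/(\gamma + \mre s) - 2/(\delta T)$ whenever $|\mim s| \leq (1-\delta)T$ and $T$ is sufficiently large. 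Combined with $\gamma/(\gamma + \mre s) \geq \gamma/(\gamma + \gamma_0)$ (valid since every zero has $\mre s < \gamma_0$), and noting that the $2/(\delta T)$ error multiplied by $\sum_{|\mim s| < T}\mre s = O(T)$ (an $O(T)$ bound from Theorem~\ref{thm:Hinftyloc}) becomes negligible after division by $T$, this produces
\[
\frac{2\pi\gamma}{\gamma + \gamma_0}\sum_{|\mim s| \leq (1-\delta)T}\mre s \leq -\int_{-T}^T \log|f_\xi(\gamma + iy)|\,dy + o(T)
\]
as $T \to \infty$. Dividing by $2T$ and invoking Jessen's theorem (Theorem~\ref{thm:jessen}) to convert the right-hand side into $-\mathscr{J}_{f_\xi}(\gamma) + o(1)$, combined with the trivial identity $\limsup_T N_f(\xi, (1-\delta)T) = \limsup_T N_f(\xi, T)$, one arrives at
\[
\limsup_{T \to \infty} N_f(\xi, T) \leq \frac{(1 + \gamma_0/\gamma)\,(-\mathscr{J}_{f_\xi}(\gamma))}{1 - \delta}.
\]

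Letting $\delta \to 0^+$ and then $\gamma \to \infty$, with $\mathscr{J}_{f_\xi}(\gamma) \to \log|f_\xi(+\infty)|$ (from the uniform convergence of $f_\xi$ to $f_\xi(+\infty)$ at infinity) and $\gamma_0/\gamma \to 0$, completes the proof. The main delicacy is obtaining the sharp constant: the weight $2\gamma/(\gamma + \mre s)$ approaches $2$ as $\gamma \to \infty$, while $\frac{1}{2T}\int_{-T}^T \log|f_\xi(\gamma + iy)|\,dy$ approaches $\mathscr{J}_{f_\xi}(\gamma) \to \log|f_\xi(+\infty)|$, and these two factors of $2$ cancel exactly---but only if the limits are taken in the order $T \to \infty$, $\delta \to 0^+$, $\gamma \to \infty$.
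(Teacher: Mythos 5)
Your argument is correct, but it takes a genuinely different route from the paper. The paper obtains the bound as a quick byproduct of the machinery behind Theorem~\ref{thm:Nfswap}: it applies Littlewood's argument principle to the Blaschke factor $B_\xi$ of $f_\xi$, controls the horizontal argument integrals via Lemma~\ref{lem:blaschkeint} together with Theorem~\ref{thm:Hinftyloc}, and then concludes from $|B_\xi|\geq |f_\xi|$ and \eqref{eq:firstcoeff}. You instead bypass the canonical factorization and the argument principle entirely: you integrate the pointwise Jensen-type inequality $\sum_{s\in\mathscr{Z}_{f_\xi}} 2\mre{w}\,\mre{s}\,|s+\overline{w}|^{-2}\leq -\log|f_\xi(w)|$ from the proof of Theorem~\ref{thm:Hinftyloc} along the vertical segment $w=\gamma+iy$, $|y|\leq T$, lower-bounding the resulting harmonic-measure weights by $\pi/(\gamma+\mre{s})-2/(\delta T)$ for zeros with $|\mim{s}|\leq(1-\delta)T$, using the strong Blaschke condition \eqref{eq:bloc} only to absorb the error term, and then taking the limits in the order $T\to\infty$, $\delta\to 0^+$, $\gamma\to\infty$; the constant bookkeeping ($\gamma/(\gamma+\mre{s})\geq\gamma/(\gamma+\gamma_0)$, $\limsup_T N_f(\xi,(1-\delta)T)=\limsup_T N_f(\xi,T)$, and $\mathscr{J}_{f_\xi}(\gamma)\to\log|f_\xi(+\infty)|$) all checks out, and the appeal to Theorem~\ref{thm:jessen} could even be replaced by the uniform convergence of $f_\xi$ to $f_\xi(+\infty)$ as $\mre{s}\to\infty$. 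What each approach buys: yours is more elementary and self-contained, needing neither the inner--outer factorization nor Lemmas~\ref{lem:blaschkeint} and~\ref{lem:outer}, and it gives the bound for every $\xi\neq f(+\infty)$ directly; the paper's route is essentially free once Theorem~\ref{thm:Nfswap} is in place and, more importantly, that machinery also yields the sharper asymptotic identity relating $N_f(\xi,T)$ to the boundary integral of $\log|f_\xi|$, which your averaging argument cannot recover since it only produces a one-sided estimate.
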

\begin{proof}
	Since $f_\xi(s)=0$ if and only if $B_\xi(s)=0$, we can repeat the argument in the proof of Theorem~\ref{thm:Nfswap} to conclude that
	\[\lim_{T\to\infty} \left|N_f(\xi,T) + \frac{1}{2T} \int_{-T}^T \log|B_\xi(\gamma+it)|\,dt\right| =0\]
	where $\gamma>0$ is such that $f$ does not take the value $\xi$ in $\overline{\mathbb{C}_\gamma}$. Since $|B_\xi| \geq |f_\xi|$, we conclude by \eqref{eq:firstcoeff} that
	\[ \limsup_{T \to \infty} N_f(\xi,T) \leq -\liminf_{T\to\infty} \frac{1}{2T} \int_{-T}^T \log|f_\xi(\gamma+it)| = -\log|f_\xi(+\infty)|. \qedhere \]
\end{proof}

\section{Proof of Theorem~\ref{thm:largelim} and Theorem~\ref{thm:nolim}} \label{sec:proofproof} 
Via the Kronecker flow, the imaginary line embeds into $\mathbb{T}^2$ as the set
\[\mathscr{L} = \left\{(2^{-i\tau},3^{-i\tau}) \,:\, \tau \in \mathbb{R}\right\}.\]
It is useful to identify $\mathbb{T}^2$ with $\mathbb{R}/ 2\pi \times \mathbb{R}/2\pi$. In $\mathbb{R}/2\pi \times \mathbb{R}/2\pi$ we identify $\mathscr{L}$ with a countable set of line segments that are parallel to the line $(\log{2})y = (\log{3})x$. See Figure~\ref{fig:L}. In particular, $\mathscr{L}$ is a dense set of measure $0$ on $\mathbb{T}^2$.
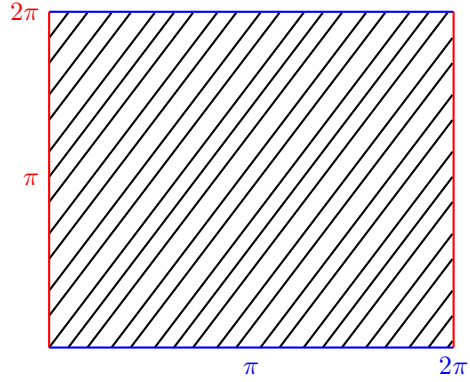
\begin{figure}
	\centering 
	\begin{tikzpicture}
		\begin{axis}
			[ xmin = -1, ymin = -1, hide axis, axis line style=thin, xtick={0,3.1415,6.2832}, xticklabels={$0$,$\pi$,$2\pi$}, ytick={0,3.1415,6.2832}, yticklabels={$0$,$\pi$,$2\pi$}, ]
			
			\addplot [color=black, thick] coordinates {(0,0) (3.964248557502618,6.283185307179586)}; \addplot [color=black, thick] coordinates {(3.964248557502618,0) (6.283185307179586,3.6754277897821974)}; \addplot [color=black, thick] coordinates {(0,3.6754277897821974) (1.645311807825649,6.283185307179586)}; \addplot [color=black, thick] coordinates {(1.645311807825649,0) (5.609560365328268,6.283185307179586)}; \addplot [color=black, thick] coordinates {(5.609560365328268,0) (6.283185307179586,1.0676702723848086)}; \addplot [color=black, thick] coordinates {(0,1.0676702723848086) (3.290623615651298,6.283185307179586)}; \addplot [color=black, thick] coordinates {(3.290623615651298,0) (6.283185307179586,4.743098062167007)}; \addplot [color=black, thick] coordinates {(0,4.743098062167007) (0.9716868659743294,6.283185307179586)}; \addplot [color=black, thick] coordinates {(0.9716868659743294,0) (4.93593542347695,6.283185307179586)}; \addplot [color=black, thick] coordinates {(4.93593542347695,0) (6.283185307179586,2.135340544769617)}; \addplot [color=black, thick] coordinates {(0,2.135340544769617) (2.6169986737999813,6.283185307179586)}; \addplot [color=black, thick] coordinates {(2.6169986737999813,0) (6.283185307179586,5.810768334551819)}; \addplot [color=black, thick] coordinates {(0,5.810768334551819) (0.29806192412300986,6.283185307179586)}; \addplot [color=black, thick] coordinates {(0.29806192412300986,0) (4.26231048162563,6.283185307179586)}; \addplot [color=black, thick] coordinates {(4.26231048162563,0) (6.283185307179586,3.2030108171544285)}; \addplot [color=black, thick] coordinates {(0,3.2030108171544285) (1.9433737319486588,6.283185307179586)}; \addplot [color=black, thick] coordinates {(1.9433737319486588,0) (5.90762228945128,6.283185307179586)}; \addplot [color=black, thick] coordinates {(5.90762228945128,0) (6.283185307179586,0.5952532997570492)}; \addplot [color=black, thick] coordinates {(0,0.5952532997570492) (3.5886855397743136,6.283185307179586)}; \addplot [color=black, thick] coordinates {(3.5886855397743136,0) (6.283185307179586,4.270681089539234)}; \addplot [color=black, thick] coordinates {(0,4.270681089539234) (1.269748790097342,6.283185307179586)}; \addplot [color=black, thick] coordinates {(1.269748790097342,0) (5.233997347599963,6.283185307179586)}; \addplot [color=black, thick] coordinates {(5.233997347599963,0) (6.283185307179586,1.6629235721418438)}; \addplot [color=black, thick] coordinates {(0,1.6629235721418438) (2.915060597922997,6.283185307179586)}; \addplot [color=black, thick] coordinates {(2.915060597922997,0) (6.283185307179586,5.338351361924051)}; \addplot [color=black, thick] coordinates {(0,5.338351361924051) (0.5961238482460197,6.283185307179586)}; \addplot [color=black, thick] coordinates {(0.5961238482460197,0) (4.56037240574864,6.283185307179586)}; \addplot [color=black, thick] coordinates {(4.56037240574864,0) (6.283185307179586,2.730593844526672)}; \addplot [color=black, thick] coordinates {(0,2.730593844526672) (2.2414356560716744,6.283185307179586)}; \addplot [color=black, thick] coordinates {(2.2414356560716744,0) (6.205684213574295,6.283185307179586)}; \addplot [color=black, thick] coordinates {(6.205684213574295,0) (6.283185307179586,0.12283632712927034)};
			
			\addplot[thick,-,color=blue] coordinates {(0,0) (6.3,0)} node[pos=0.5,below] {$\phantom{0}\pi\phantom{0}$} node[pos=1,below,] {$2\pi$}; \addplot[thick,-,color=blue] coordinates {(0,6.3) (6.3,6.3)};
			
			\addplot[thick,-,color=red] coordinates {(0,0) (0,6.3)} node[pos=0.5,left] {$\pi$} node[pos=1,left,] {$2\pi$}; \addplot[thick,-,color=red] coordinates {(6.3,0) (6.3,6.3)};
		\end{axis}
	\end{tikzpicture}
	\caption{Plot of $\tau \mapsto (2^{i\tau},3^{i\tau})$ for $0\leq \tau \leq \frac{24\pi}{\log{2}}$. Here we identify the {\color{blue}top} and {\color{blue}bottom} edges and the {\color{red}left} and {\color{red}right} edges of the square.} 
\label{fig:L} \end{figure}

The Saksman--Seip construction (essentially contained in \cite{SS2009}*{Lemma 3}) can be formulated as follows.
\begin{theorem}[Saksman--Seip] \label{thm:SSconstruct} 
	For every $0<\delta<1$ and every open set $U$ on $\mathbb{T}^2$, there is a function $f$ in $\mathscr{H}^\infty$ of the form $f(s)=F(2^{-s},3^{-s})$ such that 
	\begin{enumerate}
		\item[(i)] $|f^\ast(\chi)|=1$ for almost every $\chi$ in $U$; 
		\item[(ii)] $|f^\ast(\chi)|=\delta$ for almost every $\chi$ in $\mathbb{T}^2 \setminus U$; 
		\item[(iii)] $|f(i\tau)|=1$ for almost every $\tau$ such that $\mathfrak{p}^{-i\tau}$ is in $\mathscr{L}\cap U$; 
		\item[(iv)] $|f(i\tau)|=\delta$ for almost every $\tau$ such that $\mathfrak{p}^{-i\tau}$ is in $\mathscr{L}\cap \left(\mathbb{T}^2 \setminus \overline{U}\right)$. 
	\end{enumerate}
\end{theorem}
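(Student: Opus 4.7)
The plan is to realize $f$ as $f(s) = F(2^{-s}, 3^{-s})$ for some $F$ in $H^\infty(\mathbb{D}^2)$ chosen so that $|F^\ast| = 1$ almost everywhere on $U$ and $|F^\ast| = \delta$ almost everywhere on $\mathbb{T}^2 \setminus U$, where $F^\ast$ denotes the radial boundary values on $\mathbb{T}^2$. I would build $F$ as a bidisc outer function associated to the weight
\[
\rho = \mathbf{1}_U + \delta\,\mathbf{1}_{\mathbb{T}^2 \setminus U},
\]
which is the content of \cite{SS2009}*{Lemma~3}. Beyond the a.e.\ equality $|F^\ast| = \rho$, the construction is local in the sense that at any $(\chi_1^0, \chi_2^0) \in \mathbb{T}^2$ admitting an open neighborhood on which $\rho$ is constant, $|F|$ extends continuously to that boundary point with the corresponding constant value. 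Since $\rho \leq 1$, this automatically gives $\|f\|_{\mathscr{H}^\infty} = \|F\|_{H^\infty(\mathbb{D}^2)} \leq 1$.

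For (i) and (ii), observe that since $f(s) = F(2^{-s}, 3^{-s})$ the vertical limits are $f_\chi(s) = F(\chi_1 2^{-s}, \chi_2 3^{-s})$ with $\chi_j = \chi(p_j)$, so by Lemma~\ref{lem:chibv} we have $f^\ast(\chi) = F^\ast(\chi_1, \chi_2)$ for almost every $\chi$ in $\mathbb{T}^\infty$. Writing $\mathbb{T}^\infty \cong \mathbb{T}^2 \times \prod_{j\geq 3} \mathbb{T}$ and using Fubini with the product Haar measure, the identities $|F^\ast| = \rho$ a.e.\ on $\mathbb{T}^2$ transfer directly to the statements (i) and (ii).

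For (iii) and (iv), fix $\tau$ with $(2^{-i\tau}, 3^{-i\tau}) \in U$. Since $U$ is open, this point admits a neighborhood in $\mathbb{T}^2$ on which $\rho \equiv 1$, so by the local continuity property of $F$ the function $F$ extends continuously at $(2^{-i\tau}, 3^{-i\tau})$ with value of modulus $1$. In particular the radial limit
\[
f(i\tau) = \lim_{\sigma \to 0^+} F(2^{-\sigma - i\tau}, 3^{-\sigma - i\tau})
\]
exists and has modulus $1$. An entirely symmetric argument with $\mathbb{T}^2 \setminus \overline{U}$ in place of $U$ and the constant $\delta$ in place of $1$ gives (iv); the exclusion of $\partial U$ here is exactly what accounts for the ``almost every'' qualifier, since $\mathscr{L} \cap \partial U$ may be nonempty.

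The principal obstacle is the construction of $F$. Unlike in the one-variable case, the double Poisson extension of $\log \rho$ to $\mathbb{D}^2$ is not pluriharmonic for generic $\rho$, so the naive outer function formula fails to produce a holomorphic $F$ with the prescribed boundary modulus. Saksman and Seip's construction circumvents this via a specific integral representation adapted to the bidisc, which simultaneously delivers the a.e.\ identity $|F^\ast| = \rho$ and the local continuity of $F$ at points where $\rho$ is locally constant; both features are needed to derive (i)--(iv).
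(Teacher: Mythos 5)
The paper does not actually prove this statement: it is presented as a reformulation of \cite{SS2009}*{Lemma~3}, and your proposal ultimately defers to the same source, so what must be judged is the material you interpose. The transfer arguments are essentially sound: \emph{if} one has $F\in H^\infty(\mathbb{D}^2)$ with $|F^\ast|=1$ a.e.\ on $U$, $|F^\ast|=\delta$ a.e.\ on $\mathbb{T}^2\setminus U$, and with $|F|$ continuous up to the boundary at every boundary point possessing a neighborhood on which the prescribed modulus is constant, then (i)--(iv) follow roughly as you indicate. (For (i)--(ii) you should justify $f^\ast(\chi)=F^\ast(\chi_1,\chi_2)$ a.e.: Lemma~\ref{lem:chibv} gives convergence along the curve $(\chi_1 2^{-\sigma},\chi_2 3^{-\sigma})$, not the radial approach, so one needs a restricted nontangential Fatou theorem in the bidisc or a Fubini argument; this is a minor point.)

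The genuine gap is that the object carrying the entire theorem is never produced. You first describe $F$ as ``the bidisc outer function associated to $\rho$'' and then correctly retract this, since the two-variable Poisson extension of $\log\rho$ is not the real part of a holomorphic function for such $\rho$, replacing it with an unspecified ``integral representation adapted to the bidisc.'' In particular, the local continuity property is the only thing that can give (iii) and (iv): the Kronecker line $\mathscr{L}$ has measure zero in $\mathbb{T}^2$, so the a.e.\ identity $|F^\ast|=\rho$ carries no information whatsoever about the values $f(i\tau)$, and such a property is not a formal consequence of prescribing the boundary modulus almost everywhere---it has to be built into, and verified for, the specific construction. (The appearance of $U$ in (iii) but of $\mathbb{T}^2\setminus\overline{U}$ in (iv) is precisely the footprint of such a local argument, so you have identified the right auxiliary property; identifying it is not supplying it.) As written, your proposal restates the conclusion of \cite{SS2009}*{Lemma~3} together with a routine reduction, which is consistent with how the paper itself treats the result (by citation), but it does not constitute a proof of the construction, and its one concrete structural claim about that construction (outerness) is the one you yourself concede cannot hold naively.
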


We stress that in statements (iii) and (iv), we mean ``almost every'' with respect to the Lebesgue measure on $\mathbb{R}$. Note also that we here interpret $\mathfrak{p}^{-i\tau}$ as $(2^{-i\tau},3^{-i\tau})$.

The following elementary estimate will be used to control $\log|f_\xi|$. We omit the proof, which is similar to \cite{BP2021}*{Lemma~2.3}.
\begin{lemma}\label{lem:logxi} 
	If $z$ and $\xi$ are distinct point in $\mathbb{D}$, then
	\[-\frac{1}{2}\frac{(1-|\xi|^2)(1-|z|^2)}{|\xi-z|^2} \leq \log\left|\frac{\xi-z}{1-\overline{\xi}z}\right| \leq -\frac{1}{2}\frac{(1-|\xi|^2)(1-|z|^2)}{|1-\overline{\xi}z|^2}.\]
\end{lemma}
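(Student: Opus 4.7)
The plan is to reduce the two-sided estimate to the single elementary inequality $\log t \leq t - 1$ (valid for all $t>0$) by introducing the pseudohyperbolic distance
\[r = \left|\frac{\xi-z}{1-\overline{\xi}z}\right|,\]
which satisfies $0 < r < 1$ since $z$ and $\xi$ are distinct points of $\mathbb{D}$. The starting point is the algebraic identity
\[|1-\overline{\xi}z|^2 - |\xi-z|^2 = (1-|\xi|^2)(1-|z|^2),\]
verified by direct expansion. Dividing this identity by $|1-\overline{\xi}z|^2$ yields
\[1 - r^2 = \frac{(1-|\xi|^2)(1-|z|^2)}{|1-\overline{\xi}z|^2},\]
and dividing it instead by $|\xi-z|^2$ yields
\[\frac{1}{r^2} - 1 = \frac{(1-|\xi|^2)(1-|z|^2)}{|\xi-z|^2}.\]

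Substituting these two formulas into the right-hand and left-hand bounds of the claim respectively, the entire inequality collapses to
\[\frac{1}{2}\left(1 - \frac{1}{r^2}\right) \leq \log r \leq \frac{1}{2}(r^2 - 1).\]
Writing $\log r = \frac{1}{2}\log(r^2)$, the upper estimate is $\log(r^2) \leq r^2 - 1$, which is the elementary inequality $\log t \leq t-1$ at $t = r^2$. The lower estimate rearranges to $\log(1/r^2) \leq 1/r^2 - 1$, which is the same elementary inequality at $t = 1/r^2$. Both facts follow from concavity of $\log$ together with the tangent-line identity at $t=1$.

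No substantive obstacle is expected. The hypothesis $z \neq \xi$ is needed only to ensure $r > 0$ so that $\log r$ is finite, and the membership $z,\xi \in \mathbb{D}$ ensures $r < 1$ so that both sides of each inequality have the expected sign. Everything else is direct algebraic manipulation.
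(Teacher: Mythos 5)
Your proof is correct: the identity $|1-\overline{\xi}z|^2 - |\xi-z|^2 = (1-|\xi|^2)(1-|z|^2)$ and the elementary bound $\log t \leq t-1$ applied at $t=r^2$ and $t=1/r^2$ give exactly the two stated estimates. The paper omits its own proof (deferring to \cite{BP2021}*{Lemma~2.3}), and your argument is the same standard one that reference uses, so there is nothing further to add.
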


To prove Theorem~\ref{thm:largelim} we will use Theorem~\ref{thm:SSconstruct} in exactly the same way as Saksman and Seip used it in the proof of Theorem~\ref{thm:SS}~(a).
\begin{proof}
	[Proof of Theorem~\ref{thm:largelim}] Fix $0<\delta<1$ to be specified later and let $U_\delta$ be an open set on $\mathbb{T}^2$ that contains $\mathscr{L}$ and that satisfies $m_2(U_\delta) \leq \delta$. We will use the Dirichlet series constructed in Theorem~\ref{thm:SSconstruct} for $U=U_\delta$ multiplied by $5^{-s}$ to ensure that $f(+\infty)=0$. Note that the factor $5^{-s}$ has no effect on the modulus of the boundary values. Theorem~\ref{thm:SSconstruct}~(iii) guarantees that $f$ is a singular inner function (viewed as an element of $H^\infty(\mathbb{C}_0)$) and, in particular, that it maps $\mathbb{C}_0$ to $\mathbb{D}$.
	
	The first assertion \eqref{eq:largelim2} now follows at once from Theorem~\ref{thm:Nfswap}, since $f_\xi$ is inner for every $\xi$, so that $|f_\xi|=1$ almost everywhere on $i\mathbb{R}$.
	
	We continue with the second assertion \eqref{eq:largelim1}. From the definition of the mean counting function, Theorem~\ref{thm:jensen}, and Lemma~\ref{lem:jessenchi} (with $f(+\infty)=0$) we obtain
	\[\mathscr{M}_f(\xi) \leq \log{\frac{1}{|\xi|}} + \int_{\mathbb{T}^\infty} \log|f_\xi^\ast(\chi)|\,dm_\infty(\chi)\]
	for every $\xi \neq 0$. We use Theorem~\ref{thm:SSconstruct}~(i) to infer that
	\[\int_{\mathbb{T}^\infty} \log|f^\ast_\xi(\chi)|\,dm_\infty(\chi) = \int_{\mathbb{T}^2} \log|f^\ast_\xi(\chi)|\,dm_2(\chi) = \int_{\mathbb{T}^2 \setminus U_\delta} \log|f^\ast_\xi(\chi)|\,dm_2(\chi),\]
	since $|f^\ast_\xi(\chi)|=1$ if and only if $|f^\ast(\chi)|=1$. Using Theorem~\ref{thm:SSconstruct}~(ii), the upper bound in Lemma~\ref{lem:logxi}, and the estimate $|1-\overline{\xi} z|^2 \leq (1+|z|)^2$, we find that
	\[\log|f^\ast_\xi(\chi)| \leq -\frac{1}{2}\left(1-|f^\ast_\xi(\chi)|^2\right) \leq -\frac{1-\delta}{1+\delta} \frac{1-|\xi|^2}{2}\]
	for almost every $\chi$ in $\mathbb{T}^2 \setminus U_\delta$. Since $m_2(\mathbb{T} \setminus U_\delta) \geq 1 - \delta$, we obtain the stated upper bound \eqref{eq:largelim1} upon choosing $\delta$ such that
	\[1-\varepsilon = \frac{1-\delta}{1+\delta} (1-\delta). \qedhere\]
\end{proof}

In the proof of Theorem~\ref{thm:nolim} we will also need following version of the ergodic theorem \cite{QQ2020}*{Theorem~2.2.5}. If $U$ is an open set in $\mathbb{T}^\infty$ such that $m_\infty(\partial U) = 0$, then 
\begin{equation}\label{eq:ergodicopen} 
	\lim_{T \to \infty} \frac{1}{2T} \int_{-T}^T \mathbf{1}_U(\mathfrak{p}^{-i\tau})\,d\tau = m_\infty(U). 
\end{equation}

In the upcoming proof, we will use Theorem~\ref{thm:SSconstruct} similarly to how Saksman and Seip in the proof of Theorem~\ref{thm:SS}~(b).
\begin{figure}
	\centering 
	\begin{tikzpicture}
		\begin{axis}
			[ xmin = -1, ymin = -1, hide axis, axis line style=thin, xtick={0,3.1415,6.2832}, xticklabels={$0$,$\pi$,$2\pi$}, ytick={0,3.1415,6.2832}, yticklabels={$0$,$\pi$,$2\pi$}, ]
			
			\addplot [color=black, thick] coordinates {(0,0) (3.964248557502618,6.283185307179586)}; \addplot [color=black, thick] coordinates {(3.964248557502618,0) (6.283185307179586,3.6754277897821974)}; \addplot [color=black, thick] coordinates {(0,3.6754277897821974) (1.645311807825649,6.283185307179586)}; \addplot [color=black, thick] coordinates {(1.645311807825649,0) (5.609560365328268,6.283185307179586)}; \addplot [color=black, thick] coordinates {(5.609560365328268,0) (6.283185307179586,1.0676702723848086)}; \addplot [color=black, thick] coordinates {(0,1.0676702723848086) (3.290623615651298,6.283185307179586)}; \addplot [color=black, thick] coordinates {(3.290623615651298,0) (6.283185307179586,4.743098062167007)}; \addplot [color=black, thick] coordinates {(0,4.743098062167007) (0.9716868659743294,6.283185307179586)}; \addplot [color=black, thick] coordinates {(0.9716868659743294,0) (4.93593542347695,6.283185307179586)}; \addplot [color=black, thick] coordinates {(4.93593542347695,0) (6.283185307179586,2.135340544769617)}; \addplot [color=black, thick] coordinates {(0,2.135340544769617) (2.6169986737999813,6.283185307179586)}; \addplot [color=black, thick] coordinates {(2.6169986737999813,0) (6.283185307179586,5.810768334551819)}; \addplot [color=black, thick] coordinates {(0,5.810768334551819) (0.29806192412300986,6.283185307179586)}; \addplot [color=black, thick] coordinates {(0.29806192412300986,0) (4.26231048162563,6.283185307179586)}; \addplot [color=black, thick] coordinates {(4.26231048162563,0) (6.283185307179586,3.2030108171544285)}; \addplot [color=black, thick] coordinates {(0,3.2030108171544285) (1.9433737319486588,6.283185307179586)}; \addplot [color=black, thick] coordinates {(1.9433737319486588,0) (5.90762228945128,6.283185307179586)}; \addplot [color=black, thick] coordinates {(5.90762228945128,0) (6.283185307179586,0.5952532997570492)}; \addplot [color=black, thick] coordinates {(0,0.5952532997570492) (3.5886855397743136,6.283185307179586)}; \addplot [color=black, thick] coordinates {(3.5886855397743136,0) (6.283185307179586,4.270681089539234)}; \addplot [color=black, thick] coordinates {(0,4.270681089539234) (1.269748790097342,6.283185307179586)}; \addplot [color=black, thick] coordinates {(1.269748790097342,0) (5.233997347599963,6.283185307179586)}; \addplot [color=black, thick] coordinates {(5.233997347599963,0) (6.283185307179586,1.6629235721418438)}; \addplot [color=black, thick] coordinates {(0,1.6629235721418438) (2.915060597922997,6.283185307179586)}; \addplot [color=black, thick] coordinates {(2.915060597922997,0) (6.283185307179586,5.338351361924051)}; \addplot [color=black, thick] coordinates {(0,5.338351361924051) (0.5961238482460197,6.283185307179586)}; \addplot [color=black, thick] coordinates {(0.5961238482460197,0) (4.56037240574864,6.283185307179586)}; \addplot [color=black, thick] coordinates {(4.56037240574864,0) (6.283185307179586,2.730593844526672)}; \addplot [color=black, thick] coordinates {(0,2.730593844526672) (2.2414356560716744,6.283185307179586)}; \addplot [color=black, thick] coordinates {(2.2414356560716744,0) (6.205684213574295,6.283185307179586)}; \addplot [color=black, thick] coordinates {(6.205684213574295,0) (6.283185307179586,0.12283632712927034)};
			
			\addplot [color=red, thick] coordinates {(0,0) (3.964248557502618,6.283185307179586)}; \addplot [color=red, thick] coordinates {(3.964248557502618,0) (6.283185307179586,3.6754277897821974)}; \addplot [color=red, thick] coordinates {(0,3.6754277897821974) (1.645311807825649,6.283185307179586)}; \addplot [color=red, thick] coordinates {(1.645311807825649,0) (3.4208752206596476,2.814201426994364)}; \addplot [color=blue, thick, draw opacity=0, name path=t1] coordinates {(0,0) (3.258266985309866,6.283185307179586)}; \addplot [color=blue, thick, draw opacity=0, name path=t2] coordinates {(3.258266985309866,0) (3.6259906059690334,0.709111825656249)}; \addplot [color=blue, thick, draw opacity=0, name path=t3] coordinates {(3.6259906059690334,0.709111825656249) (4.796467479511955,2.966239043099107)}; \addplot [color=blue, thick, draw opacity=0, name path=t4] coordinates {(4.796467479511955,2.966239043099107) (5.457052434733744,4.240099565734793)}; \addplot [color=blue, thick, draw opacity=0, name path=b1] coordinates {(0,0) (4.2682026588138875,5.645986841572669)}; \addplot [color=blue, thick, draw opacity=0, name path=b2] coordinates {(4.2682026588138875,5.645986841572669) (4.749906258452106,6.283185307179586)}; \addplot [color=blue, thick, draw opacity=0, name path=b3] coordinates {(4.749906258452106,0) (6.283185307179586,2.028224530458475)}; \addplot [color=blue, thick, draw opacity=0, name path=b4] coordinates {(0,2.028224530458475) (0.8653405245678334,3.172898672184278)}; \addplot [color=blue, thick, draw opacity=0, name path=t5] coordinates {(5.457052434733744,4.240099565734793) (6.283185307179586,5.184944622015994)}; \addplot [color=blue, thick, draw opacity=0, name path=t6] coordinates {(0,5.184944622015994) (0.960255574011347,6.283185307179586)}; \addplot [color=blue, thick, draw opacity=0, name path=t7] coordinates {(0.960255574011347,0) (1.403496484975241,0.5069329613115787)}; \addplot [color=blue, thick, draw opacity=0, name path=t8] coordinates {(1.403496484975241,0.5069329613115787) (3.4208752206596476,2.814201426994366)}; \addplot [color=blue, thick, draw opacity=0, name path=b5] coordinates {(0.8653405245678334,3.172898672184278) (1.3624460569412333,4.325336867055657)}; \addplot [color=blue, thick, draw opacity=0, name path=b6] coordinates {(1.3624460569412333,4.325336867055657) (1.9402566903811083,5.664873444328659)}; \addplot [color=blue, thick, draw opacity=0, name path=b7] coordinates {(1.9402566903811083,5.664873444328659) (2.206966219033622,6.283185307179586)}; \addplot [color=blue, thick, draw opacity=0, name path=b8] coordinates {(2.206966219033622,0) (3.420875220659648,2.814201426994364)};
			
			\addplot[thick,-,color=blue,name path=bottom] coordinates {(0,0) (6.3,0)} node[pos=0.5,below] {$\phantom{0}\pi\phantom{0}$} node[pos=1,below,] {$2\pi$}; \addplot[thick,-,color=blue,name path=top] coordinates {(0,6.3) (6.3,6.3)};
			
			\addplot[thick,-,color=red,name path=left] coordinates {(0,0) (0,6.3)} node[pos=0.5,left] {$\pi$} node[pos=1,left,] {$2\pi$}; \addplot[thick,-,color=red,name path=right] coordinates {(6.3,0) (6.3,6.3)};
			
			\addplot[blue!50] fill between[of=t1 and b1]; \addplot[blue!50] fill between[of=t1 and b2]; \addplot[blue!50] fill between[of=t2 and b3]; \addplot[blue!50] fill between[of=t3 and b3]; \addplot[blue!50] fill between[of=t4 and b3]; \addplot[blue!50] fill between[of=t5 and b3]; \addplot[blue!50] fill between[of=t6 and b4]; \addplot[blue!50] fill between[of=t6 and b5]; \addplot[blue!50] fill between[of=t6 and b6]; \addplot[blue!50] fill between[of=t6 and b7]; \addplot[blue!50] fill between[of=t7 and b8];
		\end{axis}
	\end{tikzpicture}
	\caption{Plot of $\tau \mapsto (2^{i\tau},3^{i\tau})$ for $0\leq \tau \leq \frac{24\pi}{\log{2}}$, the {\color{red}segment} $\{(2^{i\tau},3^{i\tau})\,:\, 0 < \tau < 14\}$, and an open {\color{blue}{parallelogram}} containing the segment. Here we identify the {\color{blue}top} and {\color{blue}bottom} edges and the {\color{red}left} and {\color{red}right} edges of the square.} 
\label{fig:para} \end{figure}
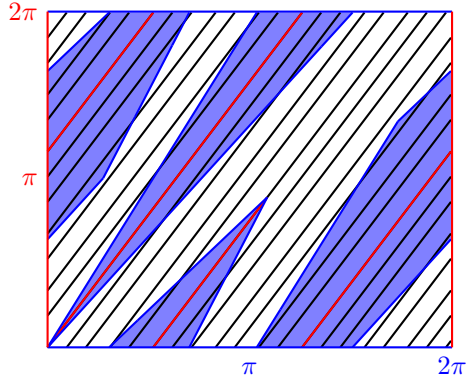
\begin{proof}
	[Proof of Theorem~\ref{thm:nolim}] Set
	\[\mathscr{L}_n = \left\{(2^{-i\tau},3^{-i\tau})\,:\, -n < \tau < n\right\}\]
	for $n=1,2,3,\ldots$ and let $U_n$ be an open set in $\mathbb{T}^2$ that satisfies 
	\begin{enumerate}
		\item[(i)] $\mathscr{L}_n$ is contained in $U_n$; 
		\item[(ii)] $m_2(U_n) \leq \delta 2^{-n-1}$; 
		\item[(iii)] $\partial U_n$ has countable intersection with $\mathscr{L}$ and $m_2(\partial U_n) = 0$. 
	\end{enumerate}
	To construct such a set, we could for example consider a parallelogram in $\mathbb{R}^2$ with one diagonal coinciding with the segment of the line $(\log 2)y = (\log 3)x$ corresponding to $\mathscr{L}_n$, and with the other diagonal as a suitably small perpendicular segment. The set $U_n$ is then obtained as the projection onto $\mathbb{R}/2\pi \times \mathbb{R}/2\pi$. See Figure~\ref{fig:para}. Properties (i) and (ii) then hold by construction, and (iii) holds since $\partial U_n$ consists of a finite number of line segments, none of which are parallel with $\mathscr{L}$. Note that while we have stipulated that $\mathscr{L}_n$ is contained in $U_n$, the open set $U_n$ will also intersect $\mathscr{L}$ in many other segments, since $\mathscr{L}$ is dense in $\mathbb{T}^2$.
	
	We next define $V_n = U_1 \cup U_2 \cup \cdots \cup U_n$. Clearly $m_2(V_n) \leq \delta/2$ and $m_2(\partial V_n) = 0$. For an increasing sequence $(n_k)_{k\geq1}$ of integers we set $W_k = V_{n_{k+1}} \setminus \overline{V_{n_k}}$. Then the properties of $U_n$ ensure that 
	\begin{enumerate}
		\item[(i)] $(W_k)_{k\geq1}$ is a disjoint sequence of open sets; 
		\item[(ii)] $\mathscr{L} \setminus \bigcup_{k\geq1} W_k$ is countable; 
		\item[(iii)] all but a countable subset of $\mathscr{L}_{n_{k+1}}$ is contained in either $W_k$ or in $V_{n_k}$. 
	\end{enumerate}
	We choose $(n_k)_{k\geq1}$ as follows. We let $n_1 =1$ and given $n_k$, we pick $n_{k+1} \geq 2 n_k$ such that 
	\begin{equation}\label{eq:ergodicuse} 
		\frac{1}{2 n_{k+1}} \int_{-n_{k+1}}^{n_{k+1}} \mathbf{1}_{V_{n_k}}(\mathfrak{p}^{-i\tau})\,d\tau \leq 2 m_2(V_{n_k}) \leq \delta, 
	\end{equation}
	which is always possible by \eqref{eq:ergodicopen}. The assumption $n_{k+1} \geq 2n_k$ is only included to ensure that $n_k \to \infty$ as $k\to\infty$.
	
	Let
	\[U = \bigcup_{k=1}^\infty W_{2k-1} \qquad \text{and} \qquad \widetilde{U} = \bigcup_{k=1}^\infty W_{2k} \subseteq \mathbb{T}^2 \setminus \overline{U}.\]
	It follows from this construction and Theorem~\ref{thm:SSconstruct} that there is a function $f$ in $\mathscr{H}^\infty$ with $f(+\infty)=0$ such that $|f(i\tau)|=1$ for almost every $\tau$ such that $\mathfrak{p}^{-i\tau}$ is in $U$ and $|f(i\tau)|=\varepsilon/2$ for almost every $\tau$ such that $\mathfrak{p}^{-i\tau}$ is in $\widetilde{U}$, and, furthermore, almost every $\tau$ falls within one of these cases.
	
	In preparation for the needed estimates, note that if $0<\varepsilon<|\xi|<1$ and $|f(i\tau)|=\varepsilon/2$, then from Lemma~\ref{lem:logxi} we have that 
	\begin{equation}\label{eq:xieps} 
		-c \frac{1-|\xi|^2}{2} \leq \log|f_\xi(i\tau)| \leq - C \frac{1-|\xi|^2}{2}, 
	\end{equation}
	for $c=(2/\varepsilon)^2-1$ and $C=(2-\varepsilon)/(2+\varepsilon)$. The lower bound also holds when $|f(i\tau)|=1$. 
	
	Since $\mathbf{1}_{W_{2k}}(\mathfrak{p}^{-i\tau}) + \mathbf{1}_{V_{n_{2k}}}(\mathfrak{p}^{-i\tau}) = 1$ for almost every $\tau$ in $(-n_{2k+1}, n_{2k+1})$, we have by \eqref{eq:ergodicuse} that
	\[ \frac{1}{2 n_{2k+1}} \int_{-n_{2k+1}}^{n_{2k+1}} \mathbf{1}_{W_{2k}}(\mathfrak{p}^{-i\tau})\,d\tau \geq 1- \delta, \]
	By the upper bound in \eqref{eq:xieps} we thus infer that 
	\begin{multline*}
		\frac{1}{2 n_{2k+1}} \int_{-n_{2k+1}}^{n_{2k+1}} \log|f_\xi(i\tau)| d\tau \\
		\leq \frac{1}{2 n_{2k+1}} \int_{-n_{2k+1}}^{n_{2k+1}} \mathbf{1}_{W_{2k}}(\mathfrak{p}^{-i\tau}) \log|f_\xi(i\tau)| \,d\tau \leq -(1-\delta) C\frac{1-|\xi|^2}{2}. 
	\end{multline*}
	
	Similarly, since $\mathbf{1}_{W_{2k-1}}(\mathfrak{p}^{-i\tau}) + \mathbf{1}_{V_{n_{2k-1}}}(\mathfrak{p}^{-i\tau}) = 1$ for almost every $\tau$ in $(-n_{2k}, n_{2k})$, we have that 
	\begin{multline*}
		\frac{1}{2 n_{2k}} \int_{-n_{2k}}^{n_{2k}} \log|f_\xi(i\tau)| d\tau \\
		= \frac{1}{2 n_{2k}} \int_{-n_{2k}}^{n_{2k}} \mathbf{1}_{V_{n_{2k-1}}}(\mathfrak{p}^{-i\tau}) \log|f_\xi(i\tau)| \, d\tau \geq - \delta c \frac{1-|\xi|^2}{2}, 
	\end{multline*}
	where we in the final estimate used \eqref{eq:ergodicuse}, and the lower bound in \eqref{eq:xieps}, which holds for almost every $\tau$.
	
	If $\delta$ is so small that $(1-\delta) C> c\delta$, then it follows from what we have done and Theorem~\ref{thm:Nfswap} that
	\[\limsup_{T\to\infty} N_f(\xi,T) \geq - \delta c \frac{1-|\xi|^2}{2} > -(1-\delta) C\frac{1-|\xi|^2}{2} \geq \liminf_{T\to\infty} N_f(\xi,T)\]
	for quasi-every $\xi$ with $\varepsilon<|\xi|<1$. 
\end{proof}

In comparison with Theorem~\ref{thm:Hpchiswap} and Theorem~\ref{thm:LPchiswap}, let us mention the following result (see \cite{KP2023}*{Theorem~4.9} for the proof of the second assertion).
\begin{theorem}\label{thm:mcfchiswap} 
	If $f$ is in $\mathscr{H}^\infty$, then $\mathscr{M}_f(\xi) = \mathscr{M}_{f_\chi}(\xi)$ for every $\chi$ on $\mathbb{T}^\infty$. Moreover,
	\[\mathscr{M}_f(\xi) = \lim_{T\to\infty}\frac{\pi}{T} \sum_{\substack{s \in f_\chi^{-1}(\{\xi\}) \\
	|\mim{s}| < T \\
	0<\mre{s}<\infty}}\mre{s}\]
	for almost every $\chi$ on $\mathbb{T}^\infty$. 
\end{theorem}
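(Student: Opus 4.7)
The plan is to pivot on the Jensen-formula identification
\[\mathscr{M}_f(\xi) = \lim_{\sigma_0\to 0^+}\mathscr{J}_{f_\xi}(\sigma_0) - \log|f_\xi(+\infty)|,\]
obtained by applying Theorem~\ref{thm:jensen} to the Frostman shift $f_\xi$, the limit being finite by Lemma~\ref{lem:jessenchi}. For the first assertion, the crucial observation is that Frostman shifts commute with vertical translations and hence with vertical limits, so $(f_\xi)_\chi = (f_\chi)_\xi$ for every character $\chi$. Consequently $(f_\chi)_\xi$ is a vertical limit of $f_\xi$, and the vertical-limit invariance guaranteed by Theorem~\ref{thm:jessen} forces $\mathscr{J}_{(f_\chi)_\xi} = \mathscr{J}_{f_\xi}$ on $(0,\infty)$. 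Since $(f_\chi)_\xi(+\infty) = f_\xi(+\infty)$ (because $\chi(1)=1$), applying the same Jensen-formula identification to $f_\chi$ in place of $f$ yields $\mathscr{M}_{f_\chi}(\xi) = \mathscr{M}_f(\xi)$ for every $\chi$.

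For the second assertion, I would after normalizing so that $f$ maps $\mathbb{C}_0$ into $\mathbb{D}$ apply Theorem~\ref{thm:Nfswap} to the Dirichlet series $f_\chi$. For every $\chi$ such that $(f_\xi)_\chi$ lacks a singular inner factor, this produces
\[\lim_{T\to\infty}\left|N_{f_\chi}(\xi,T) - \frac{1}{2T}\int_{-T}^T \log|(f_\xi)_\chi(i\tau)|\,d\tau - \log\left|\frac{1-\overline{\xi}f(+\infty)}{\xi-f(+\infty)}\right|\right| = 0,\]
using $f_\chi(+\infty) = f(+\infty)$. Next I would invoke the pointwise ergodic theorem for the Kronecker flow, applied to the $L^1(\mathbb{T}^\infty)$-observable $\chi' \mapsto \log|(f_\xi)^\ast(\chi')|$ (which lies in $L^1$ by Lemma~\ref{lem:jessenchi}), to obtain for almost every $\chi$ the identity
\[\lim_{T\to\infty}\frac{1}{2T}\int_{-T}^T \log|(f_\xi)_\chi(i\tau)|\,d\tau = \int_{\mathbb{T}^\infty}\log|(f_\xi)^\ast(\chi')|\,dm_\infty(\chi').\]
For quasi-every $\xi$, Rudin's theorem ensures that $f_\xi$ has no singular inner factor, in which case the inequality in Lemma~\ref{lem:jessenchi} becomes an equality and the right-hand side above coincides with $\lim_{\sigma_0\to 0^+}\mathscr{J}_{f_\xi}(\sigma_0)$. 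Combining with $\log|f_\xi(+\infty)| = -\log\bigl|(1-\overline{\xi}f(+\infty))/(\xi-f(+\infty))\bigr|$ assembles everything into $\mathscr{M}_f(\xi)$ exactly.

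The main technical obstacle is the ergodic-theoretic step, since $\log|\cdot|$ is not continuous at zero and so Lemma~\ref{lem:limswapchi2} does not apply verbatim. I would handle this by truncation: approximate $\log|z|$ from below by the continuous functions $z \mapsto \max(\log|z|,-M)$, apply Lemma~\ref{lem:limswapchi2} to each approximant, and let $M\to\infty$, exploiting the uniform-in-$\chi$ local integrability of Corollary~\ref{cor:suplogint} (whose constants depend only on $f_\xi$, not on $\chi$) to dominate the tails. A secondary subtlety is that Rudin's theorem applied to $f_\chi$ produces an exceptional set in $\xi$ that a priori depends on $\chi$; one must verify that, for a fixed good $\xi$, the shift $(f_\xi)_\chi$ inherits the absence of the singular inner factor from $f_\xi$ for almost every $\chi$, which should follow from a Fubini-type argument comparing the canonical factorizations of $f_\xi$ across vertical limits.
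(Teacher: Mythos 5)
Your treatment of the first assertion is correct and matches the reasoning the paper intends: writing $\mathscr{M}_f(\xi)=\lim_{\sigma_0\to0^+}\mathscr{J}_{f_\xi}(\sigma_0)-\log|f_\xi(+\infty)|$ via Theorem~\ref{thm:jensen}, noting $(f_\xi)_\chi=(f_\chi)_\xi$ and invoking the vertical-limit invariance of the Jessen function from Theorem~\ref{thm:jessen}, does give $\mathscr{M}_{f_\chi}(\xi)=\mathscr{M}_f(\xi)$ for every $\chi$ (for general $f$ you can dispense with the normalization by working with $f-\xi$ instead of the Frostman shift). Bear in mind that the paper itself does not prove the second assertion; it is quoted from \cite{KP2023}*{Theorem~4.9}, so your proposal is an independent attempt rather than a reconstruction.

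For the second assertion there is a genuine gap at the pivotal step, namely the claim that if $f_\xi$ has no singular inner factor in $H^\infty(\mathbb{C}_0)$, then Lemma~\ref{lem:jessenchi} holds with equality, $\lim_{\sigma_0\to0^+}\mathscr{J}_{f_\xi}(\sigma_0)=\int_{\mathbb{T}^\infty}\log|f_\xi^\ast|\,dm_\infty$. You offer no argument for this, and none is available in the paper. The deficit in Lemma~\ref{lem:jessenchi} is a statement about the whole family of vertical limits, not about the single slice $i\mathbb{R}$: since $\mathscr{J}_{f_\xi}(\sigma)=\int_{\mathbb{T}^\infty}\log|(f_\xi)_\chi(\sigma)|\,dm_\infty(\chi)$, inserting the canonical factorization of each $(f_\xi)_\chi$ and using the flow invariance of $m_\infty$ shows that the gap equals $-\lim_{\sigma\to0^+}\int_{\mathbb{T}^\infty}\bigl(\log|B_\chi(\sigma)|+\log|S_\chi(\sigma)|\bigr)\,dm_\infty(\chi)$, where $B_\chi$ and $S_\chi$ are the Blaschke and singular factors of $(f_\xi)_\chi$. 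The inner--outer structure of $f_\xi$ itself concerns only the trivial character, a null set for this integral, and the Saksman--Seip examples in Section~\ref{sec:proofproof} show exactly how unrepresentative the line $i\mathbb{R}$ is of the torus average; the one-variable intuition ``deficit equals singular mass'' does not transfer. In fact, granting your (correct) use of Theorem~\ref{thm:Nfswap} for $f_\chi$ and of the $L^1$ ergodic theorem, the desired identity $\lim_{T\to\infty}N_{f_\chi}(\xi,T)=\mathscr{M}_f(\xi)$ is equivalent to the asserted equality, so this step is essentially the theorem itself.

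Two further points. The ``secondary subtlety'' you flag is also a real gap: applying Theorem~\ref{thm:Nfswap} to $f_\chi$ at a fixed $\xi$ requires $(f_\xi)_\chi$ to lack a singular inner factor for almost every $\chi$, which does not follow from $f_\xi$ lacking one (canonical factorization is not stable under vertical limits), and exceptional sets of capacity zero in $\xi$ do not combine with null sets in $\chi$ by any routine Fubini argument. Moreover, even if both gaps were repaired, your route only yields the second identity for quasi-every $\xi$, a restriction absent from the statement. By contrast, your worry about the ergodic step is a non-issue: the paper records the pointwise ergodic theorem for merely integrable $F$ just before Lemma~\ref{lem:limswapchi2}, and it applies directly to $\log|f_\xi^\ast|\in L^1(\mathbb{T}^\infty)$, so no truncation is needed.
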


\bibliography{apbv}

\end{document}